\newtheorem{theorem}{Theorem}[section]
\newtheorem{lemma}[theorem]{Lemma}
\newtheorem{corollary}[theorem]{Corollary}
\theoremstyle{definition}
\newtheorem{definition}[theorem]{Definition}
\newtheorem{example}[theorem]{Example}
\newtheorem*{algorithm}{Stabilized LF-LTS Galerkin FE Algorithm}
\theoremstyle{remark}
\newtheorem{remark}[theorem]{Remark}
\numberwithin{equation}{section}
\begin{document}

\title[Stabilized leapfrog based local time-stepping]{Stabilized leapfrog based local time-stepping method for the wave equation}

\author{Marcus J. Grote}
\address{Department of Mathematics and Computer Science, University of Basel, Spiegelgasse 1, 4051 Basel, Switzerland}
\email{marcus.grote@unibas.ch}
\thanks{}

\author{Simon Michel}
\address{Department of Mathematics and Computer Science, University of Basel, Spiegelgasse 1, 4051 Basel, Switzerland}
\email{simon.michel@unibas.ch}
\thanks{}

\author{Stefan A. Sauter}
\address{Institute for Mathematics, University of Zurich, Winterthurerstrasse 190, 8057 Zurich, Switzerland}
\email{stas@math.uzh.ch}
\thanks{}

\subjclass[2010]{Primary 65M12, 65M20, 65M60, 65L06, 65L20}

\keywords{Wave propagation, finite element methods, explicit time integration, leap-frog method, convergence theory, damped Chebyshev polynomials}

\date{}

\begin{abstract}
Local time-stepping methods permit to overcome the severe stability constraint on explicit methods
caused by local mesh refinement without sacrificing explicitness. In \cite{DiazGrote09},
a leapfrog based explicit local time-stepping (LF-LTS) method was 
proposed for the time integration of second-order wave equations. Recently, optimal convergence rates 
were proved for a conforming FEM discretization, albeit under a CFL stability condition
where the global time-step, $\Delta t$, depends on the smallest elements in the mesh \cite{grote_sauter_1}. 
In general one cannot improve upon that stability constraint, 
as the LF-LTS method may become unstable at certain
discrete values of $\Delta t$. To remove those critical values of $\Delta t$,
we apply a slight modification (as in recent work on LF-Chebyshev methods \cite{CarHocStu19}) to the original LF-LTS method which nonetheless
preserves its desirable properties: it is fully explicit, second-order accurate, satisfies a three-term (leapfrog like) recurrence relation, and conserves the energy. The new stabilized LF-LTS method 
also yields optimal convergence rates
for a standard conforming FE discretization, yet under a CFL condition where $\Delta t$ no longer depends on the mesh size inside the locally refined region. 
\end{abstract}

\maketitle

\section{Introduction\label{SecIntro}}
For the time integration of second-order wave equations,
the leapfrog (LF) method \cite{HLWActa} probably remains to this day the most popular numerical method. Based on a centered finite difference approximation of the second-order time derivative, 
it is second-order accurate, explicit, time-reversible, symplectic and, for linear problems, conserves 
(a discrete version of) the energy for all time \cite{HLW}. 
For the spatial discretization of partial differential equations, finite element methods (FEM) provide a flexible approach, which easily
accomodates a varying mesh size or polynomial degree. Not only do FEM permit the use of high-order polynomials, necessary to capture the oscillatory nature of wave phenomena and keep numerical dispersion (``pollution error'') minimal \cite{BabuskaSauter97}, but they 
are also apt at locally resolving small geometric features or material interfaces. Hence the combined
FEM and LF based numerical discretization of second-order wave equations has proved a versatile and highly efficient approach, be it in acoustics, electromagnetics or elasticity.

Local mesh refinement, however, can cause a severe bottleneck 
for the LF method, or any other standard explicit scheme, due to the stringent CFL stability condition
on the time-step imposed by the smallest element in the mesh. Even when the locally refined
region consists of a few small elements only, those elements will dictate
a tiny time-step throughout the entire computational domain for stability.
To overcome the crippling effect of a few small elements, without sacrificing its
high efficiency or explicitness elsewhere, various local time integration strategies were proposed in
recent years which use smaller time-steps or an alternative method inside the regions of local mesh refinement. Hence, inside the ``coarse'' part of the mesh, where most of the elements reside, the standard LF method is used as is. Inside the ``fine'' part, however, where the remaining smaller elements are located, the time
integration is either implicit or explicit.

Following the classical IMEX (implicit-explicit) approach, a first locally implicit method for a DG discretization of Maxwell's equations was proposed by Piperno \cite{Piperno}; it combines the LF (or St\o rmer-Verlet) method in the coarse part with the Crank-Nicolson (CN) method in the fine part of the mesh and was further analyzed in Dolean et al. \cite{DFFL10}.
Independently, Verwer proposed a similar second-order CNLF method for Maxwell's equations \cite{Ver2011}.
From a comparison of those two schemes, Descombes et al. ``advise the use of the IMEX method from \cite{Ver2011}'' to avoid any order reduction \cite{DLM13}.
Hochbruck and Sturm proved optimal error estimates for the CNLF scheme from  \cite{Ver2011} when combined with a centered or an upwind discontinuous Galerkin (DG) FE discretization of Maxwell's equations \cite{HS16,HS19}. 
In \cite{ChabassierImperiale2015}, Chabassier and Imperiale proposed fourth-order energy-preserving IMEX schemes for the wave equation. Here, the computational domain is divided by a fixed artificial boundary
 into a coarse and fine region with a Lagrange multiplier along the interface.

Even earlier,  Collino et al. proposed a LF based local time-stepping (LTS) method for the wave equation, 
which also employs the explicit LF method in the fine part yet with a smaller step-size~\cite{ColFouJol3,ColFouJol1}; error estimates were derived
in~\cite{ColFouJol2, JolyRodriguez} for the one-dimensional wave equation. Stability of 
this second-order method, however, is guaranteed by enforcing the conservation of a discrete energy which requires at every time-step the solution of a linear
system for the shared unknowns at the interface; hence, the overall scheme is not fully explicit. 

A fully explicit second-order LTS method based on St\o rmer-Verlet and a DG space discretization was also proposed for Maxwell's equations by Piperno \cite{Piperno}, which was rewritten in a somewhat more
efficient time-staggered form by Montseny et al. \cite{MPFC08}. In \cite{DiazGrote09}, Diaz and Grote proposed LF based local time-stepping (LTS)
methods of arbitrarily high accuracy for second order wave equations. Hence, when combined with a mass-lumped conforming \cite{COHEN,mass_lumping_2d} or discontinuous Galerkin FE discretization \cite{GSS06} in space, the resulting method is truly explicit and inherently parallel; it was successfully applied to 3D seismic wave propagation \cite{MZKM13}. A multilevel version was later proposed \cite{DiazGrote15} and achieved high parallel efficiency on an HPC architecture \cite{Rietmann2017}.

Optimal convergence rates for the LF-LTS method from \cite{DiazGrote09} were recently 
derived for a conforming FEM discretization, albeit under a CFL condition
where the global time-step $\Delta t$ in fact depends on the smallest elements in the mesh \cite{grote_sauter_1}. 
In doing so, the inner loop over the $p$ local LF steps of size $\Delta t / p$ was rewritten
in terms of a single global time-step $\Delta t$, which involves Chebyshev polynomials.
In general one cannot improve upon the stability constraint on 
$\Delta t$, as the LF-LTS method may become unstable at certain
discrete values of $\Delta t$. Although those instabilities only matter in special situations and for long time simulation, they nonetheless
thwart any attempt to guarantee that the numerical solution remains bounded for all time independently of $p$, that is under a CFL condition imposed by the coarse mesh only. In fact, similar instabilities can also occur in 
leapfrog-Chebyshev (LFC, discrete Gautschi-type) methods without added stabilization \cite{GilbertJoly,CarHocStu19}, which are closely related to LTS schemes. 
As a remedy, Carle, Hochbruck and Sturm \cite{CarHocStu19} 
introduced an important class of stabilized LFC methods based on 
stabilized Chebyshev polynomials together with a special starting value. 

The reformulation of the original LF-LTS method \cite{DiazGrote09} using Chebyshev polynomials in \cite{grote_sauter_1} is key for its subsequent stabilization.
More concretely, by replacing the Chebyshev polynomials by their stabilized analogues, as in \cite{CarHocStu19} for LFC methods,  
we shall devise a stabilized version of the original LF-LTS method \cite{DiazGrote09}, which
completely removes the potentially unstable behavior at discrete time-steps while preserving
all the desirable properties of the original method: it is fully explicit, proceeds by a three-term recurrence relation 
and conserves (a discrete version of) the energy; hence, the (leapfrog-like) structure
of the resulting algorithm remains unchanged. The same stabilized LF-LTS algorithm
was developed independently and in parallel by the group of M. Hochbruck \cite{CarHoc}.
Here we develop a convergence analysis for the fully discrete stabilized LF-LTS method
under a CFL condition where $\Delta t$
no longer depends on the mesh size inside the locally refined region. 

The rest of our paper is structured as follows. In Section 2, we first introduce the notation underlying a conforming Galerkin FE discretization of the wave equation together with mass-lumping. Next, we present the
Galerkin FE formulation of the new stabilized LF-LTS method, where the coefficients in the inner
loop are replaced by those from stabilized Chebyshev polynomials. 
In Section 3, we prove continuity, symmetry and coercivity of the resulting stabilized bilinear form and introduce a new CFL stability condition, which no longer depends on the mesh size inside the locally refined region. 
Then, we derive an error equation and prove optimal error estimates in the $L^2$-norm.
Finally, in Section 4, we illustrate the optimal stability
and convergence properties of the stabilized LF-LTS method and also compare it to 
the original LF-LTS approach from \cite{DiazGrote09}.

We note that the idea of replacing standard Chebyshev polynomials by their stabilized version 
for added stability is well-known in the parabolic 
context  \cite{HundsVerwSommRKC,Verw96}. It was also recently used
to stabilize LFC methods \cite{CarHocStu19} and a Lagrange multiplier based
LTS approach \cite{ChabassierImperiale2019}.

\section{Galerkin Discretization with leapfrog Based Local
Time-Stepping\label{SecGalDisc}}

\subsection{The Wave Equation}

Let $\Omega\subset\mathbb{R}^{d}$ be a bounded Lipschitz domain with boundary
$\Gamma=\partial\Omega$ and assume there exists a partition $\Gamma=\Gamma
_{D}\cup\Gamma_{N}$, $\Gamma_{D}\cap\Gamma_{N}=\emptyset$, where $\left\vert
\Gamma_{D}\right\vert >0$. For $1\leq q\leq\infty$, let $L^{q}\left(
\Omega\right)  $ be the standard Lebesgue space with norm $\left\Vert
\cdot\right\Vert _{L^{q}\left(  \Omega\right)  }$. For $q=2$, the scalar
product in $L^{2}\left(  \Omega\right)  $ is denoted by $\left(  \cdot
,\cdot\right)  $ and the norm by $\left\Vert \cdot\right\Vert
:=\left\Vert \cdot\right\Vert _{L^{2}\left(  \Omega\right)  }$. For
$k\in\mathbb{N}_{0}$ and $1\leq q\leq\infty$, let $W^{k,q}\left(
\Omega\right)  $ denote the classical Sobolev spaces equipped with the norm
$\left\Vert \cdot\right\Vert _{W^{k,q}\left(  \Omega\right)  }$. For $q=2$,
these spaces are Hilbert spaces and denoted by $H^{k}\left(  \Omega\right)
:=W^{k,2}\left(  \Omega\right)  $ with scalar product $\left(  \cdot
,\cdot\right)  _{H^{k}\left(  \Omega\right)  }$ and norm $\left\Vert
\cdot\right\Vert _{H^{k}\left(  \Omega\right)  }:=\left(  \cdot,\cdot\right)
_{H^{k}\left(  \Omega\right)  }^{1/2}:=\left\Vert \cdot\right\Vert
_{W^{k,2}\left(  \Omega\right)  }$. 
The space of Sobolev functions in $H^{1}\left(  \Omega\right)  $
which vanish on the Dirichlet part of the boundary $\partial\Omega$ is denoted
by $H_{D}^{1}\left(  \Omega\right)  :=\left\{  w\in H^{1}\left(
\Omega\right)  :\left.  w\right\vert _{\Gamma_{D}}=0\right\}  $. For
$\Gamma_{D}=\Gamma$, we use the shorthand $H_{0}^{1}\left(  \Omega\right)  $.
Throughout this paper we restrict ourselves to
function spaces over the field of real numbers.

Let $V\subset H^{1}\left(  \Omega\right)  $ denote a closed subspace of
$H^{1}\left(  \Omega\right)  $, such as $V=H^{1}\left(  \Omega\right)  $ or
$V=H_{0}^{1}\left(  \Omega\right)  $, and $a:V\times
V\rightarrow\mathbb{R}$ denote a bilinear form, which is symmetric, continuous, and coercive:%
\begin{subequations}
\label{wellposed}
\end{subequations}%
\begin{equation}
a\left(  u,v\right)  =a\left(  v,u\right)  \qquad\forall u,v\in V 
\tag{\ref{wellposed}a}
\label{wellposeda}%
\end{equation}
and%
\begin{equation}
\left\vert a\left(  u,v\right)  \right\vert \leq C_{\operatorname*{cont}%
}\left\Vert u\right\Vert _{H^{1}\left(  \Omega\right)  }\left\Vert
v\right\Vert _{H^{1}\left(  \Omega\right)  }\qquad\forall u,v\in V 
\tag{\ref{wellposed}b}
\label{wellposedb}%
\end{equation}
and%
\begin{equation}
a\left(  u,u\right)  \geq c_{\operatorname*{coer}}\left\Vert u\right\Vert
_{H^{1}\left(  \Omega\right)  }^{2}\qquad\forall u\in V, 
\tag{\ref{wellposed}c}
\label{wellposedc}%
\end{equation}
with $0 < c_{\operatorname*{coer}} \leq C_{\operatorname*{cont}}$. 

We now consider the wave equation, where, for simplicity, we restrict ourselves to a homogeneous right-hand side:  For given $u_{0}\in V,v_{0}\in L^{2}\left(  \Omega\right)  $, find
$u:\left[  0,T\right]  \rightarrow V$ such that%
\begin{equation}
\left(  \ddot{u},w\right)  +a\left(  u,w\right)  = 0
\quad\forall w\in V,t>0 \label{waveeq}%
\end{equation}
with initial conditions%
\begin{equation}
u\left(  0\right)  =u_{0}\quad\text{and\quad}\dot{u}\left(  0\right)  =v_{0}.
\label{waveeqic}%
\end{equation}
It is well known that (\ref{waveeq})--(\ref{waveeqic}) is well-posed for
sufficiently regular $u_{0}$ and $v_{0}$  \cite{LionsMagenesI}. In fact,
the weak solution $u$ can be shown to be continuous in time, that is, $u\in
C^{0}(0,T;V) \cap C^{1}(0,T;L^{2}\left(  \Omega\right)  )$ -- see
[\cite{LionsMagenesI}, Chapter III, Theorems 8.1 and 8.2] for details -- which
implies that the initial conditions (\ref{waveeqic}) are well-defined.

\begin{example}
\label{Exmodel problem}The classical second-order wave equation in strong form
is given by%
\begin{equation}%
\begin{split}
\partial_t^2 u - \nabla\cdot(c^{2}\nabla u)  &  = 0 \qquad\;\,\mbox{in }\Omega
\times(0,T),\\
u  &  =0\qquad\;\,\mbox{on }\Gamma_{D}\times(0,T),\\
\frac{\partial u}{\partial\nu}  &  =0\qquad\;\,\mbox{on }\Gamma_{N}%
\times(0,T),\\
u|_{t=0}  &  =u_{0}\qquad\mbox{in }\Omega,\\
\partial_t u|_{t=0}  &  =v_{0}\qquad\mbox{in }\Omega,\\
&
\end{split}
\label{model problem}%
\end{equation}
where the velocity field $c(x)$ satisfies $0 < c_{\min} \leq c(x) \leq c_{\max}$.
In this case, we have $V:=H_{D}^{1}\left(  \Omega\right)  $ and $a\left(  u,v\right)  :=\left(  c^{2}\nabla u,\nabla v\right)  $.
\end{example}

\subsection{Galerkin Finite Element Discretization}

For the semi-discretization in space of (\ref{waveeq}), we shall employ the Galerkin finite element
method and thus first need to introduce some notation. For a spatial
dimension $d\in\left\{  1,2,3\right\}  $, we assume that the bounded Lipschitz domain
$\Omega\subset\mathbb{R}^{d}$ is an interval for $d=1$, a polygonal domain for
$d=2$, and a polyhedral domain for $d=3$. Now, let $\mathcal{T}:=\left\{  \tau
_{i}:1\leq i\leq N_{\mathcal{T}}\right\}  $ denote a conforming (i.e.: no
hanging nodes), simplicial finite element mesh for $\Omega$ with
\[
h_{\tau} := \operatorname*{diam}\tau,\quad h:=\max_{\tau\in\mathcal{T}}h_{\tau}%
\]
and $\rho_{\tau}$ the diameter of the largest inscribed ball in
$\tau$. Without loss of generality, we may assume that there is fixed $h_{0}>0$
such that%
\begin{equation}
h\leq h_{0}. \label{hsmallerh0}%
\end{equation}
Clearly, the choice $h_{0}=\operatorname*{diam}\Omega$ is always possible. As a
convention, the simplices $\tau\in\mathcal{T}$ are closed sets. The shape
regularity constant $\gamma$ of the mesh $\mathcal{T}$ is defined by%
\[
\gamma\left(  \mathcal{T}\right)  := \left\{
\begin{array}[c]{ll}%
1,  & d=1,\\
\max\limits_{\tau\in\mathcal{T}}\left\{\dfrac{h_{\tau}}{\rho_{\tau}}\right\}, & d=2,3.
\end{array}
\right.
\]

Let $\mathbb{P}_{m}$ denote the space of $d$-variate polynomials of maximal
total degree $m$. For a subset $\omega\subset\mathbb{R}^{d}$, we write
$\mathbb{P}_{m}\left(  \omega\right)  $ for $\left.  \mathbb{P}_{m}\right\vert
_{\omega}$. For $m\in\mathbb{N}$, we define the standard continuous, piecewise
polynomial finite element space by%
\[
S_{\mathcal{T}}^{m}:=\left\{  u\in C^{0}\left(  \Omega\right)  \mid\forall
\tau\in\mathcal{T}:\left.  u\right\vert _{\tau}\in\mathbb{P}_{m}\left(
\tau\right)  \right\}  
\]
and the space of piecewise constant functions by%
\[
S_{\mathcal{T}}^{0}:=\left\{  u\in L^{1}\left(  \Omega\right)  \mid\forall
\tau\in\mathcal{T}:\left.  u\right\vert _{\tau}\in\mathbb{P}_{0}\left(
\tau\right)  \right\}  .
\]
Although functions in $S_{\mathcal{T}}^{m}$ are $H^{1}$-conforming, they do not satisfy
any boundary conditions. 

The finite element space $S$, which we will use for the spatial Galerkin discretization, may equal $S_{\mathcal{T}}^{m}$ or
lie between two of the spaces
$S_{\mathcal{T}}^{m}$. To be more precise, let%
\[
\hat{\tau}:=\left\{  \mathbf{x}=\left(  x_{i}\right)  _{i=1}^{d}\in
\mathbb{R}_{\geq0}^{d}:\sum_{i=1}^{d}x_{i}\leq1\right\}
\]
be the reference simplex and
$\phi_{\tau}:\widehat{\tau}\rightarrow\tau$ denote an affine pullback for $\tau\in\mathcal{T}$. We
assume that the space $S$ is given by%
\begin{subequations}
\label{defSnew}
\end{subequations}%
\begin{equation}
S=\left\{  u\in V\mid\forall\tau\in\mathcal{T}:u\circ\phi_{\tau}\in P\right\}, 
\tag{\ref{defSnew}a}
\label{defSnewa}%
\end{equation}
for some fixed polynomial space $P$ which satisfies%
\begin{equation}
\mathbb{P}_{m}\left(  \hat{\tau}\right)  \subset P\subset\mathbb{P}_{m^{\prime}}\left(  \hat{\tau}\right)  
\tag{\ref{defSnew}b}
\label{defSnewb}%
\end{equation}
for a maximal integer $m\geq1$ and a minimal integer $m^{\prime}$. As a consequence, $S_{\mathcal{T}}^{m}\cap V\subset S\subset
S_{\mathcal{T}}^{m^{\prime}}\cap V$ obviously holds.

Then, the semi-discrete wave equation is given by: find $U_{S}:\left[
0,T\right]  \rightarrow S$ such that%
\begin{subequations}
\label{spacediscL2}
\end{subequations}%
\begin{equation}
\left(  \ddot{U}_{S},v\right)  +a\left(  U_{S},v\right)  =0\quad\forall v\in
S,\, t>0. 
\tag{\ref{spacediscL2}a}
\label{spacediscL2a}%
\end{equation}
To formulate initial conditions for $U_{S}$, the given data $u_{0}$, $v_{0}$ in (\ref{model problem}) must be mapped to the finite element space $S$. 
For any integer $s$ with $2\leq s\leq m+1$, any $0\leq\mu\leq s$, and any real number
$q$ with $2\leq q\leq\infty$, we assume there exists a bounded linear operator
$r_{S}:W^{s,q}\left(  \Omega\right)  \rightarrow S$ with the property:%
\begin{equation}
\left(  \sum_{K\in\mathcal{T}}\left\Vert r_{S}v-v\right\Vert _{W^{\mu
,q}\left(  K\right)  }^{q}\right)  ^{1/q}\leq Ch^{s-\mu}\left\Vert
v\right\Vert _{W^{s,q}\left(  \Omega\right)  }\qquad\forall v\in
W^{s,q}\left(  \Omega\right)  \cap V,\label{interpolationestimate}%
\end{equation}
for a constant $C$ independent of $v$ and $h$.

Once the operator $r_{S}$ is chosen, the initial conditions read%
\begin{equation}
U_{S}\left(  0\right)  =r_{S}u_{0}\qquad\text{and\qquad}\dot{U}_{S}\left(
0\right)  =r_{S}v_{0}
\tag{\ref{spacediscL2}b}
\label{spacediscL2b}%
\end{equation}
and we henceforth assume that $u_{0}, v_{0} \in 
H^{2}\left(  \Omega\right)  $.

\begin{example}
Since the space dimension satisfies $d\in\left\{  1,2,3\right\}  $, Sobolev's
embedding theorem implies $H^{2}\left(  \Omega\right)  \hookrightarrow
C^{0}\left(  \overline{\Omega}\right)  $ so that $r_{S}$ can be chosen as the
standard nodal interpolant into $S_{\mathcal{T}}^{m}\cap V$ (cf. \cite[Thm. 3.1.5]{CiarletPb}). The inclusion $S_{\mathcal{T}}^{m}\cap V\subset
S$ implies that $r_{S}$ also maps into $S$ and from \cite[Thm. 3.1.5]{CiarletPb}, we conclude that (\ref{interpolationestimate}) holds.
\end{example}

\subsection{Mass-Lumping\label{RemMasslumping}}

The basis representation of the $L^{2}$ scalar product in (\ref{spacediscL2a})
leads to a matrix, which is usually called the \emph{mass matrix}. 
For fully explicit time integration, it is crucial to replace this matrix by a \emph{lumped matrix},
which is positive definite, diagonal and preserves the optimal rates of convergence; it can also be interpreted as a
quadrature approximation of the local $L^{2}$ scalar products $\left(
\cdot,\cdot\right)  _{L^{2}\left(  \tau\right)  }$.

For $S$, we choose a local nodal basis $b_{z}$, $z\in\Sigma$, where
$\Sigma$ is a set of discrete nodal points in $\overline{\Omega}$
which serve as counting indices for the basis, such that%
\begin{equation}
\operatorname*{supp} b_{z} \subset \bigcup \left\{  \tau \in \mathcal{T}: z \in \tau \right\}  . 
\tag{\ref{defSnew}c}
\label{defSnewc}%
\end{equation}
Hence, every function $u\in S$ has the (unique) representation%
\begin{equation}
u=\sum_{z\in\Sigma}u_{z}b_{z}, \qquad u_{z} \in \mathbb{R}
\label{ubasisrep}%
\end{equation}
for some coefficient vector $\mathbf{u}=\left(  u_{z}\right)  _{z\in\Sigma}$.
By convention, $u\in S$ and $\left(  u_{z}\right)
_{z\in\Sigma}$ are related by
(\ref{ubasisrep}) whenever they appear in the same context.

\begin{definition}
Let $S$ satisfy (\ref{defSnew}). The space $S$ spanned by the basis $\left(
b_{z}\right)  _{z\in\Sigma}$ is \emph{mass-lumping admissible}, if there exists a
positive definite diagonal matrix $\mathbf{D}_{\Sigma}=\operatorname*{diag}%
\left[  d_{z}:z\in\Sigma\right]  $ such that the scalar product%
\begin{equation}
\left(  u,v\right)  _{\mathcal{T}}:=\sum_{z\in\Sigma}d_{z}u_{z}v_{z}%
\quad\forall u,v\in S
\label{uvtscalarp}
\end{equation}
induces a norm on $S$,
\begin{equation}
\left\Vert u\right\Vert _{\mathcal{T}}:=\left(  u,u\right)  _{\mathcal{T}%
}^{1/2}, \label{utnorm}%
\end{equation}
which is equivalent to the $L^{2}\left(  \Omega\right)  $-norm: there exists a
constant $C_{\operatorname*{eq}}>0$ such that%
\begin{equation}
C_{\operatorname*{eq}}^{-1}\left\Vert u\right\Vert _{\mathcal{T}}%
\leq\left\Vert u\right\Vert \leq C_{\operatorname*{eq}}\left\Vert u\right\Vert
_{\mathcal{T}}\quad\forall u\in S\text{.} \label{ceqCeq}%
\end{equation}
In addition, we require that the mass-lumped scalar product is exact up to degree
$m+m^{\prime}-2$, or more precisely, that it satisfies (cf. \cite[Cond. $\mathcal{A}$,
after (4.2)]{mass_lumping_2d})%
\begin{equation}
\left(  u,v \right)_{\mathcal{T}} = \left(  u,v\right)  
\quad \forall u \in S_{\mathcal{T}}^{k},
\, \forall v\in S_{\mathcal{T}}^{k^{\prime}},%
\; k+k^{\prime} \leq \max \left\{ m + m^{\prime} - 2, m^{\prime}+1 \right\}.
\label{ml_exactnesscondition}
\end{equation}

\end{definition}

We also associate to the restricted bilinear form the linear operator $A^{S}:S\rightarrow S$ defined by%
\begin{equation}
\left(  A^{S}u,v \right)_{\mathcal{T}} = a\left( u,v \right) \qquad\forall u,v\in S.
\label{defAS}
\end{equation}

\begin{remark}
In the one-dimensional case $d=1$, it is well known that the choice $S=S_{\mathcal{T}}^{m}\cap V$ is mass-lumping admissible, if $\Sigma$ corresponds to the Gauss-Lobatto quadrature points and $\mathbf{D}_{\Sigma}$ to the resulting quadrature approximation on each interval of the mesh \cite{COHEN}.

In the two-dimensional case $d=2$, the lowest order choice $S=S_{\mathcal{T}}^{1}\cap V$ is also mass-lumping admissible, if $\Sigma$ corresponds to the set of all triangle vertices except for those on the Dirichlet part of the boundary.
Then, the lumped mass matrix can be obtained via a quadrature scheme which employs the vertices of the triangles as quadrature points. 
For $m=2$ and $m=3$, the space $S_{\mathcal{T}}^{m}\cap V$ is not mass-lumping admissible for quadrature schemes defined on single triangles.
However, the space can be enriched by certain "bubble functions" in $S_{\mathcal{T}}^{m^{\prime}}$ with $m^{\prime} = m+1$ to obtain a finite element space $S$ and a quadrature scheme which is mass-lumping admissible \cite{mass_lumping_2d}. 
Alternatively, for $m=2$, the \textquotedblleft hybrid\textquotedblright\ quadrature rules as in \cite[Sect. 4]{GHS2} can be used for the standard space $S_{\mathcal{T}}^{m}\cap V$ provided the mesh satisfies certain connectivity assumptions on its graph.

In the three-dimensional case $d=3$, again the lowest order choice $S=S_{\mathcal{T}}^{1}\cap V$ is inherently mass-lumping admissible while for $m=2,3,4$, 
mass-lumping for enriched finite element spaces are discussed in \cite{mass_lumping3D}.
\end{remark}

Since local time-stepping methods use different time-steps in different parts of the domain, 
we now partition the finite element mesh $\mathcal{T}$ into a quasi-uniform (or \emph{coarse}) part $\mathcal{T}_{\operatorname*{c}}$ of mesh size
$h_{\operatorname*{c}}:=h$ and a locally refined (or \emph{fine}) part $\mathcal{T}_{\operatorname*{f}}:=\mathcal{T}\backslash\mathcal{T}_{\operatorname*{c}}$ of
mesh size $h_{\operatorname*{f}}:=\max_{\tau\in\mathcal{T}_{\operatorname*{f}}}h_{\tau}$, 
i.e., $\mathcal{T}=\mathcal{T}_{\operatorname*{c}}\cup \mathcal{T}_{\operatorname*{f}}$ and $\mathcal{T}_{\operatorname*{c}} \cap \mathcal{T}_{\operatorname*{f}}=\emptyset$. 
Similarly, we define coarse and locally
refined regions of the domain\footnote{By $\operatorname*{int}\left(M\right)  $ we denote the open interior of a subset $M\subset\mathbb{R}^{d}$.}, see Fig. \ref{FigMeshNotation}, as
\[
\Omega_{\operatorname*{c}} := \operatorname*{int}\left( {\textstyle\bigcup\nolimits_{\tau\in\mathcal{T}_{\operatorname*{c}}}} \tau\right)  , 
\quad 
\Omega_{\operatorname*{f}} := \operatorname*{int}\left( {\textstyle\bigcup\nolimits_{\tau\in\mathcal{T}_{\operatorname*{f}}}} \tau\right) , \quad 
\Omega_{\operatorname*{f}}^{+} := \operatorname*{int}\left( \Omega_{\operatorname*{f}}\cup\left( {\textstyle\bigcup\nolimits_{\tau\in\mathcal{T}_{\operatorname*{c}}, \tau\cap\overline{\Omega_{\operatorname*{f}}}\neq\emptyset}} \,\tau \right)  \right)  .
\]
Hence, $\Omega_{\operatorname*{f}}^{+} $ contains $\Omega_{\operatorname*{f}}$ and all elements 
directly adjacent to it. 
We note that $\Omega_{\operatorname*{f}}$ and $\Omega_{\operatorname*{c}}$ are disjoint, while their union covers all of $\Omega$ except for the coarse/fine interface, and that $\Omega_{\operatorname*{c}} \cup\Omega_{\operatorname*{f}}^{+} = \Omega$,
since the interface between $\Omega_{\operatorname*{f}}$ and $\Omega_{\operatorname*{c}}$ is contained in $\Omega_{\operatorname*{f}}^{+} $.

We also split the degrees of freedom associated with the fine or coarse parts of the mesh, respectively, as
\[
\Sigma_{\operatorname*{f}} := \Sigma\cap\overline{\Omega_{\operatorname*{f}}}
\quad\text{and\quad}
\Sigma_{\operatorname*{c}} := \Sigma\backslash\Sigma_{\operatorname*{f}},
\]
and introduce the corresponding FE subspaces 
\[
S_{\operatorname*{c}%
}:=\operatorname{span}\left\{  b_{z}:z\in\Sigma_{\operatorname*{c}}\right\}
\quad\mbox{and}\quad S_{\operatorname*{f}}:=\operatorname{span}\left\{  b_{z}:z\in
\Sigma_{\operatorname*{f}}\right\} . 
\] 
Since the support of functions
in $S_{\operatorname*{c}}$ is contained in $\overline{\Omega_{\operatorname*{c}}}$ and the support of functions in $S_{\operatorname*{f}}$ is contained in $\overline{\Omega_{\operatorname*{f}}^{+}}$ due to (\ref{defSnewc}), we have the direct sum decomposition: 
for every $u\in S$ there exist unique $u_{\operatorname*{c}}\in S_{\operatorname*{c}}$ and $u_{\operatorname*{f}}\in S_{\operatorname*{f}}$ such that
\begin{equation}
u=u_{\operatorname*{c}} + u_{\operatorname*{f}}. 
\label{uadddecomp}
\end{equation}
Hence, we can define the projections $\Pi_{\operatorname*{c}}^{S}: S \rightarrow S_{\operatorname*{c}}$ 
and $\Pi_{\operatorname*{f}}^{S}:S \rightarrow S_{\operatorname*{f}}$ by
\begin{equation}
\Pi_{\operatorname*{c}}^{S}u:=u_{\operatorname*{c}}\quad\text{and}\quad
\Pi_{\operatorname*{f}}^{S}u:=u_{\operatorname*{f}}. 
\label{defPifPic}%
\end{equation}
In fact, the decomposition (\ref{uadddecomp}) is
orthogonal with respect to the $\left(  \cdot,\cdot\right)  _{\mathcal{T}}$
scalar product -- see Lemma
\ref{LemEstPif} below --, which will be essential to prove sharp bounds for the eigenvalues of our discrete bilinearform in Theorem \ref{Coreigsys}.

\begin{figure}
\centering
\includegraphics[width=0.7\textwidth]{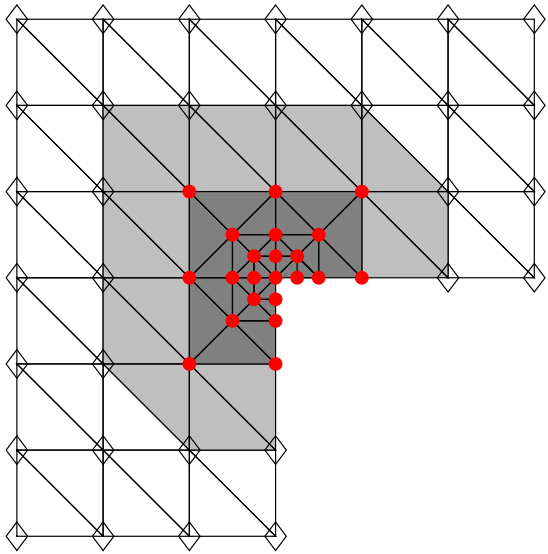}
\caption{A locally refined mesh for $d=2$ and $m=1$. 
The white and light grey triangles belong to $\Omega_{\operatorname*{c}}$, whereas the dark grey triangles belong to $\Omega_{\operatorname*{f}}$; 
$\Omega^{+}_{\operatorname*{f}}$ consists of both dark and light grey triangles. 
Vertices associated to $\Sigma_{\operatorname*{c}}$ are marked with black diamonds, 
vertices associated to  $\Sigma_{\operatorname*{f}}$ with red dots.}
\label{FigMeshNotation}
\end{figure}

With the definitions ans notations introduced above, the semi-discrete wave equation \emph{with mass-lumping} then is given by:
find $u_{S}:\left[  0,T\right]  \rightarrow S$ such that%
\begin{subequations}
\label{spacedisc}
\end{subequations}%
\begin{equation}
\left(  \ddot{u}_{S},v\right)_{\mathcal{T}} + a\left(  u_{S},v\right)  = 0
\quad\forall v\in S,t>0 
\tag{%
\ref{spacedisc}%
a}
\label{spacedisca}%
\end{equation}
with initial conditions%
\begin{equation}
u_{S}\left(  0\right)  = r_{S} u_{0} 
\quad \text{and\quad} 
\dot{u}_{S}\left(0\right)  = r_{S} v_{0}.
\tag{%
\ref{spacedisc}%
b}
\label{spacediscb}%
\end{equation}

\subsection{Stabilized LF-LTS Galerkin FE Formulation}
\label{sec:stablflts}
In \cite{DiazGrote09}, LF based local time-stepping (LTS) methods of arbitrarily high order for wave equations were proposed.
Optimal convergence rates for the original second order LF-LTS method were recently 
derived for a conforming FEM discretization, albeit under a CFL condition
where the global time-step $\Delta t$ in fact depends on the smallest elements in the mesh \cite{grote_sauter_1}. 

We now present the fully discrete space-time Galerkin FE formulation of the stabilized leapfrog based local time-stepping LF-LTS scheme. 
To do so, let the (global) time-step $\Delta t=T/N$ and $u_{S}^{\left(  n\right)  }$ denote the FE approximation at time $t_{n}=n\Delta t$.
Given the numerical solution $u_S^{(n-1)}$ and $u_S^{(n)}$ at times $t_{n-1}$ and $t_{n}$, 
the LF-LTS method then computes the numerical solution $u_{S}^{(n+1)}$ of \eqref{spacedisc} at $t_{n+1}$ by using a smaller time-step $\Delta\tau=\Delta t/p$ inside $\Omega_{\operatorname*{f}}^{+}$, 
where $p\in\mathbb{N}$ denotes the ``coarse'' to ``fine'' time-step ratio. 
Clearly, if the maximal velocity in the coarse and the fine regions differ significantly, the choice of $p$ should also reflect that variation and instead denote the local CFL number ratio. 

For the definition of the stabilized LF-LTS method, we will employ stabilized Chebyshev polynomials  \cite{HV03}, also used in \cite{CarHocStu19} for the LFC methods. They are based on 
Chebyshev polynomials of the first kind \cite{Rivlin}, denoted by $T_{p}$, and a (small) stabilization parameter $0 \leq \nu \leq 1/2$.
The smaller $\nu$, the smaller the amount of stabilization added to the LTS method.
For $\nu = 0$, no stabilization is applied and the LTS method reduces to the original scheme from \cite{DiazGrote09,grote_sauter_1}.

Next, let $P_{p,\nu}\in\mathbb{P}_{p}$ denote the polynomial
\begin{equation}
P_{p,\nu}\left( x \right)  := 2 \left( 1 - \frac{ T_{p} \left( \delta_{p,\nu} - \frac{x}{\omega_{p,\nu}} \right) }{ T_{p} \left( \delta_{p,\nu} \right) } \right)  
\quad\text{with }
\left\{
\begin{aligned}
\omega_{p,\nu} &:= 2 \,\frac{ T_{p}^{\prime} \left(  \delta_{p,\nu} \right) }{ T_{p} \left( \delta_{p,\nu} \right) },\\
\delta_{p,\nu} &:= 1+\nu/p^{2},
\end{aligned}
\right.  
\label{defpolys1}%
\end{equation}
which correspond to shifted and rescaled \emph{stabilized (aka damped) Chebyshev} polynomials \cite[(2.3c)]{CarHocStu19}.
Since $P_{p,\nu}\left( 0 \right) = 0$ and $P_{p,\nu}^{\prime}\left( 0 \right) = 1$, we may rewrite $P_{p,\nu}$ as%
\[
P_{p,\nu}\left( x \right) = x \left( 1 - \frac{2}{p^{2}} \sum_{j=1}^{p-1} \alpha_{j}^{p,\nu} \left( \frac{x}{p^{2}} \right)^{j} \right)  
\]
for some coefficients $\alpha_{j}^{p,\nu}$. As the coefficients $\alpha_{j}^{p,\nu}$ are never needed for the algorithm, we omit an explicit representation here.
We also define the reduced polynomials $P_{p,\nu}^{\Delta t}, Q_{p,\nu}^{\Delta t} \in \mathbb{P}_{p-1}$ by
\begin{equation}
P_{p,\nu}^{\Delta t}\left(  x\right)  := 
\frac{P_{p,\nu}\left(  \Delta t^{2}x\right)}{\Delta t^{2}x}
\quad\text{and\quad}
Q_{p,\nu}^{\Delta t}\left(  x\right)  := 1 - P_{p,\nu}^{\Delta t}\left(  x\right).
\label{defQpnu}%
\end{equation}
Note that the polynomials $Q_{p,\nu}^{\Delta t}$, $P_{p,\nu}^{\Delta t}$ can be applied to any linear operator which is an endomorphism. 

In Fig. \ref{FigPpnue}, we display the polynomials $P_{p,\nu}\left(p^2 x\right)$ for $x \in [0,4]$ and different values of $p$ and $\nu$. 
For $\nu = 0$, we observe that their values also lie within $[0,4]$, with $p-1$ points of tangency at the upper or lower bounds. 
In contrast for $\nu=0.3$, the values of $P_{p,\nu}\left(p^2 x\right)$ stay strictly inside $(0,4)$ for $x\in [0,4-\epsilon]$, for some $\epsilon > 0$.
\begin{figure}
\centering
\includegraphics[width = 0.49\textwidth]{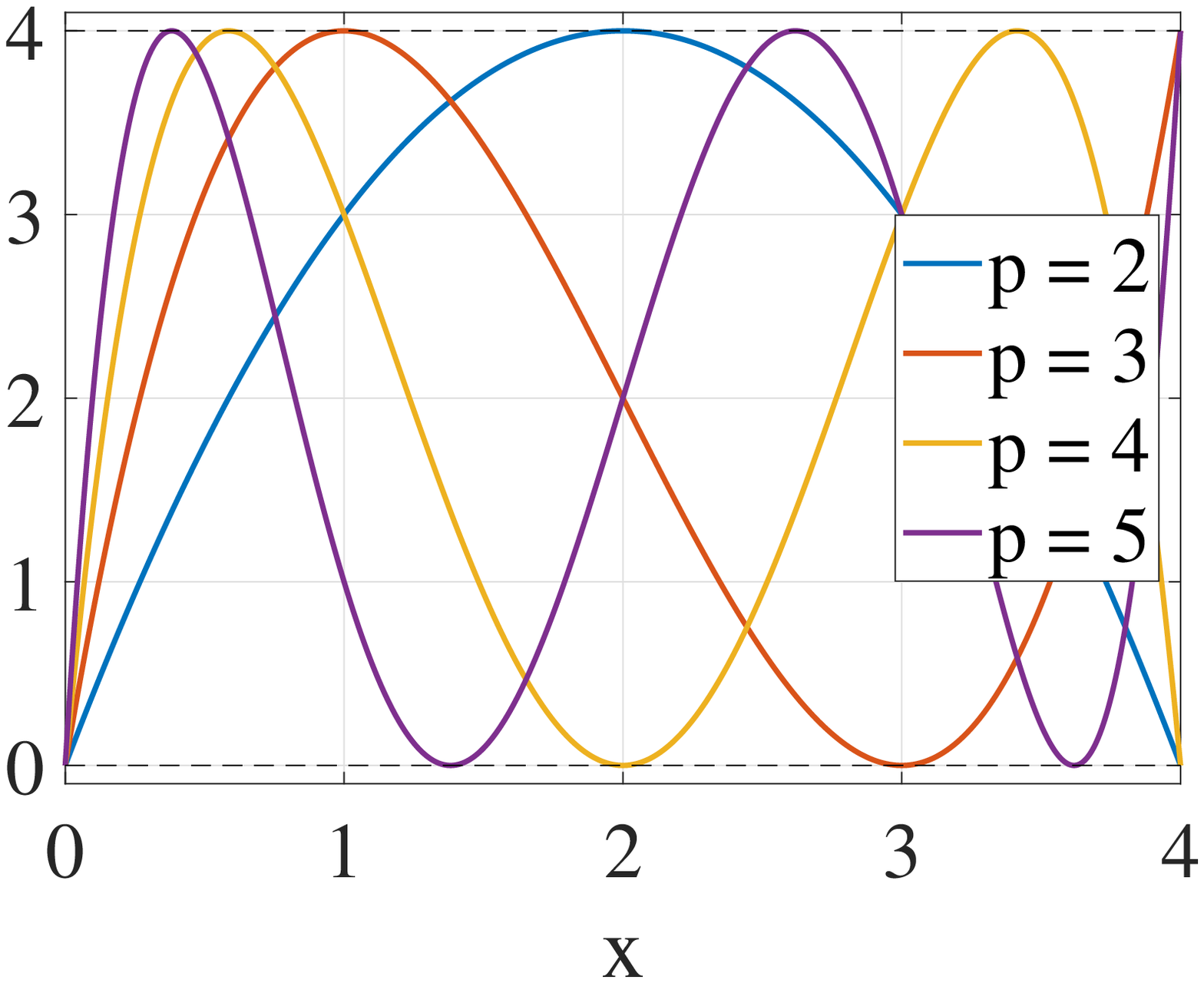}
\includegraphics[width = 0.49\textwidth]{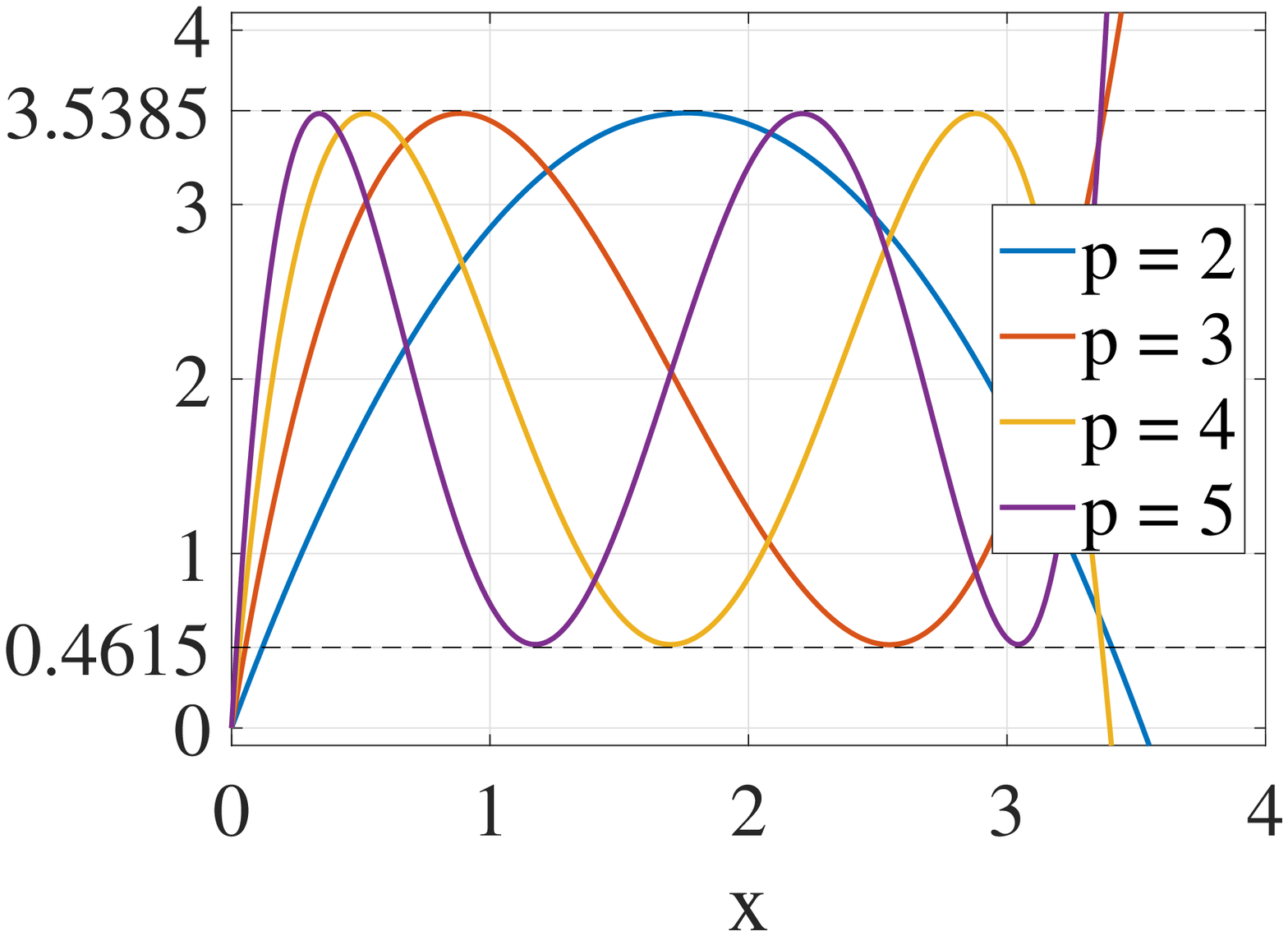}
\caption{Graphs of $P_{p,\nu}\left(p^2 x\right)$ for different $p$ and $\nu=0$ (left) or $\nu=0.3$ (right). Dashed lines indicate the bounds from Lemma \ref{LemPpnue}.}
\label{FigPpnue}
\end{figure}

Now, let $a^{p,\nu}:S\times S\rightarrow\mathbb{R}$  denote the stabilized discrete bilinear form
\begin{equation}
a^{p,\nu} \left( u,v \right) := a\left( u - Q_{p,\nu}^{\Delta t} \left( \Pi_{\operatorname*{f}}^{S} A^{S} \right) u,v \right)  \qquad \forall u,v\in S
\label{defapv}%
\end{equation}
with associated (linear) operator $A^{S,p,\nu}: S \rightarrow S$%
\begin{align}
A^{S,p,\nu}  &  :=A^{S}\left(  I-Q_{p,\nu}^{\Delta t}\left(  \Pi
_{\operatorname*{f}}^{S}A^{S}\right)  \right)  =A^{S}P_{p,\nu}^{\Delta
t}\left(  \Pi_{\operatorname*{f}}^{S}A^{S}\right) \label{defASp}\\
&  =\left(  A^{S}\right)  ^{1/2}P_{p,\nu}^{\Delta t}\left(  \left(
A^{S}\right)  ^{1/2}\Pi_{\operatorname*{f}}^{S}\left(  A^{S}\right)
^{1/2}\right)  \left(  A^{S}\right)  ^{1/2}.\nonumber
\end{align}

The stabilized LF-LTS method is then defined for $n\geq 1$ as
\begin{equation}%
\begin{aligned}
\left( u_{S}^{\left(  n+1\right)} - 2 u_{S}^{\left(  n\right)  } + u_{S}^{\left(  n-1\right)},w \right)  _{\mathcal{T}} 
&= -\Delta t^{2} a^{p,\nu}\left( u_{S}^{\left( n\right) },w \right)   &  &\forall w\in S,\\
u_{S}^{\left(  0\right)  } &= r_S u_{0}\\
u_{S}^{\left(  1\right)  } &= r_S u_{0} + \Delta t \, r_S v_{0} - \frac{\Delta t^{2}}{2} A^{S}  u_{0}
\end{aligned}
\label{leap_frog_lts_fd}%
\end{equation}
The same algorithm was also developed independently and in parallel by the group of M. Hochbruck \cite{CarHoc}.
For $p=1$, the bilinear forms $a^{p,\nu}$ and $a$ coincide because $\omega_{1,\nu} = 2 / \delta_{1,\nu}$ and therefore $Q_{1,\nu}^{\Delta t}(x)$ is identically zero; 
then (\ref{leap_frog_lts_fd}) corresponds to the standard leapfrog scheme.

\begin{remark}
In (\ref{leap_frog_lts_fd}), the term $A^S u_{0}$ in the third equation could be replaced by $A^{S,p,\nu}  u_{0}$
thereby allowing for local time-stepping already during the very first time-step. 
In that case, the analysis below also applies but requires a minor change, namely, replacing
$A^{S}$ by $A^{S,p,\nu}$ in (\ref{initerr1}) and (\ref{initerr2}) below. 
This modification neither affects the stability nor the convergence rate of the overall stabilized LF-LTS scheme.
\end{remark}
Given the constants
\begin{equation}
\beta_k := \frac{T_{k-1}\left( \delta_{p,\nu} \right)}{T_{k+1}\left( \delta_{p,\nu} \right)} 
\quad \text{and} \quad 
\beta_{k+1/2} := \frac{ T_{k}\left( \delta_{p,\nu} \right) }{ T_{k+1}\left( \delta_{p,\nu} \right) }, \quad 1 \leq k \leq p-1,
\label{Def_beta}
\end{equation}
we list the full stabilized LF-LTS algorithm.
Note that $\beta_k$ and $\beta_{k+1/2}$ only need to be computed once for any particular choice of $p$ and $\nu$.

\begin{algorithm}
Let $n \geq 1$.
\begin{enumerate}
\item Given $u_S^{(n-1)}$, $u_S^{(n)}$, set $z_{S,0}^{(n)} := u_S^{(n)}$ and compute $w_S^{(n)}$ as
\[
w_S^{(n)} = A^{S} \Pi_{\operatorname*{c}}^{S} z_{S,0}^{(n)}.
\]
\item Compute
\[
z_{S,1}^{(n)} = z_{S,0}^{(n)} - 
\frac{1}{2} \left( \frac{\Delta t}{p} \right)^2 \left( \frac{2 p^2}{\omega_{p,\nu} \delta_{p,\nu} } \right) 
\left( w_S^{(n)} + A^{S} \Pi_{\operatorname*{f}}^{S} z_{S,0}^{(n)} \right).
\]
\item For $k = 1,\ldots,p-1,$ compute
\begin{align*}
z_{S,k+1}^{(n)} &= \left( 1 + \beta_{k} \right) z_{S,k}^{(n)} - \beta_{k} z_{S,k-1}^{(n)} \\
&\quad - \left( \frac{\Delta t}{p} \right)^2 \left( \frac{2 p^2}{\omega_{p,\nu}} \right) \beta_{k+ 1/2} 
\left( w_{S}^{(n)} + A^{S} \Pi_{\operatorname*{f}}^{S} z_{S,k}^{(n)} \right).
\end{align*}
\item Compute
\[
u_{S}^{(n+1)} = - u_{S}^{(n-1)} + 2 z_{S,p}^{(n)}.
\]
\end{enumerate}
\end{algorithm}

The above algorithm is equivalent to (\ref{leap_frog_lts_fd}) -- see Theorem \ref{TheoEquivLTSAlgo} in Appendix \ref{secAppB}.

If the target space $S_{\operatorname*{f}}$ of $\Pi_{\operatorname*{f}}$ is relatively small, the overall cost will be dominated by the computation of $w_S^{(n)}$, which requires a single operation with $A^{S} \Pi_{\operatorname*{c}}^{S}$ per time-step $\Delta t$. 
All further operations with $A^{S} \Pi_{\operatorname*{f}}^{S}$ involve only those unknowns that are associated with $\Omega_{\operatorname*{f}}^{+}$.

Inside $\Omega_{\operatorname*{c}}$, away from the coarse/fine interface, the above algorithm reduces to the standard LF method with time step $\Delta t$, regardless of $p$ or $\nu$. 
Indeed, those degrees of freedom inside $\Omega_{\operatorname*{c}}$
are not affected by any terms involving  $A^{S} \Pi_{\operatorname*{f}}^{S} z_{S,k}^{(n)} $. By summing 
up all local time-steps $k = 1, \dots, p-1$ and applying standard Chebyshev recursions, the update
inside $\Omega_{\operatorname*{c}}$ reduces to 
\[
z_{S,p}^{(n)} = u_S^{(n)} - \frac{\Delta t^2}{2} A^S u_S^{(n)}.
\]
Hence when combined with Step 4 in the above algorithm, the update of the numerical solution inside $\Omega_{\operatorname*{c}}$ simply corresponds to a standard LF step of size $\Delta t$.

Moreover, the above notation immediately translates into the discrete matrix-vector-type notation previously used in \cite[section 3.1]{DiazGrote09} by recognizing that the operators $A^{S}$, $\Pi_{\operatorname*{f}}^{S}$ correspond to the matrices $A$ and $P$ used there. 
In \cite{DiazGrote09}, the diagonal matrix $P$, with entries equal to zero or one, identifies the unknowns associated to the locally refined region; 
hence, the projection $\Pi_{\operatorname*{c}}^{S}$ corresponds to the matrix $I - P$. 

\begin{remark}
For $\nu=0$, the above algorithm coincides with the original LF-LTS algorithm from \cite{DiazGrote09,grote_sauter_1}, since $\delta_{p,0} = 1$, and therefore $\beta_k = \beta_{k+1/2} = 1$ and $\omega_{p,0} = 2p^2$ because $T_p^{\prime}(1) = p^2$, see \cite{Rivlin}.
\end{remark}

\begin{remark}
\label{RemarkLFCLTS}
If $\Pi^S_{\operatorname*{f}}$ corresponds to the identity, i.e. a smaller time-step is applied everywhere, the above algorithm reduces to the leapfrog-Chebyshev (LFC) formulation from \cite[Corollary 3]{GilbertJoly} for $\nu=0$ and to the stabilized LFC method \cite{CarHocStu19} for $\nu > 0$. In contrast to 
the LFC method from \cite{CarHocStu19}, which uses the same stabilized Chebyshev polynomials in its formulation, the original derivation in \cite{DiazGrote09} of the LF-LTS Algorithm is based on the initial operator splitting $A^S = A^S \Pi^S_{\operatorname*{c}} + A^S \Pi^S_{\operatorname*{f}}$, where both terms 
are nonsymmetric and singular. 
However, when rewritten in leapfrog manner, the formulation (\ref{defapv})--(\ref{leap_frog_lts_fd}) yields a symmetric bilinear form $a^{p,\nu}$.
\end{remark}

\section{Stability and Convergence Analysis\label{SecStabCons}}

\subsection{Estimates of the Bilinearform\label{SecSetting}}

The time-steps $\Delta t$ and $\Delta\tau = \Delta t / p$, $p\in\mathbb{N}$, used in $\mathcal{T}_{\operatorname*{c}}$ and $\mathcal{T}_{\operatorname*{f}}$, respectively, are each determined by the smallest element in either part of the mesh.
Hence, the ``coarse''-to-``fine'' time-step ratio, $p$, must satisfy
\[
\dfrac{ \min\left\{  h_{\tau}:\tau\in\mathcal{T}_{\operatorname*{c}}\right\} }{h_{\tau}} \leq p \qquad\forall\tau\in\mathcal{T}.
\]
Given the quasi-uniformity constant for the \emph{coarse} mesh $\mathcal{T}_{\operatorname*{c}}$,
\[
C_{\operatorname*{qu},\operatorname*{c}}:=\frac{h_{\operatorname*{c}}}%
{\min\left\{  h_{\tau}:\tau\in\mathcal{T}_{\operatorname*{c}}\right\}  },
\]
the ratio between the largest and smallest elements of the mesh is thus bounded by
\begin{equation}
\dfrac{h_{\operatorname*{c}}} {h_{\tau}} \leq
C_{\operatorname*{qu},\operatorname*{c}} p \qquad\forall\tau\in\mathcal{T}. 
\label{hfphc}%
\end{equation}

\begin{lemma}
\label{Leminvineq}
Let (\ref{hsmallerh0}), (\ref{ceqCeq}), and (\ref{hfphc}) be
satisfied. There exists a constant $C_{\operatorname*{inv},\operatorname*{loc}%
}>0$, which only depends on $\gamma\left(  \mathcal{T}\right)  $ and $m^{\prime}$, such
that for all $\tau\in\mathcal{T}$%
\begin{equation}
\left\Vert \nabla u\right\Vert _{L^{2}\left(  \tau\right)  }\leq
C_{\operatorname*{inv},\operatorname*{loc}}h_{\tau}^{-1}\left\Vert
u\right\Vert _{L^{2}\left(  \tau\right)  },\qquad\forall u \in%
S_{\mathcal{T}}^{m^{\prime}}. \label{defcinv}%
\end{equation}
The global version for the coarse mesh part of $\mathcal{T}$ involves the
quasi-uniformity constant $C_{\operatorname*{qu},\operatorname*{c}}:$%
\begin{equation}
\left\Vert u\right\Vert _{H^{1}\left(  \Omega_{\operatorname*{c}}\right)
}\leq C_{\operatorname*{inv}} h_{\operatorname*{c}}%
^{-1}\left\Vert u\right\Vert _{\mathcal{T}}\quad\forall u\in S
\label{globalinvcoarse}%
\end{equation}
for $C_{\operatorname*{inv}} := C_{\operatorname*{eq}}%
\sqrt{h_{0}^{2}+C_{\operatorname*{qu},\operatorname*{c}}^{2}%
C_{\operatorname*{inv},\operatorname*{loc}}^{2}}$.

The global version on the entire domain is%
\begin{equation}
\left\Vert u\right\Vert _{H^{1}\left(  \Omega\right)  }\leq
C_{\operatorname*{inv}}\frac{p}{h_{\operatorname*{c}}}\left\Vert u\right\Vert
_{\mathcal{T}}\quad\forall u\in S. \label{globalinvinequ2}%
\end{equation}
\end{lemma}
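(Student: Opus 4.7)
\medskip

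\noindent\textbf{Proof plan.} All three estimates are classical inverse-inequality arguments, so the real content of the lemma is the bookkeeping of constants, and in particular showing how the factor $p/h_{\operatorname*{c}}$ arises from the geometric assumption (\ref{hfphc}). My plan is to establish (\ref{defcinv}) by a standard scaling argument on the reference simplex, and then deduce (\ref{globalinvcoarse}) and (\ref{globalinvinequ2}) by summing over the appropriate subset of $\mathcal{T}$ and applying (\ref{ceqCeq}) once.

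For (\ref{defcinv}), I would fix $\tau\in\mathcal{T}$ with affine pullback $\phi_{\tau}:\widehat{\tau}\to\tau$ and set $\widehat{u}:=u\circ\phi_{\tau}\in P\subset\mathbb{P}_{m^{\prime}}(\widehat{\tau})$. Since $\mathbb{P}_{m^{\prime}}(\widehat{\tau})$ is finite dimensional, all seminorms are equivalent, so $\|\widehat{\nabla}\widehat{u}\|_{L^{2}(\widehat{\tau})}\leq c_{m^{\prime}}\|\widehat{u}\|_{L^{2}(\widehat{\tau})}$. Transferring back with the chain rule gives factors $\|D\phi_{\tau}^{-1}\|\lesssim\rho_{\tau}^{-1}$ and $|\det D\phi_{\tau}|$, which combine with shape regularity $h_{\tau}/\rho_{\tau}\leq\gamma(\mathcal{T})$ to yield (\ref{defcinv}) with a constant depending only on $\gamma(\mathcal{T})$ and $m^{\prime}$. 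This step is entirely standard and I would just sketch it.

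For (\ref{globalinvcoarse}), note that any $u\in S$ restricted to a single $\tau$ lies in $\mathbb{P}_{m^{\prime}}(\tau)$, so (\ref{defcinv}) applies. I would then write
\[
\|u\|_{H^{1}(\Omega_{\operatorname*{c}})}^{2}
=\|u\|_{L^{2}(\Omega_{\operatorname*{c}})}^{2}+\sum_{\tau\in\mathcal{T}_{\operatorname*{c}}}\|\nabla u\|_{L^{2}(\tau)}^{2}
\leq \|u\|^{2}+C_{\operatorname*{inv},\operatorname*{loc}}^{2}\sum_{\tau\in\mathcal{T}_{\operatorname*{c}}} h_{\tau}^{-2}\|u\|_{L^{2}(\tau)}^{2}.
\]
By definition of $C_{\operatorname*{qu},\operatorname*{c}}$, for every $\tau\in\mathcal{T}_{\operatorname*{c}}$ one has $h_{\tau}^{-1}\leq C_{\operatorname*{qu},\operatorname*{c}}/h_{\operatorname*{c}}$, while the $L^{2}$-term is bounded by $h_{\operatorname*{c}}^{-2}h_{0}^{2}\|u\|^{2}$ using (\ref{hsmallerh0}). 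Applying (\ref{ceqCeq}) then gives
\[
\|u\|_{H^{1}(\Omega_{\operatorname*{c}})}^{2}\leq C_{\operatorname*{eq}}^{2}\,h_{\operatorname*{c}}^{-2}\bigl(h_{0}^{2}+C_{\operatorname*{qu},\operatorname*{c}}^{2}C_{\operatorname*{inv},\operatorname*{loc}}^{2}\bigr)\|u\|_{\mathcal{T}}^{2},
\]
which is (\ref{globalinvcoarse}) with the stated $C_{\operatorname*{inv}}$.

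For (\ref{globalinvinequ2}), the argument is identical but the sum now runs over all $\tau\in\mathcal{T}$, and the uniform bound on $h_{\tau}^{-1}$ is replaced by $h_{\tau}^{-1}\leq C_{\operatorname*{qu},\operatorname*{c}}\,p/h_{\operatorname*{c}}$ from (\ref{hfphc}). Since $p\geq 1$, the $L^{2}$ contribution $h_{0}^{2}/h_{\operatorname*{c}}^{2}$ is absorbed into $h_{0}^{2}p^{2}/h_{\operatorname*{c}}^{2}$, so the same constant $C_{\operatorname*{inv}}$ works and we get the factor $p/h_{\operatorname*{c}}$ in front. I do not expect a real obstacle here: the only point that needs a line of care is the absorption of the $L^{2}$-term using $p\geq 1$, so that the constant in (\ref{globalinvinequ2}) can be kept identical to the one in (\ref{globalinvcoarse}) rather than being enlarged.
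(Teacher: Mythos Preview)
Your proposal is correct and follows essentially the same approach as the paper: the local estimate (\ref{defcinv}) is the standard inverse inequality obtained by scaling to the reference element, and the two global versions follow by summing over $\mathcal{T}_{\operatorname*{c}}$ or $\mathcal{T}$, replacing $h_{\tau}^{-1}$ by $C_{\operatorname*{qu},\operatorname*{c}}/h_{\operatorname*{c}}$ or $C_{\operatorname*{qu},\operatorname*{c}}\,p/h_{\operatorname*{c}}$ via (\ref{hfphc}), and finally invoking (\ref{ceqCeq}); your explicit handling of the $L^{2}$-part via $h_{\operatorname*{c}}\leq h_{0}$ and $p\geq 1$ is exactly the step the paper condenses into ``which immediately implies (\ref{globalinvcoarse})'' and the final displayed chain for (\ref{globalinvinequ2}).
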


\begin{proof}
It is well known that the functions in $S_{\mathcal{T}}^{m^{\prime}}$ satisfy the
inverse inequality (\ref{defcinv}) (for a proof we refer, e.g., \cite[(3.2.33)
with $m^{\prime}=1$, $q=r=2$, $l=0$, $n=d$.]{CiarletPb}\footnote{There is a misprint in
\cite{CiarletPb} $m-1$ should be replaced by $m-\ell$; see also \cite[(4.5.3)
Lemma]{scottbrenner3}.}).

For the global version on the coarse mesh, we note that
\begin{align*}
\left\Vert \nabla u\right\Vert _{L^{2}\left(  \Omega_{\operatorname*{c}%
}\right)  }^{2}  &  =\sum_{\tau\in\mathcal{T}_{\operatorname*{c}}}\left\Vert
\nabla u\right\Vert _{L^{2}\left(  \tau\right)  }^{2}\leq
C_{\operatorname*{inv},\operatorname*{loc}}^{2}\sum_{\tau\in\mathcal{T}%
_{\operatorname*{c}}}h_{\tau}^{-2}\left\Vert u\right\Vert _{L^{2}\left(
\tau\right)  }^{2}\\
&  \leq C_{\operatorname*{inv},\operatorname*{loc}}^{2}C_{\operatorname*{qu}%
,\operatorname*{c}}^{2}h_{\operatorname*{c}}^{-2}\sum_{\tau\in\mathcal{T}%
_{\operatorname*{c}}}\left\Vert u\right\Vert _{L^{2}\left(  \tau\right)  }%
^{2}=C_{\operatorname*{inv},\operatorname*{loc}}^{2}C_{\operatorname*{qu}%
,\operatorname*{c}}^{2}h_{\operatorname*{c}}^{-2}\left\Vert u\right\Vert
_{L^{2}\left(  \Omega_{\operatorname*{c}}\right)  }^{2}\\
&  \leq C_{\operatorname*{eq}}^{2}C_{\operatorname*{inv},\operatorname*{loc}%
}^{2}C_{\operatorname*{qu},\operatorname*{c}}^{2}h_{\operatorname*{c}}%
^{-2}\left\Vert u\right\Vert _{\mathcal{T}}^{2},
\end{align*}
which immediately implies (\ref{globalinvcoarse}).

For the bound on the entire domain, we obtain from (\ref{hfphc}), (\ref{defcinv}) that
\[
\begin{aligned}
\left\Vert \nabla u\right\Vert _{L^{2}\left(  \Omega\right)  }^{2} 
&\leq
C_{\operatorname*{inv},\operatorname*{loc}}^{2}\sum_{\tau\in\mathcal{T}%
}h_{\tau}^{-2}\left\Vert u\right\Vert _{L^{2}\left(  \tau\right)  }^{2} \\
&\leq \frac{C_{\operatorname*{inv},\operatorname*{loc}}^{2} C_{\operatorname*{qu}%
,\operatorname*{c}}^{2} p^{2}}%
{h_{\operatorname*{c}}^{2}%
}\left\Vert u\right\Vert ^{2} 
\leq C_{\operatorname*{eq}}^{2} \frac
{C_{\operatorname*{inv},\operatorname*{loc}}^{2} C_{\operatorname*{qu}%
,\operatorname*{c}}^{2} p^{2}}{h_{\operatorname*{c}}^{2}} \left\Vert u\right\Vert
_{\mathcal{T}}^{2}.
\end{aligned}
\]
Hence, since $p\geq 1$, we conclude that
\[
\left\Vert u\right\Vert _{H^{1}\left(  \Omega\right)  }
\leq
C_{\operatorname*{eq}} \sqrt{1+\frac
{C_{\operatorname*{inv},\operatorname*{loc}}^{2} C_{\operatorname*{qu}%
,\operatorname*{c}}^{2} p^{2}}{h_{\operatorname*{c}}^{2}}}\left\Vert u\right\Vert _{\mathcal{T}}
\leq
\frac{p}{h_{\operatorname*{c}}} C_{\operatorname*{eq}} \sqrt{h_{0}^{2}+ C_{\operatorname*{inv},\operatorname*{loc}}^{2} C_{\operatorname*{qu}%
,\operatorname*{c}}^{2} }
\left\Vert u\right\Vert _{\mathcal{T}}.
\]\end{proof}

\begin{lemma}
\label{LemEstPif}
The decomposition $u = u_{\operatorname*{c}} + u_{\operatorname*{f}}$ defined in (\ref{uadddecomp}), (\ref{defPifPic}) is orthogonal with respect to the $\left( \cdot,\cdot \right)_{\mathcal{T}}$ scalar product, i.e.%
\begin{equation}
\left\Vert \Pi^{S}_{\operatorname*{f}} u \right\Vert_{\mathcal{T}}^2
+ \left\Vert \Pi^{S}_{\operatorname*{c}} u \right\Vert_{\mathcal{T}}^2
= \left\Vert u \right\Vert_{\mathcal{T}}^2
\qquad\forall u \in S_{\mathcal{T}}^{m}. 
\label{Rfest}%
\end{equation}
Let (\ref{wellposedb}), (\ref{hsmallerh0}), (\ref{ceqCeq}), and (\ref{hfphc}) be satisfied. 
For $v \in S_{\mathcal{T}}^{m}$ it holds%
\[
\left\Vert A^{S}v\right\Vert _{\mathcal{T}}\leq C_{\operatorname*{cont}%
}C_{\operatorname*{inv}}\frac{p}{h_{\operatorname*{c}}}\left\Vert v\right\Vert
_{H^{1}\left(  \Omega\right)  }\leq C_{\operatorname*{cont}}%
C_{\operatorname*{inv}}^{2}\left(  \frac{p}{h_{\operatorname*{c}}}\right)
^{2}\left\Vert v\right\Vert _{\mathcal{T}}.
\]
The operator $\Pi^{S}_{\operatorname*{f}}$ is self-adjoint with respect to the $(\cdot,\cdot)_{\mathcal{T}}$ scalar product, positive semi-definite, and satisfies
\begin{equation}
\left\Vert \Pi^{S}_{\operatorname*{f}}A^{S}v\right\Vert _{\mathcal{T}}\leq
C_{\operatorname*{cont}}C_{\operatorname*{inv}}^{2}\left(  \frac
{p}{h_{\operatorname*{c}}}\right)  ^{2}\left\Vert v\right\Vert _{\mathcal{T}}.
\label{RfASest}%
\end{equation}
In addition, if (\ref{wellposedc}) holds, the operator $A^{S}$ satisfies the estimate%
\begin{equation}
\left(  A_{S} v,v \right)  _{\mathcal{T}}\geq c_{\operatorname*{coer}}\left\Vert
v \right\Vert _{H^{1}\left(  \Omega\right)  }^{2}\geq\frac
{c_{\operatorname*{coer}}}{C_{\operatorname*{eq}}^{2}}\left\Vert v \right\Vert_{\mathcal{T}}^{2}\qquad \forall v\in S. 
\label{lowevAs}%
\end{equation}
\end{lemma}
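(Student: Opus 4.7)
The lemma bundles four related facts, all of which I expect to follow from the diagonal structure of the mass-lumped inner product together with Lemma \ref{Leminvineq}; no deep argument is needed.

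I would start with the orthogonality identity (\ref{Rfest}). Since every $u\in S$ has the unique expansion $u=\sum_{z\in\Sigma}u_z b_z$, the splitting $\Sigma=\Sigma_{\operatorname*{c}}\dot\cup\Sigma_{\operatorname*{f}}$ corresponds exactly to the additive decomposition $u=\Pi_{\operatorname*{c}}^{S}u+\Pi_{\operatorname*{f}}^{S}u$. Because the mass-lumped scalar product $(u,v)_{\mathcal{T}}=\sum_{z\in\Sigma}d_z u_z v_z$ is diagonal in this basis, the cross term $(\Pi_{\operatorname*{c}}^{S}u,\Pi_{\operatorname*{f}}^{S}u)_{\mathcal{T}}$ vanishes identically, which yields (\ref{Rfest}). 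The very same diagonal structure gives, for arbitrary $u,v\in S$,
\[
(\Pi_{\operatorname*{f}}^{S}u,v)_{\mathcal{T}}=\sum_{z\in\Sigma_{\operatorname*{f}}}d_z u_z v_z=(u,\Pi_{\operatorname*{f}}^{S}v)_{\mathcal{T}},
\qquad
(\Pi_{\operatorname*{f}}^{S}u,u)_{\mathcal{T}}=\sum_{z\in\Sigma_{\operatorname*{f}}}d_z u_z^{2}\geq 0,
\]
so $\Pi_{\operatorname*{f}}^{S}$ is self-adjoint and positive semi-definite with respect to $(\cdot,\cdot)_{\mathcal{T}}$, and (\ref{Rfest}) in particular implies the contractivity $\|\Pi_{\operatorname*{f}}^{S}w\|_{\mathcal{T}}\leq\|w\|_{\mathcal{T}}$ for all $w\in S$.

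Next I would establish the norm bound on $A^{S}$. Testing the defining identity $(A^{S}v,w)_{\mathcal{T}}=a(v,w)$ with $w:=A^{S}v\in S$ and using continuity (\ref{wellposedb}),
\[
\|A^{S}v\|_{\mathcal{T}}^{2}=a(v,A^{S}v)\leq C_{\operatorname*{cont}}\,\|v\|_{H^{1}(\Omega)}\,\|A^{S}v\|_{H^{1}(\Omega)}.
\]
The global inverse estimate (\ref{globalinvinequ2}) applied to $A^{S}v\in S$ yields $\|A^{S}v\|_{H^{1}(\Omega)}\leq C_{\operatorname*{inv}}(p/h_{\operatorname*{c}})\|A^{S}v\|_{\mathcal{T}}$, so that after dividing by $\|A^{S}v\|_{\mathcal{T}}$,
\[
\|A^{S}v\|_{\mathcal{T}}\leq C_{\operatorname*{cont}}C_{\operatorname*{inv}}\,\frac{p}{h_{\operatorname*{c}}}\,\|v\|_{H^{1}(\Omega)}.
\]
A second application of (\ref{globalinvinequ2}), this time to $v$ itself, produces the quadratic bound
$\|A^{S}v\|_{\mathcal{T}}\leq C_{\operatorname*{cont}}C_{\operatorname*{inv}}^{2}(p/h_{\operatorname*{c}})^{2}\|v\|_{\mathcal{T}}$. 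Inequality (\ref{RfASest}) is then immediate from the contractivity of $\Pi_{\operatorname*{f}}^{S}$ noted above.

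Finally, for (\ref{lowevAs}) I would simply combine the coercivity (\ref{wellposedc}) of $a$ with the norm equivalence (\ref{ceqCeq}): for $v\in S$,
\[
(A^{S}v,v)_{\mathcal{T}}=a(v,v)\geq c_{\operatorname*{coer}}\|v\|_{H^{1}(\Omega)}^{2}\geq c_{\operatorname*{coer}}\|v\|^{2}\geq \frac{c_{\operatorname*{coer}}}{C_{\operatorname*{eq}}^{2}}\|v\|_{\mathcal{T}}^{2}.
\]
I do not anticipate a genuine obstacle: the only point requiring minor care is that the inverse inequality (\ref{globalinvinequ2}) is applied to elements of $S$ (which lies in $S_{\mathcal{T}}^{m'}$) rather than of $S_{\mathcal{T}}^{m}$, and that $A^{S}v\in S$ for $v\in S$; both are built into the definitions. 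Everything else is bookkeeping around the diagonal form of $(\cdot,\cdot)_{\mathcal{T}}$.
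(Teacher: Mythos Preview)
Your proposal is correct and follows essentially the same approach as the paper: orthogonality and the properties of $\Pi_{\operatorname*{f}}^{S}$ come directly from the diagonal form of $(\cdot,\cdot)_{\mathcal{T}}$, the bound on $A^{S}$ uses continuity of $a$ together with the global inverse inequality (\ref{globalinvinequ2}) applied first to the test function and then to $v$, and (\ref{lowevAs}) is coercivity plus norm equivalence. The only cosmetic difference is that the paper writes the $A^{S}$ bound via the duality expression $\|A^{S}v\|_{\mathcal{T}}=\sup_{w}a(v,w)/\|w\|_{\mathcal{T}}$, whereas you equivalently pick the specific test function $w=A^{S}v$.
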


\begin{proof}
Orthogonality of the decomposition and the identity in (\ref{Rfest}) directly follow from definitions (\ref{uvtscalarp}), (\ref{utnorm}) of the scalar product $(\cdot,\cdot)_{\mathcal{T}}$ and the corresponding norm.

Using the definition of $A^S$ in (\ref{defAS}) and (\ref{wellposedb}), we then conclude that
\[
\left\Vert A^{S} v \right\Vert_{\mathcal{T}}
= \sup_{w \in S \setminus \left\{ 0 \right\}} \frac{a\left( v,w \right)}{\left\| w \right\|_{\mathcal{T}}}
\overset{(\ref{wellposedb})}{\leq} C_{\operatorname*{cont}} \left\Vert v \right\Vert_{H^{1}\left( \Omega \right)} 
\sup_{w\in S \setminus \left\{ 0 \right\}} \frac{ \left\Vert w \right\Vert_{H^{1}\left( \Omega \right)} }{ \left\Vert w \right\Vert_{\mathcal{T}} }.
\]
From the inverse inequality (\ref{globalinvinequ2}) applied to $w$, we thus obtain%
\[
\left\Vert A^{S}v\right\Vert _{\mathcal{T}}\leq C_{\operatorname*{cont}%
}C_{\operatorname*{inv}}\frac{p}{h_{\operatorname*{c}}}\left\Vert v\right\Vert
_{H^{1}\left(  \Omega\right)  }.
\]
Applying again (\ref{globalinvinequ2}), but now to $v$, yields the second inequality.
Its combination with (\ref{Rfest}) results in (\ref{RfASest}).
Next, since
\[
\left( \Pi_{\operatorname*{f}}^S u, v \right)_{\mathcal{T}}
= \left( u_{\operatorname*{f}}, v_{\operatorname*{c}} + v_{\operatorname*{f}} \right)_{\mathcal{T}}
= \left( u_{\operatorname*{f}}, v_{\operatorname*{f}} \right)_{\mathcal{T}}
= \left( u, \Pi_{\operatorname*{f}}^S v \right)_{\mathcal{T}},
\]
we conclude that $\Pi_{\operatorname*{f}}^S$ is self-adjoint.
Similarly, for any $u \in S$ we have
\[
\left( \Pi_{\operatorname*{f}}^{S} u, u \right)_{\mathcal{T}} 
= \left(u_{\operatorname*{f}}, u_{\operatorname*{c}} + u_{\operatorname*{f}} \right)_{\mathcal{T}} 
= \left( u_{\operatorname*{f}}, u_{\operatorname*{f}} \right) \geq 0,
\]
which implies that $\Pi_{\operatorname*{f}}^S$ is positive semi-definite.

For the lower estimate we employ (\ref{wellposedc}) and (\ref{ceqCeq}) to
obtain%
\[
\left(  A_{S}u,u\right)  _{\mathcal{T}}=a\left(  u,u\right)  \geq
c_{\operatorname*{coer}}\left\Vert u\right\Vert _{H^{1}\left(  \Omega\right)
}^{2}\geq c_{\operatorname*{coer}}\left\Vert u\right\Vert ^{2}\geq
\frac{c_{\operatorname*{coer}}}{C_{\operatorname*{eq}}^{2}}\left\Vert
u\right\Vert _{\mathcal{T}}^{2}.
\]
\end{proof}

\begin{remark}
Lemma \ref{Leminvineq} and \ref{LemEstPif} hold for any $m \geq 1$ with implied constant $C_{\operatorname*{inv}}$, $C_{\operatorname*{inv},\operatorname*{loc}}$, and $C_{\operatorname*{eq}}$. 
Via the inclusion $S\subset S_{\mathcal{T}}^{m^{\prime}}$, these estimates carry over to the finite element space $S$ as in (\ref{defSnew}).
\end{remark}

To derive theoretical properties about the bilinear form $a^{p,\nu}$, we shall require
various technical results about Chebyshev polynomials, some of them 
proved in \cite{CarHocStu19} and further estimates provided in Appendix \ref{AppendixA}. Moreover, we
assume the CFL-type stability condition on the ratio $\Delta t/h_{\operatorname*{c}}$:
\begin{equation}
\left(  3+\frac{C_{\operatorname*{cont}}}{c_{\operatorname*{coer}}}\right) 
C_{\operatorname*{cont}} C_{\operatorname*{inv}}^{2}
\left(  \frac{\Delta t}{h_{\operatorname*{c}}}\right)^{2}  
  \leq\frac{\nu}{\nu+1}, 
\label{CFLtotal}%
\end{equation}
with $\delta_{p,\nu}$, $\omega_{p,\nu}$ as in (\ref{defpolys1}).

\begin{remark}
Some of our theory can be proved under the weaker CFL condition
\begin{equation}
C_{\operatorname*{cont}} C_{\operatorname*{inv}}^{2}
\left( \frac{\Delta t}{h_{\operatorname*{c}}} \right)^{2}  
  \leq\left( 2+\nu/p^{2} \right) \frac{ \omega_{p,\nu} }{ p^{2} }.
\label{condcfl2}
\end{equation}
Indeed, from Lemma \ref{Lembound} it follows that
\begin{equation*}
\left(  2+\nu / p^2 \right)  2p^{2} \operatorname*{e}\nolimits^{-\nu} 
\leq
\left(  2+\nu/p^{2} \right)  \omega_{p,\nu}.
\end{equation*}
Hence, the upper bound in (\ref{condcfl2}) is bounded below independently of $p$ and (\ref{condcfl2}) is therefore implied by the
stronger condition%
\begin{equation}
C_{\operatorname*{cont}}C_{\operatorname*{inv}}^{2}\left(  \frac{\Delta
t}{h_{\operatorname*{c}}}\right)  ^{2}\leq4\operatorname*{e}\nolimits^{-\nu}. \label{CFLtotalc}%
\end{equation}
Since 
\[
\frac{\nu}{\nu+1} \leq 4 \operatorname*{e}\nolimits^{-\nu}
\]
for $0 \leq \nu \leq 1/2$, (\ref{CFLtotal}) always implies (\ref{CFLtotalc}) and hence (\ref{condcfl2}).
Both (\ref{condcfl2}) and (\ref{CFLtotal}) thus correspond to $p$ independent CFL stability conditions on $\Delta t$, with (\ref{condcfl2}) implied by (\ref{CFLtotal}).
In fact, the \emph{proof} of our current stability analysis deteriorates in the limit $\nu \to 0$. 
However, this limit is of no practical interest, in particular, since numerical experiments show that the stabilized method behaves in a robust manner with respect to small but fixed, postitive values of $\nu$.
Note also that our method for the case $\nu=0$ results in the original LF-LTS method, which is stable albeit under the $p$-dependent CFL condition presented in \cite{grote_sauter_1}.
\end{remark}

\begin{theorem}
\label{TheoCont} 
Let (\ref{wellposed}), (\ref{hsmallerh0}), (\ref{ceqCeq}), and (\ref{hfphc}) be satisfied. 
Assume that the \emph{CFL condition}  (\ref{condcfl2}) holds. 
Then, {the bilinear form $a^{p,\nu}\left( \cdot,\cdot \right)$ is continuous,
\[
\left\vert a^{p,\nu}\left(  u,v\right)  \right\vert \leq
C_{\operatorname*{cont}} 
\left\Vert u\right\Vert _{H^{1}\left(  \Omega\right)  }\left\Vert v\right\Vert
_{H^{1}\left(  \Omega\right)  } \quad\forall u,v\in S
\]
and symmetric.}
Moreover, it satisfies the coercivity estimate%
\[
a^{p,\nu}\left(  u,u\right)  \geq\frac{c_{\operatorname*{coer}}}{p^{2}}%
\frac{\nu}{\left(  2+\nu\right)  ^{2}}\left\Vert u\right\Vert _{H^{1}\left(
\Omega\right)  }^{2}\quad\forall u\in S\text{.}%
\]

\end{theorem}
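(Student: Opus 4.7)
The plan is to reformulate everything in terms of the operator $A^{S,p,\nu}$ using its symmetrized representation from (\ref{defASp}). Set $B := (A^S)^{1/2}\Pi_f^S (A^S)^{1/2}$ and write
\[
a^{p,\nu}(u,v) = (A^{S,p,\nu}u,v)_{\mathcal{T}} = \bigl(P_{p,\nu}^{\Delta t}(B)(A^S)^{1/2}u,\,(A^S)^{1/2}v\bigr)_{\mathcal{T}}.
\]
The operator $A^S$ is self-adjoint and positive definite with respect to $(\cdot,\cdot)_{\mathcal{T}}$ by symmetry of $a$ and (\ref{lowevAs}), so $(A^S)^{1/2}$ is well-defined. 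Since $\Pi_f^S$ is self-adjoint and positive semi-definite by Lemma \ref{LemEstPif}, so is $B$, and in particular the spectrum of $B$ lies in $[0,\lambda_{\max}(B)]$. Using positive semi-definiteness of $\Pi_f^S$ (so that $(\Pi_f^S w,w)_{\mathcal{T}} \leq (w,w)_{\mathcal{T}}$) together with Lemma \ref{LemEstPif} gives
\[
\lambda_{\max}(B) \leq \|A^S\|_{\mathcal{T}\to\mathcal{T}} \leq C_{\operatorname*{cont}}C_{\operatorname*{inv}}^{2}(p/h_{\operatorname*{c}})^{2},
\]
hence by CFL condition (\ref{condcfl2}) the relevant arguments $\Delta t^{2}\lambda$ lie in $[0,(2+\nu/p^{2})\omega_{p,\nu}]$. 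This is the interval on which the behavior of $P_{p,\nu}$ has been analyzed.

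Symmetry is then immediate: $B$ is self-adjoint w.r.t. $(\cdot,\cdot)_{\mathcal{T}}$, so any polynomial $P_{p,\nu}^{\Delta t}(B)$ is too, and since $(A^S)^{1/2}$ is self-adjoint, $A^{S,p,\nu}$ is self-adjoint, which translates to $a^{p,\nu}(u,v)=a^{p,\nu}(v,u)$.

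For continuity I would invoke the bound $|P_{p,\nu}(x)|\leq x$ (equivalently $|P_{p,\nu}^{\Delta t}(\lambda)|\leq 1$) for $x$ in the admissible interval; this is exactly the kind of uniform bound on the stabilized Chebyshev polynomial drawn as the dashed envelope in Fig.\ \ref{FigPpnue} and proved in Lemma \ref{LemPpnue} / Appendix \ref{AppendixA}. By the functional calculus for self-adjoint $B$, the operator norm of $P_{p,\nu}^{\Delta t}(B)$ w.r.t.\ $(\cdot,\cdot)_{\mathcal{T}}$ is then at most $1$, so Cauchy--Schwarz yields
\[
|a^{p,\nu}(u,v)| \leq \|(A^S)^{1/2}u\|_{\mathcal{T}}\|(A^S)^{1/2}v\|_{\mathcal{T}} = \sqrt{a(u,u)\,a(v,v)} \leq C_{\operatorname*{cont}}\|u\|_{H^1}\|v\|_{H^1}
\]
via (\ref{wellposedb}).

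For coercivity the identical functional-calculus argument reduces the claim to a pointwise lower bound
\[
P_{p,\nu}^{\Delta t}(\lambda) \geq \frac{1}{p^2}\frac{\nu}{(2+\nu)^2} \qquad \forall\lambda\in[0,\lambda_{\max}(B)],
\]
after which
\[
a^{p,\nu}(u,u) \geq \frac{1}{p^2}\frac{\nu}{(2+\nu)^2}\,(A^S u,u)_{\mathcal{T}} = \frac{1}{p^2}\frac{\nu}{(2+\nu)^2}\,a(u,u) \geq \frac{c_{\operatorname*{coer}}}{p^2}\frac{\nu}{(2+\nu)^2}\|u\|_{H^1}^2.
\]
The main obstacle is exactly this positive lower bound on $P_{p,\nu}^{\Delta t}$: one needs to show that on the interval $x\in[0,(2+\nu/p^2)\omega_{p,\nu}]$ the ratio $P_{p,\nu}(x)/x$ stays bounded away from zero by $(2+\nu)^{-2}\nu/p^2$. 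This requires genuine properties of the shifted/rescaled Chebyshev polynomials $T_p$ at $\delta_{p,\nu}=1+\nu/p^2$, in particular that $T_p(\delta_{p,\nu})$ grows sufficiently and that $P_{p,\nu}$ does not vanish on this interval — the very feature that motivates the stabilization $\nu>0$ and fails at $\nu=0$. I would pull the needed quantitative statement (most likely an explicit form of Lemma \ref{LemPpnue}) from Appendix \ref{AppendixA}, where the machinery parallels the stabilized-LFC analysis of \cite{CarHocStu19}, and then combine it with the two short functional-calculus arguments above to conclude.
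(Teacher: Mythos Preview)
Your proposal is correct and follows essentially the same route as the paper: rewrite $a^{p,\nu}$ via the symmetrized form $(A^S)^{1/2}P_{p,\nu}^{\Delta t}(B)(A^S)^{1/2}$ with $B=(A^S)^{1/2}\Pi_{\operatorname*{f}}^S(A^S)^{1/2}$, bound the spectrum of $B$ via Lemma~\ref{LemEstPif} and the CFL condition~(\ref{condcfl2}), and then reduce continuity and coercivity to the pointwise bounds on $P_{p,\nu}(x)/x$ together with $\|(A^S)^{1/2}u\|_{\mathcal{T}}^2=a(u,u)$. The only minor correction is that the two needed pointwise bounds, $|P_{p,\nu}(x)/x|\le 1$ and $P_{p,\nu}(x)/x\ge \nu/((2+\nu)^2 p^2)$, are the content of Lemma~\ref{LemEstPpnue} (equations~(\ref{Pdeltatesta}) and~(\ref{Pdeltatestb})), not Lemma~\ref{LemPpnue}.
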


\begin{proof}
\textbf{Continuity: }
{If $p=1$, the two bilinear forms $a^{p,\nu}\left(
\cdot,\cdot\right)  $ and $a\left(  \cdot,\cdot\right)  $ coincide and the
result trivially follows. 
Thus, we now assume that $p\geq2$. }Let $u,v\in S$.
By definition of $a^{p,\nu}$ and (\ref{defASp}) we have
\[
a^{p,\nu}\left(  u,v\right)  =\left(  \left(  A^{S}\right)  ^{1/2}P_{p,\nu
}^{\Delta t}\left(  \left(  A^{S}\right)  ^{1/2}\Pi_{\operatorname*{f}}%
^{S}\left(  A^{S}\right)  ^{1/2}\right)  \left(  A^{S}\right)  ^{1/2}%
u,v\right)  _{\mathcal{T}}.
\]
Now, let $\check{u}:=\left(  A^{S}\right)  ^{1/2}u$ and $\check{v}:=\left( A^{S}\right)  ^{1/2}v$. 
By Lemma \ref{LemEstPif}, $\left( A^S \right)^{1/2} \Pi_{\operatorname*{f}}^S \left( A^S \right)^{1/2}$ is self-adjoint with respect to $\left(\cdot,\cdot\right)_{\mathcal{T}}$ and positive semi-definite, with real valued, non-negative spectrum $\sigma\left(  \left(  A^{S}\right)  ^{1/2}\Pi_{\operatorname*{f}}^{S}\left(  A^{S}\right)  ^{1/2}\right)$. 
Hence, $\lambda\in\sigma\left( \left(  A^{S}\right)^{1/2} \Pi_{\operatorname*{f}}^{S} \left(  A^{S}\right)^{1/2}\right)$, 
if there exists $w_{\lambda}\in S \setminus \left\{0\right\}$ such that%
\[
\left(  \left(  A^{S}\right)  ^{1/2}\Pi_{\operatorname*{f}}^{S}\left(
A^{S}\right)  ^{1/2}w_{\lambda},v\right)  _{\mathcal{T}}=\lambda\left(
w_{\lambda},v\right)  _{\mathcal{T}}\quad\forall v\in S.
\]
Then
\begin{equation}
a^{p,\nu}\left(  u,v\right)  =\left(  P_{p,\nu}^{\Delta t}\left(  \left(
A^{S}\right)  ^{1/2}\Pi_{\operatorname*{f}}^{S}\left(  A^{S}\right)
^{1/2}\right)  \check{u},\check{v}\right)  _{\mathcal{T}}\leq\rho_{p,\nu
}\left\Vert \check{u}\right\Vert _{\mathcal{T}}\left\Vert \check{v}\right\Vert
_{\mathcal{T}} \label{acontH1est}%
\end{equation}
for%
\[
\rho_{p,\nu}=\sup_{\lambda\in\sigma\left(  \left(  A^{S}\right)  ^{1/2}%
\Pi_{\operatorname*{f}}^{S}\left(  A^{S}\right)  ^{1/2}\right)  }\left\vert
P_{p,\nu}^{\Delta t}\left(  \lambda\right)  \right\vert.
\]
Since by Lemma \ref{LemEstPif}, the maximal eigenvalue
$\lambda_{\max}$ satisfies%
\[
\lambda_{\max}\leq C_{\operatorname*{cont}}C_{\operatorname*{inv}}^{2}\left(
\frac{p}{h_{\operatorname*{c}}}\right)  ^{2},
\]
we conclude from CFL condition (\ref{condcfl2}) that%
\begin{align*}
\rho_{p,\nu} &\leq \sup_{0 \leq \lambda \leq \lambda_{\max}} \left| P_{p,\nu}^{\Delta t} \left(\lambda\right) \right| 
= \sup_{0 \leq x \leq \Delta t^2 \lambda_{\max}} \left| \frac{P_{p,\nu}\left(  x\right)  }{x} \right| \\
&\overset{(\ref{condcfl2})}{\leq} \sup_{0\leq x\leq \left(1+\delta_{p,\nu}\right)\, \omega_{p,\nu}}\left\vert \frac{P_{p,\nu}\left(  x\right)  }{x}\right\vert
\leq \sup_{0\leq x\leq2\delta_{p,\nu}\omega_{p,\nu}}\left\vert \frac{P_{p,\nu}\left(  x\right)  }{x}\right\vert .
\end{align*}
We use Lemma \ref{LemEstPpnue} to get%
\[
\left\vert \frac{P_{p,\nu}\left(  x\right)  }{x}\right\vert \leq
1 
\quad\forall x\in\left(  0,2\delta_{p,\nu
}\omega_{p,\nu}\right)  .
\]

Inserting this upper bound into (\ref{acontH1est}) results in%
\[
a^{p,\nu}\left(  u,v\right)  \leq 
\left\Vert \check{u}\right\Vert _{\mathcal{T}}\left\Vert \check{v}\right\Vert
_{\mathcal{T}}.
\]
Furthermore, we have%
\begin{equation}
\left\Vert \check{u}\right\Vert _{\mathcal{T}}^{2}=\left\Vert \left(
A^{S}\right)  ^{1/2}u\right\Vert _{\mathcal{T}}^{2}=\left(  A^{S}u,u\right)
_{\mathcal{T}}=a\left(  u,u\right)  \leq C_{\operatorname*{cont}}\left\Vert
u\right\Vert _{H^{1}\left(  \Omega\right)  }^{2} \label{ucheckest2}%
\end{equation}
and end up with the continuity estimate%
\[
a^{p,\nu}\left(  u,v\right)  \leq C_{\operatorname*{cont}} 
\left\Vert u\right\Vert _{H^{1}\left(  \Omega\right)
}\left\Vert v\right\Vert _{H^{1}\left(  \Omega\right)  }.
\]

\textbf{Symmetry. }Since $A^{S}$ and $\Pi
_{\operatorname*{f}}^S$ are both self-adjoint with respect to the $\left(
\cdot,\cdot\right)  _{\mathcal{T}}$ scalar product, the symmetry immediately follows.

\textbf{Coercivity}. Let $u\in S$ and set again $\check{u}:=\left(
A^{S}\right)  ^{1/2}u$. Then%
\[
a^{p,\nu}\left(  u,u\right)  =\left(  P_{p,\nu}^{\Delta t}\left(  \left(
A^{S}\right)  ^{1/2}\Pi_{\operatorname*{f}}^{S}\left(  A^{S}\right)
^{1/2}\right)  \check{u},\check{u}\right)  _{\mathcal{T}}\geq\mu_{p,\nu
}\left\Vert \check{u}\right\Vert _{\mathcal{T}}^{2}%
\]
for%
\[
\mu_{p,\nu}=\inf_{\lambda\in\sigma\left(  \left(  A^{S}\right)  ^{1/2}%
\Pi_{\operatorname*{f}}^{S}\left(  A^{S}\right)  ^{1/2}\right)  }\left\vert
P_{p,\nu}^{\Delta t}\left(  \lambda\right)  \right\vert .
\]

From a similar argument as for the continuity estimate, we obtain%
\[
\mu_{p,\nu}\geq\inf_{0\leq x\leq\left(  1 + \delta_{p,\nu} \right)  \omega_{p,\nu}%
}\left\vert \frac{P_{p,\nu}\left(  x\right)  }{x}\right\vert .
\]
Now, employ Lemma \ref{LemEstPpnue} to obtain the lower bound%
\[
\mu_{p,\nu}\geq\frac{\nu}{\left(  2+\nu\right)  ^{2}p^{2}}.
\]
As in (\ref{ucheckest2}) we obtain $\left\Vert \check{u}\right\Vert
_{\mathcal{T}}^{2}\geq c_{\operatorname*{coer}}\left\Vert u\right\Vert
_{H^{1}\left(  \Omega\right)  }^{2}$ and, in turn,%
\[
a^{p,\nu}\left(  u,u\right)  =\left(  P_{p,\nu}^{\Delta t}\left(  \left(
A^{S}\right)  ^{1/2}\Pi_{\operatorname*{f}}^{S}\left(  A^{S}\right)
^{1/2}\right)  \check{u},\check{u}\right)  _{\mathcal{T}}\geq\frac
{c_{\operatorname*{coer}}}{p^{2}}\frac{\nu}{\left(  2+\nu\right)  ^{2}%
}\left\Vert u\right\Vert _{H^{1}\left(  \Omega\right)  }^{2}.
\]

\end{proof}

For the convergence proof in Section 3.2, the norm of $A^{S,p,\nu}$ and its inverse (as an operator in $L^2$) will play a key role.  Both can be expressed in terms of the largest and smallest eigenvalue of $A^{S,p,\nu}$. 
Moreover, the estimate of the inverse of $A^{S,p,\nu}$ must also take into account its specific coarse/fine block structure. As it is based on a Schur complement representation of the inverse, it also involves the difference $(A^{S,p,\nu})^{-1}-(A^S)^{-1}$.

To estimate the extremal eigenvalues
 of the operator $A^{S,p,\nu}$, we first rewrite it in block form.
Hence, we first introduce some notation and let $V_{i}$ be a finite-dimensional Hilbert space with scalar product $\left( \cdot,\cdot \right)_{i}$ and norm $\left( \cdot,\cdot \right)_{i}^{1/2}$, $i = 1,2$. 
We denote by $I_{i}:V_{i}\rightarrow V_{i}$ the identity operator. 
The adjoint of an operator $K: V_{i} \rightarrow V_{j}$ with respect to the scalar product $\left( \cdot,\cdot \right)_{j}$ is denoted by $K^{\ast}: V_{j} \rightarrow V_{i}$ and characterized by%
\[
\left(  Ku,w\right)  _{j}=\left(  u,K^{\ast}w\right)  _{i}\quad\forall u\in
V_{i}\quad\forall w\in V_{j}.
\]
Recall that $K:V_{i}\rightarrow V_{i}$ is self-adjoint if $K=K^{\ast}$ and positive
definite if%
\begin{equation}
K\text{ is self-adjoint and }\left(  Ku,u\right)  _{i}>0\quad\forall u\in
V_{i}\backslash\left\{  0\right\}  \text{.}\label{posdef}%
\end{equation}
If $K$ satisfies (\ref{posdef}) with \textquotedblleft$>$\textquotedblright\ replaced by \textquotedblleft$\geq$\textquotedblright , $K$ is positive semi-definite.

For any $\delta \in \mathbb{R}$, we shall write $K>\delta I$ and $K\geq\delta I$, if $K-\delta I$ is positive definite or positive semi-definite, respectively.
For criteria and rules in the calculus of positive-definite operators, we refer, e.g., to \cite[\S \ C.1]{hackitengl2nd}.

Next, we consider $2\times2$ block operators on $V_1 \times V_2$ and denote the induced scalar product by%
\begin{equation}
\left[  u,v\right]  =\left(  u_{1},v_{1}\right)  _{1}+\left(  u_{2}%
,v_{2}\right)  _{2}\quad\forall u=\binom{u_{1}}{u_{2}},v=\binom{v_{1}}{v_{2}%
}\in V_{1}\times V_{2}. 
\label{addscprod}%
\end{equation}

Let $B_{i,j}:V_{j}\rightarrow V_{i}$ be bounded linear operators which form
the blocks of the operator $\mathbb{B}:V_{1}\times V_{2}\rightarrow V_{1}\times V_{2}$ via%
\[
\mathbb{B}=\left[
\begin{array}[c]{cc}%
B_{1,1} & B_{1,2}\\
B_{2,1} & B_{2,2}%
\end{array}
\right]  .
\]
Then, we define (formally) the Schur complement of $\mathbb{B}$ as 
\[
\mathfrak{S}
:=B_{1,1}-B_{1,2}B_{2,2}^{-1}B_{2,1},
\]
which is regular if both $\mathbb{B}$ and $B_{2,2}$ are regular \cite[Remark C.63]{hackitengl2nd}. Then, the inverse of $\mathbb{B}$ can be written as
\begin{align}
\mathbb{B}^{-1}  &  =\left[
\begin{array}[c]{cc}%
\mathfrak{S}^{-1} & -\mathfrak{S}^{-1} B_{1,2} B_{2,2}^{-1}\\
-B_{2,2}^{-1}B_{2,1} \mathfrak{S}^{-1} 
& B_{2,2}^{-1}+B_{2,2}^{-1}%
B_{2,1} \mathfrak{S}^{-1} B_{1,2} B_{2,2}^{-1}%
\end{array}
\right] \nonumber\\
&  =\left[
\begin{array}[c]{rr}%
I_{1} & 0\\
-B_{2,2}^{-1}B_{2,1} & I_{2}%
\end{array}
\right]  
\left[
\begin{array}[c]{cc}%
\mathfrak{S}^{-1} & 0\\
0 & B_{2,2}^{-1}%
\end{array}
\right]  
\left[
\begin{array}[c]{rr}%
I_{1} & -B_{1,2}B_{2,2}^{-1}\\
0 & I_{2}%
\end{array}
\right]  . \label{LDLT2}%
\end{align}

Using the notation introduced above, we now write the operators $A^{S}$ and $A^{S,p,\nu}$ in block form. 
Since $ S = S_{\operatorname*{c}} \oplus S_{\operatorname*{f}} $, we can define local versions of $A^{S}$ restricted to either the ``coarse'' or ``fine'' subspace for $s,t\in \left\{ \operatorname*{c},\operatorname*{f} \right\}$ by%
\[
A_{s\,t}^{S}: S_{t}\rightarrow S_{s},
\qquad \left(  A_{s\,t}^{S} u_{t},v_{s} \right)_{\mathcal{T}} = a\left(  u_{t},v_{s} \right)  
\qquad\forall u_{t}\in S_{t}
\text{,\quad}u_{s}\in S_{s},
\]
which yields the block operator 
$\mathbb{A}^{S}: S_{\operatorname*{c}} \times S_{\operatorname*{f}}\rightarrow S_{\operatorname*{c}}\times S_{\operatorname*{f}}$, 
\[
\mathbb{A}^{S}:=\left[
\begin{array}[c]{rr}%
A_{\operatorname*{c}\operatorname*{c}}^{S} & A_{\operatorname*{c}\operatorname*{f}}^{S}\\
A_{\operatorname*{f}\operatorname*{c}}^{S} & A_{\operatorname*{f}\operatorname*{f}}^{S}%
\end{array}
\right]  .
\]
We also equip $S_{\operatorname*{c}} \times S_{\operatorname*{f}}$ with a scalar product as in (\ref{addscprod}): 
\begin{equation}
\left[  u,v\right]  _{\mathcal{T}}
:=\left(  u_{\operatorname*{c}} ,v_{\operatorname*{c}} \right)_{\mathcal{T}}
+\left(  u_{\operatorname*{f}},v_{\operatorname*{f}}\right)_{\mathcal{T}},
\qquad u = \binom{u_{\operatorname*{c}}}{u_{\operatorname*{f}}}, 
v = \binom{v_{\operatorname*{c}}}{v_{\operatorname*{f}}}
\in S_{\operatorname*{c}} \times S_{\operatorname*{f}}.
 \label{defblockscprod}%
\end{equation}
Since $A_{\operatorname*{c}\operatorname*{f}}^{S} = \left( A_{\operatorname*{f}\operatorname*{c}}^{S} \right)^{\ast}$ and both $A_{\operatorname*{c}\operatorname*{c}}^{S}$ and $A_{\operatorname*{f}\operatorname*{f}}^{S}$ are self-adjoint with respect to $(\cdot,\cdot)_{\mathcal{T}}$, 
$\mathbb{A}^{S}$ is self-adjoint with respect to $\left[\cdot,\cdot\right]_{\mathcal{T}}$.
These block operators can also be written in terms of the projections $\Pi^{S}_{\operatorname*{c}}$ and $\Pi^{S}_{\operatorname*{f}}$ from (\ref{defPifPic}) as%
\[
A_{s\,t}^{S} = \Pi^{S}_{s}A^{S}\Pi^{S}_{t}.
\]
Similarly, we associate to $A^{S,p,\nu}$ in (\ref{defASp}) the block operator
\begin{equation}
\mathbb{A}^{S,p,\nu} :=
\left[
\begin{array}[c]{cc}%
A_{\operatorname*{c}\operatorname*{c}}^{S,p,\nu} & A_{\operatorname*{c}\operatorname*{f}}^{S,p,\nu}\\
A_{\operatorname*{f}\operatorname*{c}}^{S,p,\nu} & A_{\operatorname*{f}\operatorname*{f}}^{S,p,\nu}%
\end{array}
\right]. 
\label{defAblackboard}%
\end{equation}

We shall now compute the blocks of $\mathbb{A}^{S,p,\nu}$ explicitly. 
First, we associate to $\Pi_{\operatorname*{f}}^{S}$ the block operator
\begin{equation*}
\mathbb{R}_{\operatorname*{f}}:=
\left[
\begin{array}[c]{rr}%
0 & 0\\
0 & I_{\operatorname*{f}\operatorname*{f}}%
\end{array}
\right],
\end{equation*}
with $I_{\operatorname*{f}\operatorname*{f}}: S_{\operatorname*{f}} \rightarrow S_{\operatorname*{f}}$ the identity and recall from (\ref{defASp}) that
\[
\mathbb{A}^{S,p,\nu} = \mathbb{A}^{S} - \mathbb{A}^{S}  Q_{p,\nu}^{\Delta t} \left( \mathbb{R}_{\operatorname*{f}} \mathbb{A}^{S} \right).
\]
For $j\geq1$, we easily verify by direct computation that
\[
\left(  \mathbb{A}^{S}\mathbb{R}_{\operatorname*{f}}\right)  ^{j}=\left[
\begin{array}
[c]{cc}%
0 & A_{\operatorname*{c}\operatorname*{f}}^{S}\left(  A_{\operatorname*{f}%
\operatorname*{f}}^{S}\right)  ^{j-1}\\
0 & \left(  A_{\operatorname*{f}\operatorname*{f}}^{S}\right)  ^{j}%
\end{array}
\right]  
\]
and therefore
\begin{equation}
\mathbb{A}^{S} \left( \mathbb{R}_{\operatorname*{f}} \mathbb{A}^{S} \right)  ^{j} =
\left(  \mathbb{A}^{S}\mathbb{R}_{\operatorname*{f}}\right)  ^{j}\mathbb{A}^{S}=
\left[
\begin{array}[c]{cc}%
A_{\operatorname*{c}\operatorname*{f}}^{S} \left(  A_{\operatorname*{f} \operatorname*{f}}^{S}\right)  ^{j-1} A_{\operatorname*{f}\operatorname*{c}}^{S} 
& A_{\operatorname*{c}\operatorname*{f}}^{S}\left(  A_{\operatorname*{f} \operatorname*{f}}^{S}\right)  ^{j} \\
\left(  A_{\operatorname*{f}\operatorname*{f}}^{S} \right)^{j} A_{\operatorname*{f}\operatorname*{c}}^{S} 
& \left(  A_{\operatorname*{f}\operatorname*{f}}^{S}\right)  ^{j+1}
\end{array}
\right]  .
\label{APjA}
\end{equation}
From (\ref{defQpnu}) and (\ref{APjA}), we thus obtain
\begin{align*}
&\mathbb{A}^{S}  Q_{p,\nu}^{\Delta t} \left( \mathbb{R}_{\operatorname*{f}} \mathbb{A}^{S} \right)
= \frac{2}{p^2} \sum\limits_{j=1}^{p-1} \alpha_{j}^{p,\nu} \left( \frac{\Delta t}{p} \right)^{2 j} \mathbb{A}^{S} \left( \mathbb{R}_{\operatorname*{f}} \mathbb{A}^{S} \right)^{j} \\
&= \frac{2}{p^2} \sum\limits_{j=1}^{p-1} \alpha_{j}^{p,\nu} \left( \frac{\Delta t}{p} \right)^{2 j}
\left[
\begin{array}[c]{cc}%
A_{\operatorname*{c}\operatorname*{f}}^{S} \left(  A_{\operatorname*{f} \operatorname*{f}}^{S}\right)  ^{j-1} A_{\operatorname*{f}\operatorname*{c}}^{S} & 
A_{\operatorname*{c}\operatorname*{f}}^{S}\left(  A_{\operatorname*{f} \operatorname*{f}}^{S}\right)  ^{j} \\
\left(  A_{\operatorname*{f}\operatorname*{f}}^{S} \right)^{j} A_{\operatorname*{f}\operatorname*{c}}^{S} & 
\left(  A_{\operatorname*{f}\operatorname*{f}}^{S}\right)  ^{j+1}
\end{array}
\right] \\
& = \left[
\begin{array}[c]{@{} c @{} c @{}}
A_{\operatorname*{c} \operatorname*{f}}^{S} \left(  A_{\operatorname*{f} \operatorname*{f}}^{S} \right)^{-1/2} & 0 \\
0 & \left(  A_{\operatorname*{f} \operatorname*{f}}^{S} \right)^{1/2}
\end{array}
\right]  
\left[
\begin{array}[c]{cc}
Q_{p,\nu}^{\Delta t}\left(  A_{\operatorname*{f}\operatorname*{f}}^{S}\right)
& Q_{p,\nu}^{\Delta t}\left(  A_{\operatorname*{f}\operatorname*{f}}^{S}\right) \\
Q_{p,\nu}^{\Delta t}\left(  A_{\operatorname*{f}\operatorname*{f}}^{S}\right)
& Q_{p,\nu}^{\Delta t}\left(  A_{\operatorname*{f}\operatorname*{f}}^{S}\right)
\end{array}
\right]  
\left[
\begin{array}[c]{@{} c @{} c @{}}
\left(  A_{\operatorname*{f} \operatorname*{f}}^{S} \right)^{-1/2} A_{\operatorname*{f}\operatorname*{c}}^{S} & 0\\
0 & \left(  A_{\operatorname*{f}\operatorname*{f}}^{S}\right)  ^{1/2}%
\end{array}
\right]  , 
\end{align*}
which yields the explicit block representation of $\mathbb{A}^{S,p,\nu}$:%
\begin{equation}
\begin{aligned}
&\mathbb{A}^{S,p,\nu}= \\
&\quad\left[
\begin{array}[c]{@{} c @{} c @{}}
A_{\operatorname*{c}\operatorname*{c}}^{S} - A_{\operatorname*{c} \operatorname*{f}}^{S} \left(  A_{\operatorname*{f}\operatorname*{f}}^{S}\right)^{-1/2} Q_{p,\nu}^{\Delta t} \left(  A_{\operatorname*{f} \operatorname*{f}}^{S} \right)  \left(  A_{\operatorname*{f}\operatorname*{f}}^{S}\right)^{-1/2} A_{\operatorname*{f} \operatorname*{c}}^{S} 
&
A_{\operatorname*{c}\operatorname*{f}}^{S} P_{p,\nu}^{\Delta t} \left( A_{\operatorname*{f}\operatorname*{f}}^{S} \right) \\
P_{p,\nu}^{\Delta t}\left(  A_{\operatorname*{f}\operatorname*{f}}^{S}\right)
A_{\operatorname*{f}\operatorname*{c}}^{S} 
& 
\left(  A_{\operatorname*{f} \operatorname*{f}}^{S}\right)^{1/2} P_{p,\nu}^{\Delta t} \left( A_{\operatorname*{f} \operatorname*{f}}^{S} \right)  \left( A_{\operatorname*{f}\operatorname*{f}}^{S}\right)^{1/2}%
\end{array}
\right]  .
\end{aligned}
\label{Apvblackboard}
\end{equation}
Note that both $A_{\operatorname*{f}\operatorname*{f}}^S$ and $A_{\operatorname*{f} \operatorname*{f}}^{S,p,\nu}$ are self-adjoint and coercive because of (\ref{wellposedc}) and Theorem \ref{TheoCont}.
Since%
\begin{align*}
-A_{\operatorname*{c} \operatorname*{f}}^{S,p,\nu} \left(  A_{\operatorname*{f} \operatorname*{f}}^{S,p,\nu} \right)^{-1} 
&= -A_{\operatorname*{c} \operatorname*{f}}^{S} P_{p,\nu}^{\Delta t} \left( A_{\operatorname*{f} \operatorname*{f}}^{S}\right)  \left(  \left(  A_{\operatorname*{f} \operatorname*{f}}^{S} \right)^{1/2}P_{p,\nu}^{\Delta t} \left( A_{\operatorname*{f} \operatorname*{f}}^{S} \right)  \left( A_{\operatorname*{f} \operatorname*{f}}^{S} \right)^{1/2} \right)^{-1} \\
&= -A_{\operatorname*{c}\operatorname*{f}}^{S} \left( A_{\operatorname*{f} \operatorname*{f}}^{S} \right)^{-1},
\end{align*}
the $LDL^{\ast}$ factorization of $\left(
\mathbb{A}^{S,p,\nu}\right)  ^{-1}-\left(  \mathbb{A}^{S}\right)  ^{-1}$
according to (\ref{LDLT2}) is given by%
\begin{align*}
&  \left(  \mathbb{A}^{S,p,\nu}\right)  ^{-1}-\left(  \mathbb{A}^{S}\right)
^{-1} = \\
&  \left[
\begin{array}[c]{@{} c c @{}}%
I_{\operatorname*{c}\operatorname*{c}} & 0\\
- \left( A_{\operatorname*{f}%
\operatorname*{f}}^{S} \right)^{-1} A_{\operatorname*{f}%
\operatorname*{c}}^{S} & I_{\operatorname*{f}\operatorname*{f}}%
\end{array}
\right]  \left[
\begin{array}[c]{@{} c @{} c @{}}%
\left(\mathfrak{S}^{S,p,\nu}\right)^{-1} - \left(\mathfrak{S}^{S}\right)^{-1} & 0\\
0 & \left(  A_{\operatorname*{f}\operatorname*{f}}^{S,p,\nu}\right)
^{-1}-\left(  A_{\operatorname*{f}\operatorname*{f}}^{S}\right)  ^{-1}%
\end{array}
\right]  \left[
\begin{array}[c]{@{} c c @{}}%
I_{\operatorname*{c}\operatorname*{c}} & -A_{\operatorname*{c}%
\operatorname*{f}}^{S} \left( A_{\operatorname*{f}%
\operatorname*{f}}^{S} \right)^{-1}\\
0 & I_{\operatorname*{f}\operatorname*{f}}%
\end{array}
\right]  ,
\end{align*}
where $\mathfrak{S}^{S,p,\nu}$ and $\mathfrak{S}^{S}$ denotes the Schur complements of $\mathbb{A}^{S,p,\nu}$ and $\mathbb{A}^{S}$, respectively. 
A simple calculation shows that $\mathfrak{S}^{S,p,\nu} = \mathfrak{S}^{S}$ and we end up with%
\begin{equation}
\left(  \mathbb{A}^{S,p,\nu}\right)  ^{-1}-\left(  \mathbb{A}^{S}\right)
^{-1}=\left[
\begin{array}[c]{cc}%
0 & 0\\
0 & \left(  A_{\operatorname*{f}\operatorname*{f}}^{S,p,\nu}\right)
^{-1}-\left(  A_{\operatorname*{f}\operatorname*{f}}^{S}\right)  ^{-1}%
\end{array}
\right]  . \label{ablockdiff}%
\end{equation}

\begin{lemma}
\label{LemInvDiff}Let (\ref{wellposedb}), (\ref{wellposedc}),
(\ref{hsmallerh0}), (\ref{ceqCeq}), and (\ref{hfphc}) be satisfied. 
Assume that the \emph{CFL condition} (\ref{condcfl2}) holds. Then%
\[
\left(  \left(  \left(  A^{S,p,\nu}\right)  ^{-1}-\left(  A^{S}\right)
^{-1}\right)  u,v\right)  _{\mathcal{T}}\leq\frac{\nu+1}{2 \nu}\Delta
t^{2}\left\Vert u\right\Vert _{\mathcal{T}}\left\Vert v\right\Vert
_{\mathcal{T}}\quad\forall u,v\in S.
\]

\end{lemma}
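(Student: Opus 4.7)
The plan is to reduce the claim to a scalar estimate on the univariate polynomial $P_{p,\nu}$ via functional calculus, leveraging the block-diagonal structure already exposed in (\ref{ablockdiff}).

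First, I would use (\ref{ablockdiff}): the difference $(\mathbb{A}^{S,p,\nu})^{-1}-(\mathbb{A}^{S})^{-1}$ vanishes on everything but the fine-fine component. Splitting $u = u_{\operatorname*{c}}+u_{\operatorname*{f}}$ and $v = v_{\operatorname*{c}}+v_{\operatorname*{f}}$ according to the direct sum $S = S_{\operatorname*{c}}\oplus S_{\operatorname*{f}}$ and using the $(\cdot,\cdot)_{\mathcal{T}}$-orthogonality of this splitting (Lemma \ref{LemEstPif}), the bilinear expression collapses to
\[
\left(\bigl((A^{S,p,\nu})^{-1}-(A^{S})^{-1}\bigr)u,v\right)_{\mathcal{T}}
=\left(\bigl((A_{\operatorname*{ff}}^{S,p,\nu})^{-1}-(A_{\operatorname*{ff}}^{S})^{-1}\bigr)u_{\operatorname*{f}},v_{\operatorname*{f}}\right)_{\mathcal{T}},
\]
and Cauchy--Schwarz combined with $\|u_{\operatorname*{f}}\|_{\mathcal{T}}\leq\|u\|_{\mathcal{T}}$ (likewise for $v$) reduces everything to a spectral bound on the operator norm of $(A_{\operatorname*{ff}}^{S,p,\nu})^{-1}-(A_{\operatorname*{ff}}^{S})^{-1}$ on $S_{\operatorname*{f}}$.

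Second, from the fine-fine entry of (\ref{Apvblackboard}) one reads off the factorization $A_{\operatorname*{ff}}^{S,p,\nu}=(A_{\operatorname*{ff}}^{S})^{1/2}P_{p,\nu}^{\Delta t}(A_{\operatorname*{ff}}^{S})(A_{\operatorname*{ff}}^{S})^{1/2}$. Since $A_{\operatorname*{ff}}^{S}$ is self-adjoint and coercive with respect to $(\cdot,\cdot)_{\mathcal{T}}$, functional calculus gives
\[
(A_{\operatorname*{ff}}^{S,p,\nu})^{-1}-(A_{\operatorname*{ff}}^{S})^{-1}=f(A_{\operatorname*{ff}}^{S}),
\qquad
f(x)=\Delta t^{2}\left[\frac{1}{P_{p,\nu}(\Delta t^{2}x)}-\frac{1}{\Delta t^{2}x}\right],
\]
where I used $P_{p,\nu}^{\Delta t}(x)=P_{p,\nu}(\Delta t^{2}x)/(\Delta t^{2}x)$ from (\ref{defQpnu}). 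By the spectral theorem, the operator norm of $f(A_{\operatorname*{ff}}^{S})$ equals $\sup_{\lambda\in\sigma(A_{\operatorname*{ff}}^{S})}|f(\lambda)|$.

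Third, I would localize the spectrum of $\Delta t^{2} A_{\operatorname*{ff}}^{S}$ exactly as in the proof of Theorem \ref{TheoCont}: the eigenvalue bound from Lemma \ref{LemEstPif} together with the CFL hypothesis (\ref{condcfl2}) gives $\Delta t^{2}\lambda_{\max}\leq(2+\nu/p^{2})\omega_{p,\nu}=2\delta_{p,\nu}\omega_{p,\nu}$. This reduces the theorem to the purely scalar inequality
\[
\sup_{0<y\leq 2\delta_{p,\nu}\omega_{p,\nu}}\left[\frac{1}{P_{p,\nu}(y)}-\frac{1}{y}\right]\leq\frac{\nu+1}{2\nu}.
\]

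The main obstacle is precisely this last scalar estimate: the constant $(\nu+1)/(2\nu)$ has to come out exactly right, and it must degenerate as $\nu\to 0$, so sharpness is essential. I expect this to be handled by the refined pointwise bounds on the damped Chebyshev polynomial $P_{p,\nu}$ collected in Appendix \ref{AppendixA} (in particular Lemma \ref{LemEstPpnue}, which already supplied the sharp upper and lower bounds on $P_{p,\nu}(y)/y$ used in the continuity and coercivity parts of Theorem \ref{TheoCont}): writing the integrand as $(y-P_{p,\nu}(y))/(y\,P_{p,\nu}(y))$ and combining the lower bound $P_{p,\nu}(y)/y\geq \nu/((2+\nu)^{2}p^{2})$ on the slightly smaller window with a direct analysis of $y-P_{p,\nu}(y)$ near the right endpoint $2\delta_{p,\nu}\omega_{p,\nu}$, where $P_{p,\nu}$ re-enters the critical strip, should produce exactly the stated constant.
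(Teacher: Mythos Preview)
Your reduction is exactly the paper's: use (\ref{ablockdiff}) to collapse to the fine--fine block, invoke the factorization $A_{\operatorname*{ff}}^{S,p,\nu}=(A_{\operatorname*{ff}}^{S})^{1/2}P_{p,\nu}^{\Delta t}(A_{\operatorname*{ff}}^{S})(A_{\operatorname*{ff}}^{S})^{1/2}$ from (\ref{Apvblackboard}), and reduce everything by functional calculus and the CFL bound (\ref{condcfl2}) to the scalar estimate
\[
\sup_{0<y\leq (2+\nu/p^{2})\omega_{p,\nu}}\left|\frac{1-y^{-1}P_{p,\nu}(y)}{P_{p,\nu}(y)}\right|\leq\frac{\nu+1}{2\nu}.
\]
This is precisely the content of Lemma~\ref{Lemdiffquot}, not Lemma~\ref{LemEstPpnue}.

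Your sketch for proving the scalar bound, however, would not yield the stated constant. The lower bound $P_{p,\nu}(y)/y\geq\nu/((2+\nu)^{2}p^{2})$ from (\ref{Pdeltatestb}) is too weak: it only says $P_{p,\nu}(y)\gtrsim y$, so $1/P_{p,\nu}(y)$ blows up as $y\to 0$, and the delicate region is in fact the small-$y$ window, not the right endpoint (nothing special happens there). The paper's argument in Lemma~\ref{Lemdiffquot} proceeds differently: first bound the numerator crudely by $|1-y^{-1}P_{p,\nu}(y)|\leq 1$ via (\ref{Pdeltatesta}), reducing the task to estimating $1/P_{p,\nu}(y)$; then on the main window $y/\omega_{p,\nu}\in[\nu/p^{2},\,2+\nu/p^{2}]$ use the \emph{uniform} lower bound $P_{p,\nu}(y)\geq 2\nu/(\nu+1)$, which follows directly from $T_{p}(\delta_{p,\nu})\geq 1+\nu$ in (\ref{Tplower}) and $|T_p|\leq 1$ on $[-1,1]$. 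This is exactly where the constant $(\nu+1)/(2\nu)$ comes from. The remaining short window $0<y/\omega_{p,\nu}<\nu/p^{2}$ is then handled by a separate Taylor argument giving the smaller bound $e^{3\nu}/6\leq(\nu+1)/(2\nu)$ for $0\leq\nu\leq 1/2$.
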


\begin{proof}
From (\ref{wellposedc}) and Theorem \ref{TheoCont} we conclude that $\left(
A^{S}\right)  ^{-1}$ and $\left(  A^{S,p,\nu}\right)  ^{-1}$ exist. 
We use (\ref{ablockdiff}) and the above expressions for the corresponding block matrices to
obtain%
\begin{align*}
\left(  \left(  \left(  A^{S,p,\nu}\right)  ^{-1}-\left(  A^{S}\right)
^{-1}\right)  u,v\right)  _{\mathcal{T}}  &  =\left(  \left(
A_{\operatorname*{f}\operatorname*{f}}^{S}\right)  ^{-1/2}\left(  \left(
P_{p,\nu}^{\Delta t}\left(  A_{\operatorname*{f}\operatorname*{f}}^{S}\right)
\right)  ^{-1}-I\right)  \left(  A_{\operatorname*{f}\operatorname*{f}}%
^{S}\right)  ^{-1/2}u,v\right)  _{\mathcal{T}}\\
&  \leq\lambda_{\operatorname*{f}\operatorname*{f}}\left\Vert u\right\Vert
_{\mathcal{T}}\left\Vert v\right\Vert _{\mathcal{T}}\qquad\forall u,v\in S,
\end{align*}
where%
\[
\lambda_{\operatorname*{f}\operatorname*{f}}:=\sup_{x\in\sigma\left(
A_{\operatorname*{f}\operatorname*{f}}^{S}\right)  }\left\vert \frac
{1-P_{p,\nu}^{\Delta t}\left(  x\right)  }{xP_{p,\nu}^{\Delta t}\left(
x\right)  }\right\vert =\Delta t^{2}\sup_{y\in\sigma\left(  \Delta
t^{2}A_{\operatorname*{f}\operatorname*{f}}^{S}\right)  }\left\vert
\frac{1-y^{-1}P_{p,\nu}\left(  y\right)  }{P_{p,\nu}\left(  y\right)
}\right\vert .
\]
By Lemma \ref{LemEstPif} and (\ref{condcfl2}), the maximal eigenvalue of $\Delta t^{2}A_{\operatorname*{f} \operatorname*{f}}^{S}$ appearing in the supremum can be bounded from above by%
\begin{equation}
\Delta t^{2}C_{\operatorname*{cont}}C_{\operatorname*{inv}}^{2} \left(\frac{p}{h_{\operatorname*{c}}}\right)^{2}
\overset{(\ref{condcfl2})}{\leq} \left(  2+\nu/p^{2}\right) \omega_{p,\nu}. 
\label{stuppbound}%
\end{equation}
Hence, we may apply Lemma \ref{Lemdiffquot} to obtain%
\[
\lambda_{\operatorname*{f}\operatorname*{f}}\leq\frac{\nu+1}{2\nu}\Delta
t^{2}.
\]%
\end{proof}

Starting from the block respresentation (\ref{Apvblackboard}) we now estimate the extreme eigenvalues of the stabilized LTS bilinear form $a^{p,\nu}$.
\begin{theorem}
\label{Coreigsys}
Let (\ref{wellposed}), (\ref{hsmallerh0}), (\ref{ceqCeq}), (\ref{hfphc}), (\ref{CFLtotal}) be satisfied.
Denote by $\lambda_{p,\nu} > 0$ the eigenvalues of the (symmetric and coercive) bilinear form $a^{p,\nu}$, i.e. 
\[%
a^{p,\nu}\left(  \eta_{p,\nu},v\right)  
= \lambda_{p,\nu} \left(\eta_{p,\nu},v\right)_{\mathcal{T}} 
\quad \forall v\in S,
\]
with corresponding eigenfunctions $\eta_{p,\nu}$ orthonormalized w.r.t. $(\cdot,\cdot)_{\mathcal{T}}$.

Then, the largest eigenvalue satisfies%
\begin{equation}
\Delta t^2 \lambda_{p,\nu}^{\max}\leq4-\frac{\nu}{\nu+1} \label{estlmaxp}%
\end{equation}
and the smallest eigenvalue satisfies%
\[
\lambda_{p,\nu}^{\min}\geq\frac{1}{2}\min\left\{  \frac
{c_{\operatorname*{coer}}}{C_{\operatorname*{eq}}^{2}},\frac{2 \nu}{\left(
\nu+1\right)  \Delta t^2}\right\} =: c_\nu  .
\]

\end{theorem}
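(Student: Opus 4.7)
The two bounds are argued by dual routes: the lower bound on $\lambda_{p,\nu}^{\min}$ by passage to $(A^{S,p,\nu})^{-1}$ and Lemma~\ref{LemInvDiff}, and the upper bound on $\lambda_{p,\nu}^{\max}$ directly from the block representation \eqref{Apvblackboard}. For $\lambda_{p,\nu}^{\min}$, since $A^{S,p,\nu}$ is symmetric and positive definite by Theorem~\ref{TheoCont}, the quantity $1/\lambda_{p,\nu}^{\min}$ equals the $(\cdot,\cdot)_{\mathcal{T}}$-operator norm of $(A^{S,p,\nu})^{-1}$. I would decompose
\[
(A^{S,p,\nu})^{-1} = (A^S)^{-1} + \bigl((A^{S,p,\nu})^{-1}-(A^S)^{-1}\bigr),
\]
bound the first summand in the $(\cdot,\cdot)_{\mathcal{T}}$-scalar product by $(C^2_{\operatorname*{eq}}/c_{\operatorname*{coer}})\|u\|^2_{\mathcal{T}}$ using \eqref{lowevAs} applied to $(A^S)^{-1}$, and the second by $\tfrac{\nu+1}{2\nu}\Delta t^2\,\|u\|^2_{\mathcal{T}}$ via Lemma~\ref{LemInvDiff}. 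Adding these estimates, inverting, and invoking the elementary inequality $1/(a+b)\geq \tfrac{1}{2}\min(1/a,1/b)$ for positive $a,b$ produces precisely the claimed minimum.

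For $\lambda_{p,\nu}^{\max}$, the starting point is the explicit block form \eqref{Apvblackboard}, from which
\[
A_{\operatorname*{f}\operatorname*{f}}^{S,p,\nu} = A_{\operatorname*{f}\operatorname*{f}}^{S}\,P_{p,\nu}^{\Delta t}\!\bigl(A_{\operatorname*{f}\operatorname*{f}}^{S}\bigr) = \Delta t^{-2}\,P_{p,\nu}\!\bigl(\Delta t^2 A_{\operatorname*{f}\operatorname*{f}}^{S}\bigr).
\]
Under the strong CFL \eqref{CFLtotal} combined with Lemma~\ref{LemEstPif}, the spectrum of $\Delta t^2 A_{\operatorname*{f}\operatorname*{f}}^{S}$ sits in $[0,(1+\delta_{p,\nu})\omega_{p,\nu}]$, on which $P_{p,\nu}(x)\leq 2\bigl(1+1/T_p(\delta_{p,\nu})\bigr)$; together with the Chebyshev-polynomial bound $T_p(\delta_{p,\nu})\geq 2(\nu+1)/(\nu+2)$ derivable from the estimates of Appendix~\ref{AppendixA}, this yields $\Delta t^2\lambda_{\max}\bigl(A_{\operatorname*{f}\operatorname*{f}}^{S,p,\nu}\bigr)\leq 4-\nu/(\nu+1)$. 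To propagate the fine-block bound to all of $\mathbb{A}^{S,p,\nu}$, I would use the $LDL^{\ast}$-factorization implicit in \eqref{ablockdiff},
\[
\mathbb{A}^{S,p,\nu} = L\,\begin{pmatrix}\mathfrak{S}^S & 0\\ 0 & A_{\operatorname*{f}\operatorname*{f}}^{S,p,\nu}\end{pmatrix}\,L^{\ast},\qquad L=\begin{pmatrix} I_{\operatorname*{c}\operatorname*{c}} & A_{\operatorname*{c}\operatorname*{f}}^{S}(A_{\operatorname*{f}\operatorname*{f}}^{S})^{-1}\\ 0 & I_{\operatorname*{f}\operatorname*{f}}\end{pmatrix},
\]
in which $L$ is independent of $\nu$ and $\mathfrak{S}^S=\mathfrak{S}^{S,p,\nu}$ is the common Schur complement. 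The positivity of $K\,I-\mathbb{A}^{S,p,\nu}$ with $K=(4-\nu/(\nu+1))/\Delta t^2$ then reduces, via a Schur-complement test applied to $K(LL^{\ast})^{-1}-\operatorname{diag}(\mathfrak{S}^S,A_{\operatorname*{f}\operatorname*{f}}^{S,p,\nu})$, to controlling a cross-term of the form $K\,X^{\ast}\mathfrak{S}^S(K-\mathfrak{S}^S)^{-1}X$ with $X=A_{\operatorname*{c}\operatorname*{f}}^{S}(A_{\operatorname*{f}\operatorname*{f}}^{S})^{-1}$. Here the strong CFL \eqref{CFLtotal}—rather than the weaker \eqref{condcfl2}—is essential: it forces $\Delta t^2\lambda_{\max}(\mathfrak{S}^S)\leq\Delta t^2\lambda_{\max}(A_{\operatorname*{c}\operatorname*{c}}^{S})\leq \nu/\bigl((\nu+1)(3+C_{\operatorname*{cont}}/c_{\operatorname*{coer}})\bigr)$, tiny compared with $4-\nu/(\nu+1)$, which provides the margin $K-\mathfrak{S}^S$ needed to absorb the cross-term.

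The straightforward part is the lower bound, a direct consequence of Lemma~\ref{LemInvDiff} and the elementary inverse inequality. The hard part is the upper bound: the factors $L$, $L^{\ast}$ are not unitary, so block spectral bounds do not transfer verbatim to $\lambda_{p,\nu}^{\max}$. Carefully absorbing the harmonic-extension coupling term $X^{\ast}\mathfrak{S}^S(K-\mathfrak{S}^S)^{-1}X$ requires simultaneously the sharpened polynomial estimate on $P_{p,\nu}$ and the comparatively large constant $(3+C_{\operatorname*{cont}}/c_{\operatorname*{coer}})$ in \eqref{CFLtotal}; the latter is precisely what ensures that $\mathfrak{S}^S$ is small enough to leave room for the fine-block bound to dictate $\lambda_{p,\nu}^{\max}$.
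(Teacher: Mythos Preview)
Your treatment of the lower bound is exactly the paper's: split $(A^{S,p,\nu})^{-1}=(A^S)^{-1}+\bigl((A^{S,p,\nu})^{-1}-(A^S)^{-1}\bigr)$, invoke \eqref{lowevAs} and Lemma~\ref{LemInvDiff}, and invert.

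For the upper bound you take a genuinely different route, and there is a gap. The paper does \emph{not} use the $LDL^{\ast}$ factorization; it estimates each block of \eqref{Apvblackboard} directly in the quadratic form $[\mathbb{A}^{S,p,\nu}(u_{\mathrm c},u_{\mathrm f})^{\intercal},(u_{\mathrm c},u_{\mathrm f})^{\intercal}]_{\mathcal{T}}$: the fine block via Lemma~\ref{LemPpnue} (giving $\Delta t^{2}(A_{\mathrm{ff}}^{S,p,\nu}u_{\mathrm f},u_{\mathrm f})_{\mathcal T}\le (4-\tfrac{2\nu}{\nu+1})\|u_{\mathrm f}\|_{\mathcal T}^{2}$, \emph{not} $4-\tfrac{\nu}{\nu+1}$), the coarse block and off-diagonal blocks via the coarse inverse inequality \eqref{globalinvcoarse} together with \eqref{OffDiaga}--\eqref{OffDiagc}. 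These are then combined by Young's inequality and the $(\cdot,\cdot)_{\mathcal T}$-orthogonality $\|u_{\mathrm c}\|_{\mathcal T}^{2}+\|u_{\mathrm f}\|_{\mathcal T}^{2}=\|u\|_{\mathcal T}^{2}$ from Lemma~\ref{LemEstPif}, so that all ``coarse'' contributions assemble into $C_{\mathrm{cont}}C_{\mathrm{inv}}^{2}(3+C_{\mathrm{cont}}/c_{\mathrm{coer}})(\Delta t/h_{\mathrm c})^{2}$, which the CFL \eqref{CFLtotal} bounds by $\nu/(\nu+1)$. Adding this to the fine bound $4-2\nu/(\nu+1)$ gives exactly $4-\nu/(\nu+1)$.

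Your route has two problems. First, you quote $T_p(\delta_{p,\nu})\ge 2(\nu+1)/(\nu+2)$, which is weaker than the bound $T_p(\delta_{p,\nu})\ge 1+\nu$ of \eqref{Tplower}; with your bound the fine block already saturates the target $4-\nu/(\nu+1)$, leaving \emph{zero} margin. Second, your Schur-complement reduction of $KI-\mathbb{A}^{S,p,\nu}\ge 0$ through $K(L^{\ast}L)^{-1}-D\ge 0$ necessarily produces an off-diagonal block $-KX$ (with $X=A_{\mathrm{cf}}^{S}(A_{\mathrm{ff}}^{S})^{-1}$), so positivity forces either $K-A_{\mathrm{ff}}^{S,p,\nu}$ to be strictly positive with enough room to absorb $K^{2}X^{\ast}(\cdots)^{-1}X$, or the analogous condition on the coarse side; either way a positive margin on the fine block is required. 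The cross-term you write, $KX^{\ast}\mathfrak S^{S}(K-\mathfrak S^{S})^{-1}X$, does not arise from the reduction you describe, and smallness of $\mathfrak S^{S}$ alone cannot close the argument when $K-A_{\mathrm{ff}}^{S,p,\nu}$ is not uniformly bounded below. Using the sharper fine-block bound $4-2\nu/(\nu+1)$ and then estimating $\|X\|$ via \eqref{OffDiaga}--\eqref{OffDiagb} could in principle be made to work, but at that point you are essentially redoing the paper's block estimates inside a more cumbersome algebraic wrapper; the orthogonal splitting and Young's inequality get there more directly.
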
%

\begin{proof} 
\textbf{(i) Upper bound for the largest eigenvalue}

First, we derive upper bounds for the individual blocks in (\ref{Apvblackboard}).

\textbf{Estimate of }$A_{\operatorname*{f}\operatorname*{f}}^{S,p,\nu}$\textbf{:}

We have%
\begin{equation}
\left(  \left(  A_{\operatorname*{f}\operatorname*{f}}^{S}\right)
^{1/2}P_{p,\nu}^{\Delta t}\left(  A_{\operatorname*{f}\operatorname*{f}}%
^{S}\right)  \left(  A_{\operatorname*{f}\operatorname*{f}}^{S}\right)
^{1/2}u_{\operatorname*{f}},u_{\operatorname*{f}}\right)  _{\mathcal{T}}%
\leq \frac{\kappa_{\operatorname*{f}\operatorname*{f}}}{\Delta t^2} \left\Vert u_{\operatorname*{f}%
}\right\Vert _{\mathcal{T}}^{2}, 
\label{kappaff}%
\end{equation}
where%
\[
\kappa_{\operatorname*{f}\operatorname*{f}}
:=\sup_{x\in\sigma\left(  \Delta t^{2}A_{\operatorname*{f}\operatorname*{f}}%
^{S}\right)  }\left\vert P_{p,\nu}\left(  x\right)  \right\vert .
\]
We again use the bound (\ref{stuppbound}) for the maximal eigenvalue of
$\Delta t^{2}A_{\operatorname*{f}\operatorname*{f}}^{S}$ and conclude from
Lemma \ref{LemPpnue}%
\[
\kappa_{\operatorname*{f}\operatorname*{f}}\leq\frac{2\left(  2+\nu\right)
}{1+\nu}=4-\frac{2\nu}{\nu+1}.
\]
This leads to%
\[
\left(  A_{\operatorname*{f}\operatorname*{f}}^{S}\right)  ^{1/2}P_{p,\nu
}^{\Delta t}\left(  A_{\operatorname*{f}\operatorname*{f}}^{S}\right)  \left(
A_{\operatorname*{f}\operatorname*{f}}^{S}\right)  ^{1/2}\leq\frac{2}{\Delta
t^{2}}\frac{\left(  2+\nu\right)  }{1+\nu}I_{\operatorname*{f}%
\operatorname*{f}}.
\]

\textbf{Estimate of }
$A_{\operatorname*{c}\operatorname*{c}}^{S,p,\nu} = 
A_{\operatorname*{c}\operatorname*{c}}^{S} - 
A_{\operatorname*{c}\operatorname*{f}}^{S} \left(  A_{\operatorname*{f}\operatorname*{f}}^{S}\right)^{-1/2} Q_{p,\nu}^{\Delta t} \left(  A_{\operatorname*{f}\operatorname*{f}}^{S}\right) \left(  A_{\operatorname*{f}\operatorname*{f}}^{S} \right)^{-1/2} A_{\operatorname*{f}\operatorname*{c}}^{S}$\textbf{:}

\textbf{1) Estimate of }$A_{\operatorname*{c}\operatorname*{c}}^{S}$\textbf{:}

By continuity of $a$, we have for all $u_{\operatorname*{c}}\in S_{\operatorname*{c}}$%
\[
\left(  A_{\operatorname*{c}\operatorname*{c}}^{S}u_{\operatorname*{c}%
},u_{\operatorname*{c}}\right)  _{\mathcal{T}}=\left(  A^{S}%
u_{\operatorname*{c}},u_{\operatorname*{c}}\right)  _{\mathcal{T}}\leq
C_{\operatorname*{cont}}\left\Vert u_{\operatorname*{c}}\right\Vert
_{H^{1}\left(  \Omega\right)  }^{2}.
\]
From (\ref{globalinvcoarse}), we thus conclude that
\begin{equation}
\left(  A_{\operatorname*{c}\operatorname*{c}}^{S}u_{\operatorname*{c}%
},u_{\operatorname*{c}}\right)  _{\mathcal{T}}\leq C_{\operatorname*{cont}%
}C_{\operatorname*{inv}}^{2}h_{\operatorname*{c}}%
^{-2}\left\Vert u_{\operatorname*{c}}\right\Vert _{\mathcal{T}}^{2}.
\label{estass}%
\end{equation}

\textbf{2) Estimate of }$A_{\operatorname*{c}\operatorname*{f}}%
^{S}\left(  A_{\operatorname*{f}\operatorname*{f}}^{S}\right)  ^{-1/2}%
Q_{p,\nu}^{\Delta t}\left(  A_{\operatorname*{f}\operatorname*{f}}^{S}\right)
\left(  A_{\operatorname*{f}\operatorname*{f}}^{S}\right)  ^{-1/2}%
A_{\operatorname*{f}\operatorname*{c}}^{S}$\textbf{:}

By symmetry of $A_{\operatorname*{f} \operatorname*{f}}^S$ with respect to $(\cdot, \cdot)_{\mathcal{T}}$, we have%
\begin{align*}
\left(  \left(  A_{\operatorname*{f}\operatorname*{f}}^{S}\right)
^{-1/2}A_{\operatorname*{f}\operatorname*{c}}^{S}u_{\operatorname*{c}%
},v_{\operatorname*{f}}\right)  _{\mathcal{T}}  &  =\left(
A_{\operatorname*{f}\operatorname*{c}}^{S}u_{\operatorname*{c}},\left(
A_{\operatorname*{f}\operatorname*{f}}^{S}\right)  ^{-1/2}v_{\operatorname*{f}%
}\right)  _{\mathcal{T}}=a\left(  u_{\operatorname*{c}},\left(
A_{\operatorname*{f}\operatorname*{f}}^{S}\right)  ^{-1/2}v_{\operatorname*{f}%
}\right) \\
&  \leq C_{\operatorname*{cont}}\left\Vert u_{\operatorname*{c}}\right\Vert
_{H^{1}\left(  \Omega\right)  }\left\Vert \left(  A_{\operatorname*{f}%
\operatorname*{f}}^{S}\right)  ^{-1/2}v_{\operatorname*{f}}\right\Vert
_{H^{1}\left(  \Omega\right)  } \\
&  \overset{\text{(\ref{globalinvcoarse})}}{\leq}C_{\operatorname*{cont}%
}C_{\operatorname*{inv}}h_{\operatorname*{c}}^{-1}\left\Vert
u_{\operatorname*{c}}\right\Vert _{\mathcal{T}}\left\Vert \left(
A_{\operatorname*{f}\operatorname*{f}}^{S}\right)  ^{-1/2}v_{\operatorname*{f}%
}\right\Vert _{H^{1}\left(  \Omega\right)  }.
\end{align*}
Since we can bound the last term in (\ref{estass}) as%
\begin{align*}
c_{\operatorname*{coer}}\left\Vert \left(  A_{\operatorname*{f}%
\operatorname*{f}}^{S}\right)  ^{-1/2}v_{\operatorname*{f}}\right\Vert
_{H^{1}\left(  \Omega\right)  }^{2}  &  \leq a\left(  \left(
A_{\operatorname*{f}\operatorname*{f}}^{S}\right)  ^{-1/2}v_{\operatorname*{f}%
},\left(  A_{\operatorname*{f}\operatorname*{f}}^{S}\right)  ^{-1/2}%
v_{\operatorname*{f}}\right) \\
&  =\left(  A_{\operatorname*{f}\operatorname*{f}}^{S}\left(
A_{\operatorname*{f}\operatorname*{f}}^{S}\right)  ^{-1/2}v_{\operatorname*{f}%
},\left(  A_{\operatorname*{f}\operatorname*{f}}^{S}\right)  ^{-1/2}%
v_{\operatorname*{f}}\right)  _{\mathcal{T}}=\left\Vert v_{\operatorname*{f}%
}\right\Vert _{\mathcal{T}}^{2},
\end{align*}
we obtain%
\begin{equation}
\left(  \left(  A_{\operatorname*{f}\operatorname*{f}}^{S}\right)
^{-1/2}A_{\operatorname*{f}\operatorname*{c}}^{S}u_{\operatorname*{c}%
},v_{\operatorname*{f}}\right)  _{\mathcal{T}}\leq\frac
{C_{\operatorname*{cont}}}{\sqrt{c_{\operatorname*{coer}}}}%
C_{\operatorname*{inv}} h_{\operatorname*{c}}^{-1}\left\Vert
u_{\operatorname*{c}}\right\Vert _{\mathcal{T}}\left\Vert v_{\operatorname*{f}%
}\right\Vert _{\mathcal{T}}. \label{OffDiaga}%
\end{equation}
Similarly, we derive the upper bound%
\begin{equation}
\left(  A_{\operatorname*{c}\operatorname*{f}}^{S}\left(  A_{\operatorname*{f}%
\operatorname*{f}}^{S}\right)  ^{-1/2}v_{\operatorname*{f}}%
,u_{\operatorname*{c}}\right)  _{\mathcal{T}}
\leq\frac{C_{\operatorname*{cont}}}{\sqrt{c_{\operatorname*{coer}}}}
C_{\operatorname*{inv}}h_{\operatorname*{c}}^{-1}\left\Vert u_{\operatorname*{c}}\right\Vert
_{\mathcal{T}}\left\Vert v_{\operatorname*{f}}\right\Vert _{\mathcal{T}}.
\label{OffDiagb}%
\end{equation}
To estimate $Q_{p,\nu}^{\Delta t}\left(  A_{\operatorname*{f}\operatorname*{f}}^{S}\right)$, we proceed as in (\ref{kappaff}) to obtain%
\begin{align}
\left(  Q_{p,\nu}^{\Delta t}\left(  A_{\operatorname*{f}\operatorname*{f}}%
^{S}\right)  u_{\operatorname*{f}},u_{\operatorname*{f}}\right)
_{\mathcal{T}}  &  =\left(  u_{\operatorname*{f}},u_{\operatorname*{f}%
}\right)  _{\mathcal{T}}-\left(  P_{p,\nu}^{\Delta t}\left(
A_{\operatorname*{f}\operatorname*{f}}^{S}\right)  u_{\operatorname*{f}%
},u_{\operatorname*{f}}\right)  _{\mathcal{T}}\nonumber\\
&  \overset{(\ref{Pdeltatest})}{\leq}  
\left\Vert u_{\operatorname*{f}}\right\Vert
_{\mathcal{T}}^{2}. \label{OffDiagc}%
\end{align}
The combination of (\ref{estass}), (\ref{OffDiaga}), (\ref{OffDiagb}),
(\ref{OffDiagc}) yields the upper bound
\[
\left(  A_{\operatorname*{c}\operatorname*{c}}^{S,p,\nu}u_{\operatorname*{c}%
},u_{\operatorname*{c}}\right)  _{\mathcal{T}}\leq C_{\operatorname*{cont}%
}C_{\operatorname*{inv}}^{2}h_{\operatorname*{c}}%
^{-2}\left(  1 +  
\frac{C_{\operatorname*{cont}}}{c_{\operatorname*{coer}}%
}  \right)  \left\Vert
u_{\operatorname*{c}}\right\Vert _{\mathcal{T}}^{2}.
\]

\textbf{Estimate of }$A_{\operatorname*{c}\operatorname*{f}}^{S,p,\nu}$\textbf{:}

Following a similar argument as in (\ref{kappaff}), we obtain
\begin{align*}
\left(  A_{\operatorname*{c}\operatorname*{f}}^{S}P_{p,\nu}^{\Delta t}\left(
A_{\operatorname*{f}\operatorname*{f}}^{S}\right)  u_{\operatorname*{f}%
},v_{\operatorname*{c}}\right)  _{\mathcal{T}}  
&  = a\left(  P_{p,\nu}^{\Delta
t}\left(  A_{\operatorname*{f}\operatorname*{f}}^{S}\right)
u_{\operatorname*{f}},v_{\operatorname*{c}}\right) \\
&  \leq C_{\operatorname*{cont}}C_{\operatorname*{inv}}%
^{2}h_{\operatorname*{c}}^{-2}\left\Vert P_{p,\nu}^{\Delta t}\left(
A_{\operatorname*{f}\operatorname*{f}}^{S}\right)  u_{\operatorname*{f}%
}\right\Vert _{\mathcal{T}}\left\Vert v_{\operatorname*{c}}\right\Vert
_{\mathcal{T}}\\
&  \overset{(\ref{Pdeltatest})}{\leq}C_{\operatorname*{cont}%
}C_{\operatorname*{inv}}^{2}h_{\operatorname*{c}}%
^{-2} 
\left\Vert u_{\operatorname*{f}%
}\right\Vert _{\mathcal{T}}\left\Vert v_{\operatorname*{c}}\right\Vert
_{\mathcal{T}}.
\end{align*}

\textbf{Estimate of }$A_{\operatorname*{f}\operatorname*{c}}^{S,p,\nu}$\textbf{:}

As in the previous case, we conclude that%
\[
\left(  A_{\operatorname*{f}\operatorname*{c}}^{S,p,\nu}v_{\operatorname*{c}%
},u_{\operatorname*{f}}\right)  _{\mathcal{T}}\leq C_{\operatorname*{cont}%
}C_{\operatorname*{inv}}^{2}h_{\operatorname*{c}}%
^{-2} 
\left\Vert u_{\operatorname*{f}%
}\right\Vert _{\mathcal{T}}\left\Vert v_{\operatorname*{c}}\right\Vert
_{\mathcal{T}}%
\]
holds.

\textbf{Final estimate:}

From (\ref{Apvblackboard}) together with the above four estimates, we obtain using Young's inequality%
\begin{align*}
a^{p,\nu}\left(  u,u\right)  &=  \left[  \mathbb{A}^{S,p,\nu}\binom
{u_{\operatorname*{c}}}{u_{\operatorname*{f}}},\binom{u_{\operatorname*{c}}%
}{u_{\operatorname*{f}}}\right]  _{\mathcal{T}} \\
&\leq  C_{\operatorname*{cont}}C_{\operatorname*{inv}}%
^{2}h_{\operatorname*{c}}^{-2}\left(  1+
\frac{C_{\operatorname*{cont}}%
}{c_{\operatorname*{coer}}}
\right)  \left\Vert u_{\operatorname*{c}}\right\Vert _{\mathcal{T}}^{2} \\
&\quad  +2C_{\operatorname*{cont}}C_{\operatorname*{inv}}%
^{2}h_{\operatorname*{c}}^{-2} 
\left\Vert
u_{\operatorname*{f}}\right\Vert _{\mathcal{T}}\left\Vert u_{\operatorname*{c}%
}\right\Vert _{\mathcal{T}}+\frac{2}{\Delta t^{2}}\frac{\left(  2+\nu\right)
}{1+\nu}\left\Vert u_{\operatorname*{f}}\right\Vert _{\mathcal{T}}^{2} \\
&\leq  C_{\operatorname*{cont}}C_{\operatorname*{inv}}%
^{2}h_{\operatorname*{c}}^{-2}\left( 2 +\frac{C_{\operatorname*{cont}}%
}{c_{\operatorname*{coer}}}\right)    
\left\Vert u_{\operatorname*{c}}\right\Vert _{\mathcal{T}}^{2}\\
&\quad  +\left(  C_{\operatorname*{cont}}C_{\operatorname*{inv}}^{2}h_{\operatorname*{c}}^{-2} 
+\frac
{2}{\Delta t^{2}}\frac{\left(  2+\nu\right)  }{1+\nu}\right)  \left\Vert
u_{\operatorname*{f}}\right\Vert _{\mathcal{T}}^{2}\\
&\overset{(\ref{Rfest})}{\leq}  \left(  C_{\operatorname*{cont}}C_{\operatorname*{inv}}^{2}
\left(  3 + \frac{C_{\operatorname*{cont}}%
}{c_{\operatorname*{coer}}}\right)
h_{\operatorname*{c}}^{-2} + \frac{2}{\Delta t^{2}}\frac{\left(  2+\nu\right)  }{1+\nu}\right)
\left\Vert u\right\Vert _{\mathcal{T}}^{2}.
\end{align*}

From this, we conclude by (\ref{CFLtotal}) that the maximal eigenvalue $\lambda_{p,\nu}^{\max}$
satisfies
\begin{align*}
\Delta t^{2}\lambda_{p,\nu}^{\max}  &  \leq C_{\operatorname*{cont}} C_{\operatorname*{inv}}^{2}
\left(  3 + \frac{C_{\operatorname*{cont}}%
}{c_{\operatorname*{coer}}}\right)  
\left(  \frac{\Delta t}{h_{\operatorname*{c}}}\right)  ^{2}%
+2\frac{\left(  2+\nu\right)  }{1+\nu}\\
&  \leq4-\frac{\nu}{\nu+1}.
\end{align*}

\textbf{(ii) Lower bound for the smallest eigenvalue}

From the (trivial) identity,%
\[
\left(  \mathbb{A}^{S,p,\nu}\right)  ^{-1}=\left(  \mathbb{A}^{S}\right)
^{-1}+\left(  \left(  \mathbb{A}^{S,p,\nu}\right)  ^{-1}-\left(
\mathbb{A}^{S}\right)  ^{-1}\right),
\]
combined with (\ref{lowevAs}) and Lemma \ref{LemInvDiff}, we immediately obtain%
\[
\left(  \mathbb{A}^{S,p,\nu}\right)  ^{-1}\leq\left(  \frac
{C_{\operatorname*{eq}}^{2}}{c_{\operatorname*{coer}}}+\frac{\nu+1}{2 \nu}\Delta
t^2\right)  \mathbb{I},
\]
where $\mathbb{I}$ denotes the identity operator.
\end{proof}

\begin{corollary}
Let the assumptions of Theorems \ref{TheoCont} hold, together with the CFL condition (\ref{CFLtotal}). 
Then, the discrete energy
\begin{equation}
E^{n+1/2} = \frac{1}{2} \left\{ \left( \frac{u_S^{(n+1)}-u_S^{(n)}}{\Delta t} , \frac{u_S^{(n+1)}-u_S^{(n)}}{\Delta t} \right)_{\mathcal{T}} + a^{p,\nu} \left( u_S^{(n+1)} ,  u_S^{(n)} \right) \right\}
\label{DefDiscNrj}
\end{equation}
is non-negative and conserved by the stabilized LF-LTS method (\ref{leap_frog_lts_fd}).
\end{corollary}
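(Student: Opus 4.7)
The plan is to establish the two claims by the standard leapfrog energy argument, using symmetry of $a^{p,\nu}$ and the sharp spectral bound on $\lambda_{p,\nu}^{\max}$ already provided by Theorem \ref{Coreigsys}.

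For \textbf{conservation}, I would form the telescoped difference $E^{n+1/2} - E^{n-1/2}$ and show it vanishes identically. The kinetic part contributes
\[
\frac{1}{2\Delta t^{2}}\left(u_{S}^{(n+1)} - 2u_{S}^{(n)} + u_{S}^{(n-1)},\, u_{S}^{(n+1)} - u_{S}^{(n-1)}\right)_{\mathcal{T}},
\]
after factoring a difference of squares. The potential part, using the symmetry of $a^{p,\nu}$ granted by Theorem \ref{TheoCont}, collapses to
\[
\tfrac{1}{2} a^{p,\nu}\!\left(u_{S}^{(n+1)} - u_{S}^{(n-1)},\, u_{S}^{(n)}\right).
\]
Choosing the test function $w = u_{S}^{(n+1)} - u_{S}^{(n-1)}$ in the recurrence (\ref{leap_frog_lts_fd}) and invoking symmetry once more shows the two contributions cancel.

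For \textbf{non-negativity}, I would apply the polarization identity $a^{p,\nu}(u,v) = \tfrac{1}{4}[a^{p,\nu}(u+v,u+v) - a^{p,\nu}(u-v,u-v)]$ to the potential term in (\ref{DefDiscNrj}), obtaining
\[
E^{n+1/2} = \tfrac{1}{8} a^{p,\nu}\!\left(u_{S}^{(n+1)} + u_{S}^{(n)},\, u_{S}^{(n+1)} + u_{S}^{(n)}\right) + \tfrac{1}{2\Delta t^{2}}\!\left\|u_{S}^{(n+1)} - u_{S}^{(n)}\right\|_{\mathcal{T}}^{2} - \tfrac{1}{8} a^{p,\nu}\!\left(u_{S}^{(n+1)} - u_{S}^{(n)},\, u_{S}^{(n+1)} - u_{S}^{(n)}\right).
\]
The first summand is non-negative by coercivity of $a^{p,\nu}$ (Theorem \ref{TheoCont}). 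For the remaining two, set $w := u_{S}^{(n+1)} - u_{S}^{(n)}$ and use the spectral representation $a^{p,\nu}(w,w) \leq \lambda_{p,\nu}^{\max}\|w\|_{\mathcal{T}}^{2}$. Theorem \ref{Coreigsys} (whose hypotheses match those of the corollary) gives $\Delta t^{2}\lambda_{p,\nu}^{\max} \leq 4 - \nu/(\nu+1)$, so
\[
\tfrac{1}{2\Delta t^{2}}\|w\|_{\mathcal{T}}^{2} - \tfrac{1}{8} a^{p,\nu}(w,w) \geq \frac{1}{8\Delta t^{2}}\!\left(4 - \Delta t^{2}\lambda_{p,\nu}^{\max}\right)\!\|w\|_{\mathcal{T}}^{2} \geq \frac{\nu/(\nu+1)}{8\Delta t^{2}}\|w\|_{\mathcal{T}}^{2} \geq 0,
\]
completing the proof.

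The conservation half is entirely formal once symmetry is in hand; the only real substance is the non-negativity, and even there the work has been done upstream: the CFL condition (\ref{CFLtotal}) was precisely calibrated in Theorem \ref{Coreigsys} to yield $\Delta t^{2}\lambda_{p,\nu}^{\max} < 4$ with a quantitative gap. So the main obstacle — getting the sharp spectral bound that makes the difference of the two ``$w$-terms'' non-negative — has already been overcome in Section 3.1, and the corollary reduces to combining symmetry, coercivity, and that eigenvalue estimate via the polarization identity.
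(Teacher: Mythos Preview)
Your proposal is correct and follows essentially the same approach as the paper: the paper's proof simply invokes the eigenvalue bounds of Theorem~\ref{Coreigsys} for non-negativity and the symmetry of $a^{p,\nu}$ from Theorem~\ref{TheoCont} for conservation, deferring the details to a reference~\cite{COHEN}. You have written out explicitly the standard leapfrog energy argument that the paper leaves implicit, and every step checks out.
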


\begin{proof}
The assumptions imply those for Theorem 3.7, where
 positive lower bounds on the eigenvalues of the bilinear form $a^{p,\nu}$
were derived, and thus $E^{n+1/2} \geq 0$.
From the symmetry of $a^{p,\nu}$, proved in Theorem \ref{TheoCont}, we then conclude that the energy is conserved -- see e.g.~\cite{COHEN}.
\end{proof}

\subsection{Error equation and estimates}

{To derive a priori error estimates for the stabilized LTS/FE-Galerkin solution of (\ref{leap_frog_lts_fd}), 
we first introduce%
\begin{equation*}
v_{S}^{\left(  n+1/2\right)  }:=\frac{u_{S}^{\left(  n+1\right)  }%
-u_{S}^{\left(  n\right)  }}{\Delta t}, 
\end{equation*}
and rewrite (\ref{leap_frog_lts_fd}) as a one-step method
\begin{equation}%
\begin{aligned}
\left(  v_{S}^{\left(  n+1/2\right)  },q\right)_{\mathcal{T}}   &  = \left(  v_{S}^{\left(
n-1/2\right)  },q\right)_{\mathcal{T}}  -\Delta t\, a^{p,\nu}\left(  u_{S}^{\left(  n\right)
},q\right)   \; & &\forall q\in
S,\\
\left(
u_{S}^{\left(  n+1\right)  },r\right)_{\mathcal{T}}   &  = \left(  u_{S}^{\left(  n\right)
},r\right)_{\mathcal{T}} + \Delta t\left(  v_{S}^{\left(  n+1/2\right)  },r\right)_{\mathcal{T}}  \; & &\forall r\in S,\\
\left(  u_{S}^{\left(  0\right)  },w\right)_{\mathcal{T}}   &  = \left(  u_{0},w\right)_{\mathcal{T}} & &\forall w\in S,\\
\left(  v_{S}^{\left(  1/2\right)  },w\right)_{\mathcal{T}}   &  =\left(  v_{0},w\right)_{\mathcal{T}}
- \frac{\Delta t}{2} a\left(
u_{0},w\right)   \;& &\forall w\in S.
\end{aligned}
\label{eq1}%
\end{equation}
}

The first two equations in (\ref{eq1}) correspond to the one-step iteration
\begin{equation*}
\left[
\begin{array}{c}
v_S^{(n+1/2)} \\
u_S^{(n+1)}
\end{array}
\right] = \mathfrak{B} \left[
\begin{array}{c}
v_S^{(n-1/2)} \\
u_S^{(n)}
\end{array}
\right],\quad n \geq 1
\end{equation*}
with
\begin{equation*}
\mathfrak{B}:=
\left[
\begin{array}[c]{cc}%
I_{S} & -\Delta t A^{S,p,\nu}\\
\Delta t \, I_{S} & I_{S}-\Delta t^{2} A^{S,p,\nu}%
\end{array}
\right]  .
\end{equation*}
and $A^{S,p,\nu}$ as in (\ref{defASp}).

Next, we denote the error by
\[
\mathbf{e}^{\left(  n+1\right)  }:=
\begin{bmatrix}
e_{v}^{\left(n+1/2\right)  } \\
e_{u}^{\left(  n+1\right)  }
\end{bmatrix} := 
\begin{bmatrix}
v\left(  t_{n+1/2}\right)  - v_{S}^{\left(
n+1/2 \right)  } \\
u\left(t_{n+1}\right) - u_{S}^{\left(  n+1 \right)  }
\end{bmatrix},
\]
where $u$ is the solution of \eqref{waveeq}-\eqref{waveeqic} and $v$ the
solution of the corresponding first-order formulation: {Find $u,v: [0,T]
\rightarrow V$ such that%
\begin{align*}%
\left(  \dot{v},w\right)  +a\left(  u,w\right)   &  = 0
& \quad\forall\,w\in V,\quad t>0,\\
\left(  v,w\right)   &  =\left(  \dot{u},w\right)  & \quad\forall\,w\in V,\quad
t>0,
\end{align*}
with initial conditions $u(0)=u_{0}$ and $v(0)=v_{0}$.}

We shall now split the error into a semi-discrete and a fully discrete contribution.
To do so, we introduce the first-order formulation of the
semi-discrete problem (\ref{spacedisc}). Find $u_{S},v_{S}:\left[  0,T\right]
\rightarrow S$ such that%
\[%
\begin{array}[c]{cl}%
\left.
\begin{array}[c]{rcl}%
\left(  \dot{v}_{S},w\right)_{\mathcal{T}}  +a\left(  u_{S},w\right) & = & 0 \\
\left(  v_{S},w\right)_{\mathcal{T}}  & = & \left(  \dot{u}_{S},w\right)_{\mathcal{T}}
\end{array}
\right\}  & \forall w\in S,\quad t>0,\\
\begin{array}[c]{rcl}%
\qquad u_{S}\left(  0\right) & = & r_{S}  u_{0} \\
\qquad v_{S}\left(  0\right) & = & r_{S}  v_{0}.
\end{array}
\end{array}
\]
Hence, we may write 
\begin{equation}
\mathbf{e}^{\left(  n+1\right)  } = 
\mathbf{e}_{S}^{\left(  n+1\right)  }+\mathbf{e}_{S,\Delta t}^{\left(n+1\right)  }
\label{defspliterror}
\end{equation}
with%
\begin{align*}
\mathbf{e}_{S}^{\left(  n+1\right)  }  &  :=\left[
\begin{array}[c]{c}%
e_{v,S}^{\left(  n+1/2\right)  }\\
e_{u,S}^{\left(  n+1\right)  }%
\end{array}
\right]  :=\left[
\begin{array}[c]{c}%
v\left(  t_{n+1/2}\right)  -v_{S}\left(  t_{n+1/2}\right) \\
u\left(  t_{n+1}\right)  -u_{S}\left(  t_{n+1}\right)
\end{array}
\right]  ,\\
\mathbf{e}_{S,\Delta t}^{\left(  n+1\right)  }  &  :=\left[
\begin{array}[c]{c}%
e_{v,S,\Delta t}^{\left(  n+1/2\right)  }\\
e_{u,S,\Delta t}^{\left(  n+1\right)  }%
\end{array}
\right]  :=\left[
\begin{array}[c]{c}%
v_{S}\left(  t_{n+1/2}\right)  -v_{S}^{\left(  n+1/2\right)  }\\
u_{S}\left(  t_{n+1}\right)  -u_{S}^{\left(  n+1\right)  }%
\end{array}
\right]  .
\end{align*}

Following similar arguments as in \cite[section 3.2]{grote_sauter_1}, we derive a recurrence relation for $\mathbf{e}_{S,\Delta t}^{(n+1)}$, which can be solved and eventually yields the explicit error representation
\begin{align}
\left[
\begin{array}[c]{c}%
e_{v,S,\Delta t}^{\left(  n+1/2\right)  }\\
e_{u,S,\Delta t}^{\left(  n+1\right)  }%
\end{array}
\right]   
&  = \mathfrak{B}^{n}\left[
\begin{array}[c]{c}%
e_{v,S,\Delta t}^{\left(  1/2\right)  }\\
e_{u,S,\Delta t}^{\left(  1\right)  }%
\end{array}
\right]  
+ \Delta t \sum_{\ell=1}^{n-1}\mathfrak{B}^{\ell}
\left( 
\mbox{\boldmath$ \sigma$}^{\left( n-\ell \right) } - 
\mbox{\boldmath$ \sigma$}^{\left(  n-\ell+1\right)  } \right) \label{errorreprfu}\\
&\quad  +\Delta t \mbox{\boldmath$ \sigma$}^{\left(  n\right)  } - \Delta t\mathfrak{B}^{n} \mbox{\boldmath$ \sigma$}^{\left(  1\right)  }, \nonumber%
\end{align}
where
\begin{equation}%
\mbox{\boldmath$ \sigma$}^{\left(  n\right)  }
= \frac{1}{\Delta t} \left[
\begin{array}[c]{c}%
-\dfrac{u_{S}\left(  t_{n+1}\right)  -u_{S}\left(  t_{n}\right)  }{\Delta
t}+v_{S}\left(  t_{n+1/2}\right) \\
u_{S}\left(  t_{n}\right)  
+ \left(A^{S,p,\nu}\right)^{-1}   \dfrac{v_{S}\left(
t_{n+1/2}\right)  -v_{S}\left(  t_{n-1/2}\right)  }{\Delta t} 
\end{array}
\right]. \label{defboldsigmab}
\end{equation}

\subsubsection{Stability}

The convergence analysis consists of an estimate for the stability of the iteration operator $\mathfrak{B}$  and a consistency estimate. 
We begin with the stability analysis.

\begin{theorem}
[Stability]\label{TheoStabLeapFrog}
Let the CFL condition (\ref{CFLtotal}) be
satisfied. Then the leapfrog scheme (\ref{leap_frog_lts_fd}) is stable,%
\[
\left\Vert v_{S}^{\left(  n{+1/2}\right)  }\right\Vert_{\mathcal{T}} +\left\Vert
u_{S}^{\left(  n\right)  }\right\Vert_{\mathcal{T}} \leq C_{0}\left(  \left\Vert
v_{S}^{\left(  1/2\right)  }\right\Vert_{\mathcal{T}} +\left\Vert u_{S}^{\left(  1\right)
}\right\Vert_{\mathcal{T}} \right)  ,
\]
where $C_{0}$ is independent of $n$, $\Delta t$, $h$, and $T$.
\end{theorem}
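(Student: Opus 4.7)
The plan is to exploit the conservation of the discrete energy $E^{n+1/2}$ established in the corollary above -- namely $E^{n+1/2} = E^{(1/2)}$ for all $n \geq 0$ -- and to sandwich this energy between $\Delta t$-independent multiples of $\|v_S^{(n+1/2)}\|_{\mathcal{T}}^2 + \|u_S^{(n)}\|_{\mathcal{T}}^2$ at the current time and of $\|v_S^{(1/2)}\|_{\mathcal{T}}^2 + \|u_S^{(1)}\|_{\mathcal{T}}^2$ at the initial time. Chaining the two estimates through energy conservation will then yield the stability bound.

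For the lower bound, I would apply the polarization identity
\[
a^{p,\nu}\bigl(u_S^{(n+1)}, u_S^{(n)}\bigr) = \frac{1}{4}\,a^{p,\nu}\bigl(u_S^{(n+1)}+u_S^{(n)}\bigr) - \frac{\Delta t^2}{4}\,a^{p,\nu}\bigl(v_S^{(n+1/2)}\bigr)
\]
(using $u_S^{(n+1)} - u_S^{(n)} = \Delta t\,v_S^{(n+1/2)}$) to the definition of $E^{n+1/2}$, and combine it with the spectral bounds of Theorem \ref{Coreigsys} to obtain
\[
8\,E^{n+1/2} \geq \bigl(4 - \Delta t^2 \lambda_{p,\nu}^{\max}\bigr)\|v_S^{(n+1/2)}\|_{\mathcal{T}}^2 + \lambda_{p,\nu}^{\min}\,\|u_S^{(n+1)}+u_S^{(n)}\|_{\mathcal{T}}^2.
\]
The CFL condition (\ref{CFLtotal}) guarantees $4 - \Delta t^2\lambda_{p,\nu}^{\max} \geq \nu/(\nu+1) > 0$, while Theorem \ref{Coreigsys} provides $\lambda_{p,\nu}^{\min} \geq c_\nu > 0$ (with $c_\nu$ uniformly positive in $\Delta t$ under CFL). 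The parallelogram identity
$2\|u_S^{(n+1)}\|_{\mathcal{T}}^2 + 2\|u_S^{(n)}\|_{\mathcal{T}}^2 = \|u_S^{(n+1)}+u_S^{(n)}\|_{\mathcal{T}}^2 + \Delta t^2\|v_S^{(n+1/2)}\|_{\mathcal{T}}^2$
then transfers the control on $\|u_S^{(n+1)}+u_S^{(n)}\|_{\mathcal{T}}^2$ into the desired control on $\|u_S^{(n)}\|_{\mathcal{T}}^2$, with the residual $\|v_S^{(n+1/2)}\|^2$ term absorbed by the coefficient $\nu/(\nu+1)$ using CFL once more.

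For the matching upper bound at $n=0$, I would substitute $u_S^{(0)} = u_S^{(1)} - \Delta t\,v_S^{(1/2)}$ into $a^{p,\nu}(u_S^{(1)}, u_S^{(0)})$, invoke the polarization identity a second time, and carefully balance the resulting terms using $\Delta t^2\lambda_{p,\nu}^{\max} \leq 4-\nu/(\nu+1)$ so that all $\lambda_{p,\nu}^{\max}$ factors are cancelled by the $\Delta t^2$ generated by the polarization. Combined with energy conservation, this yields
\[
\|v_S^{(n+1/2)}\|_{\mathcal{T}}^2 + \|u_S^{(n)}\|_{\mathcal{T}}^2 \;\lesssim\; E^{n+1/2} = E^{(1/2)} \;\lesssim\; \|v_S^{(1/2)}\|_{\mathcal{T}}^2 + \|u_S^{(1)}\|_{\mathcal{T}}^2,
\]
and the claim follows after taking square roots and using $\sqrt{a^2+b^2} \leq a+b$. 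The hardest step will be the upper bound on $E^{(1/2)}$: the cross term $a^{p,\nu}(u_S^{(1)}, u_S^{(0)})$ only admits the naive estimate involving $\lambda_{p,\nu}^{\max} = \mathcal{O}(\Delta t^{-2})$, and the stabilization margin $\nu/(\nu+1)$ built into the CFL condition will need to be exploited in a tight way to absorb this factor against the finite-difference $\Delta t^2$ and obtain a genuinely $\Delta t$-independent constant $C_0$.
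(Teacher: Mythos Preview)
Your energy-conservation route is different from the paper's, and it contains a gap that you have correctly flagged as ``the hardest step'' but not resolved---and which in fact cannot be resolved within this framework. The discrete energy controls $\|v_S^{(n+1/2)}\|_{\mathcal T}^2$ together with $a^{p,\nu}(u_S^{(n+1)},u_S^{(n)})$, a quantity that lives at the scale of the \emph{$a^{p,\nu}$-norm} of $u$, not its $\|\cdot\|_{\mathcal T}$-norm. Your lower bound converts $a^{p,\nu}(u,u)$ into $\lambda^{\min}_{p,\nu}\|u\|_{\mathcal T}^2$; your upper bound on $E^{1/2}$ is forced to pay a factor $\lambda^{\max}_{p,\nu}$ in front of $\|u_S^{(1)}\|_{\mathcal T}^2$. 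Concretely, take $u_S^{(0)}=u_S^{(1)}=\eta$ a unit eigenfunction for $\lambda^{\max}_{p,\nu}$ and $v_S^{(1/2)}=0$: then $2E^{1/2}=a^{p,\nu}(\eta,\eta)=\lambda^{\max}_{p,\nu}$, and there is no $\Delta t^2$ anywhere to absorb it---the ``finite-difference $\Delta t^2$'' you hope for arises only from $u_S^{(1)}-u_S^{(0)}$, which vanishes in this example. Chaining lower and upper bounds through $E^{n+1/2}=E^{1/2}$ therefore produces a constant of order $\sqrt{\lambda^{\max}_{p,\nu}/\lambda^{\min}_{p,\nu}}$, the square root of the condition number of $A^{S,p,\nu}$, which scales like $h_{\mathrm c}^{-1}\sim\Delta t^{-1}$ under CFL and is \emph{not} independent of $\Delta t$ and $h$.

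The paper avoids this by diagonalising: it expands in the $(\cdot,\cdot)_{\mathcal T}$-orthonormal eigenbasis $\{\eta_{p,\nu}\}$ of $a^{p,\nu}$ supplied by Theorem~\ref{Coreigsys}, which decouples the one-step map $\mathfrak B$ into scalar $2\times 2$ blocks, and then follows the mode-by-mode argument of \cite[Theorem~15]{grote_sauter_1}. Working one eigenvalue at a time lets the bound on each block depend only on $\Delta t^2\lambda_j$---confined by Theorem~\ref{Coreigsys} to a fixed compact subinterval of $(0,4)$---rather than on $\lambda_j$ itself. That is exactly the cancellation the global energy argument cannot see.
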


\begin{proof}
Using the eigensystem of $a^{p,\nu}$ introduced in Theorem \ref{Coreigsys}, the proof follows along the same lines as the proof of \cite[Theorem 15]{grote_sauter_1}.
In the present case, however, the eigenfunctions $\eta_{p,\nu}$ are orthonormal with respect to the scalar product $\left(\cdot,\cdot\right)_{\mathcal{T}}$.
\end{proof}

\subsubsection{Error Estimates}

In this section we first estimate the discrete error $e_{u,S,\Delta t}^{\left(  n+1\right)  }$. 
Standard estimates on the semi-discrete error and the error due to mass-lumping then lead to an estimate of the total error $e_{u}^{(n+1)}$.

\begin{theorem}
\label{Theotimedisc}
Assume that (\ref{wellposed}), (\ref{hsmallerh0}), (\ref{interpolationestimate}), (\ref{ceqCeq}),
(\ref{hfphc}) and (\ref{CFLtotal}) hold and  
let the solution of the semi-discrete equation (\ref{spacedisc}) satisfy 
$u_{S}\in W^{5,\infty}\left(  \left[  0,T\right]  ;L^{2}\left(\Omega\right)  \right)  $. 
Then the fully discrete solution $u_{S}^{\left(  n+1\right)  }$ of (\ref{leap_frog_lts_fd}) satisfies the error estimate%
\[
\left\Vert e_{u,S,\Delta t}^{\left(  n+1\right)  }\right\Vert \leq C\Delta
t^{2}\left(  1+T\right)  \mathcal{M}\left(  u_{S}\right)
\]
with
\begin{equation*}
\mathcal{M}\left(  u_{S} \right)  := \max_{2\leq\ell\leq
5}\left\Vert \partial_{t}^{\ell}u_{S}\right\Vert _{L^{\infty}\left(  \left[
0,T\right]  ; L^2(\Omega)  \right)  }  
\end{equation*}
and a constant $C$, which is independent of $n$, $\Delta t$, $T$, $h$, $p$ and $u_{S}$.
\end{theorem}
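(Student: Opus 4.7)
The plan is to start from the explicit error representation (\ref{errorreprfu}), apply the stability Theorem \ref{TheoStabLeapFrog} to control the propagation of both the initial error and the consistency terms through powers of $\mathfrak{B}$, and then bound $\boldsymbol{\sigma}^{(n)}$ and its differences by $O(\Delta t^2)$ via Taylor expansion. The key point is that each consistency term $\boldsymbol{\sigma}^{(n)}$ contributes $O(\Delta t)$ (due to the explicit $1/\Delta t$ prefactor), while the differences $\boldsymbol{\sigma}^{(n-\ell)}-\boldsymbol{\sigma}^{(n-\ell+1)}$ pick up an extra factor of $\Delta t$, so that the sum over $\ell$ produces the $(1+T)$ factor after multiplication by the outer $\Delta t$.

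\textbf{Bounding the initial error.} The values $u_S^{(0)}=r_Su_0$ and $u_S^{(1)}=r_Su_0+\Delta t\,r_Sv_0-\tfrac{\Delta t^{2}}{2}A^S u_0$ agree with $u_S(0)$ and with the Taylor expansion of $u_S(\Delta t)$ up to $O(\Delta t^3)$, using $\ddot u_S(0)=-A^S u_S(0)$ from the semi-discrete equation. Similarly, $v_S^{(1/2)}=v_S(0)-\tfrac{\Delta t}{2}A^S u_S(0)$ matches $v_S(\Delta t/2)$ to $O(\Delta t^2)$. Hence $\|\mathbf e^{(1)}_{S,\Delta t}\|\leq C\Delta t^2\mathcal M(u_S)$, which together with stability controls the first term $\mathfrak{B}^n \mathbf e^{(1)}_{S,\Delta t}$ in (\ref{errorreprfu}).

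\textbf{Bounding the consistency terms $\boldsymbol{\sigma}^{(n)}$.} The first component,
\[
\tfrac{1}{\Delta t}\bigl(v_S(t_{n+1/2})-\tfrac{u_S(t_{n+1})-u_S(t_n)}{\Delta t}\bigr),
\]
is a classical midpoint quadrature remainder and is $O(\Delta t^2\|\partial_t^3u_S\|_{L^\infty L^2})$. For the second component, I split
\[
(A^{S,p,\nu})^{-1}=\bigl(A^{S}\bigr)^{-1}+\bigl[(A^{S,p,\nu})^{-1}-(A^{S})^{-1}\bigr].
\]
Using the semi-discrete identity $(A^S)^{-1}\ddot u_S(t_n)=-u_S(t_n)$ together with the Taylor expansion
$\tfrac{v_S(t_{n+1/2})-v_S(t_{n-1/2})}{\Delta t}=\ddot u_S(t_n)+O(\Delta t^2\|\partial_t^4u_S\|)$,
the $(A^S)^{-1}$-part combines with $u_S(t_n)$ to produce an $O(\Delta t^2)$ remainder (bounded in $\|\cdot\|_{\mathcal T}$ via (\ref{lowevAs})). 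The correction term is controlled by Lemma \ref{LemInvDiff}: it is bounded by $\tfrac{\nu+1}{2\nu}\Delta t^2\cdot\|\ddot u_S(t_n)+O(\Delta t^2)\|_{\mathcal T}$. Dividing by $\Delta t$ yields $\|\boldsymbol{\sigma}^{(n)}\|\leq C\Delta t\,\mathcal M(u_S)$, with $C$ independent of $p$.

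\textbf{Bounding the differences $\boldsymbol{\sigma}^{(n-\ell)}-\boldsymbol{\sigma}^{(n-\ell+1)}$.} Writing $\boldsymbol{\sigma}^{(n)}=F(t_n)$ for a vector-valued function $F$ built from $u_S$ and its time derivatives (via the above analysis), another Taylor expansion gives $\|\boldsymbol{\sigma}^{(n-\ell)}-\boldsymbol{\sigma}^{(n-\ell+1)}\|\leq C\Delta t\cdot\Delta t\,\mathcal M(u_S)=C\Delta t^2\mathcal M(u_S)$. This relies on the same splitting as above but now applied to the time-incremented quantities, and again uses Lemma \ref{LemInvDiff} and the semi-discrete equation. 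Inserting this estimate into (\ref{errorreprfu}), applying Theorem \ref{TheoStabLeapFrog} to bound $\|\mathfrak{B}^\ell(\cdot)\|_{\mathcal T}$ uniformly in $\ell$, and using $n\Delta t\leq T$, yields
\[
\|e_{u,S,\Delta t}^{(n+1)}\|_{\mathcal T}\leq C\Delta t^2\mathcal M(u_S)+\Delta t\cdot(n-1)\cdot C\Delta t^2\mathcal M(u_S)+2\Delta t\cdot C\Delta t\,\mathcal M(u_S),
\]
which gives the claimed $C(1+T)\Delta t^2\mathcal M(u_S)$. Finally, the $\|\cdot\|$-norm and the $\|\cdot\|_{\mathcal T}$-norm are equivalent on $S$ by (\ref{ceqCeq}).

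\textbf{Main obstacle.} The delicate step is handling the second component of $\boldsymbol{\sigma}^{(n)}$, which contains the operator $(A^{S,p,\nu})^{-1}$ rather than the expected $(A^S)^{-1}$. Without the splitting and Lemma \ref{LemInvDiff}, the factor $1/\Delta t$ in $\boldsymbol{\sigma}^{(n)}$ together with the $p$-dependent structure of $A^{S,p,\nu}$ would obstruct a $p$-independent, $O(\Delta t^2)$ estimate. The splitting converts the potentially dangerous inverse into the semi-discrete inverse (which is controlled by the coercivity bound (\ref{lowevAs})) plus a correction whose norm is explicitly $O(\Delta t^2)$, and this is precisely what makes the scheme second-order accurate under the new, $p$-independent CFL condition (\ref{CFLtotal}).
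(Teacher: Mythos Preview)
Your proof is correct and follows essentially the same route as the paper: start from the error representation (\ref{errorreprfu}), apply the stability Theorem \ref{TheoStabLeapFrog} to each term, and control the consistency residuals $\boldsymbol{\sigma}^{(n)}$ and their differences via Taylor expansion together with the splitting $(A^{S,p,\nu})^{-1}=(A^S)^{-1}+\bigl[(A^{S,p,\nu})^{-1}-(A^S)^{-1}\bigr]$ and Lemma \ref{LemInvDiff}. The only cosmetic difference is the order of operations: the paper first Taylor-expands and isolates $\psi_S^{(k)}=u_S(t_k)+(A^{S,p,\nu})^{-1}\ddot u_S(t_k)$, then rewrites $\psi_S^{(k)}=\bigl((A^{S,p,\nu})^{-1}-(A^S)^{-1}\bigr)\ddot u_S(t_k)$ and bounds the remaining Taylor remainder via the eigenvalue bound $1/c_\nu$ from Theorem \ref{Coreigsys}, whereas you split the inverse first and bound $(A^S)^{-1}$ on the Taylor remainder directly via (\ref{lowevAs}); both give the same $O(\Delta t^2)$ outcome with $p$-independent constants.
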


\begin{proof}
The proof follows the lines of the proof of \cite[Theorem 16]{grote_sauter_1}, yet with
a few modifications to obtain estimates (\ref{estsigman}) and (\ref{estdiffsigmam}) below.
In particular, the proof now requires the new estimates of inverse operators in Lemma \ref{LemInvDiff}.

We apply the stability estimate to the second component of the error
representation (\ref{errorreprfu}). From Theorem \ref{TheoStabLeapFrog} and
(\ref{defboldsigmab}) we obtain\footnote{For a pair of functions
$\mathbf{v}=\left(  v_{1},v_{2}\right)  ^{\intercal}\in S \times S$ we use the
notation $\left\Vert \mathbf{v}\right\Vert _{\ell^{1}}:=\left\Vert
v_{1}\right\Vert_{\mathcal{T}} +\left\Vert v_{2}\right\Vert_{\mathcal{T}} $.}%
\begin{align}
\left\Vert e_{u,S,\Delta t}^{\left(  n+1\right)  }\right\Vert_{\mathcal{T}}  &  \leq
C_{0}\left\Vert \mathbf{e}_{S,\Delta t}^{\left(  1\right)  }\right\Vert
_{\ell^{1}} + C_{0}\, \Delta t \sum_{\ell=1}^{n-1} \left\Vert 
\mbox{\boldmath$ \sigma$}%
^{\left(  n - \ell \right)} - 
\mbox{\boldmath$ \sigma$}%
^{\left(  n - \ell + 1 \right)  } \right\Vert _{\ell^{1}} \nonumber\\
&\quad + C_{0}\,\Delta t\left\Vert
\mbox{\boldmath$ \sigma$}%
^{\left(  1\right)  }\right\Vert _{\ell^{1}} + \Delta t\left\Vert
\mbox{\boldmath$ \sigma$}%
^{\left(  n\right)  }\right\Vert _{\ell^{1}}.\label{eestmainform}
\end{align}
First, we consider the last term on the right-hand side of (\ref{eestmainform}). If we define
\begin{equation*}
\psi_S^{(k)} = u_{S}\left(  t_{k}\right)  + \left(A^{S,p,\nu}\right)^{-1}  \ddot{u}%
_{S}\left(  t_{k}\right) 
\end{equation*}
for all $k \in \left\{ 1, 3/2, 2, 5/2,\ldots,n \right\}$, we obtain by a Taylor argument and Theorem \ref{Coreigsys},
\begin{equation*}
\mbox{\boldmath$ \sigma$}^{\left(  n\right)  } = \dfrac{1}{\Delta t} 
\begin{bmatrix}
0 \\
\psi_{S}^{(n)}
\end{bmatrix} 
+ \dfrac{\Delta t}{24} \mathcal{R}_{n}^{\operatorname{I}},
\end{equation*}
where
\[
\left\Vert \mathcal{R}_{n}^{\operatorname{I}}\right\Vert _{\ell^{1}}%
\leq \left(  1+\frac{1}{c_{\nu}}\right)  \mathcal{M}%
_{n}\left(  u_{S}\right)
\]
with $c_\nu$ as defined in Theorem \ref{Coreigsys} and%
\[
\mathcal{M}_{n}\left(  u_{S} \right)  := \max_{2\leq\ell\leq5} \left\{ \max_{t \in \left[  t_{n-1/2},t_{n+2}\right]} \left\Vert \partial_{t}^{\ell}u_{S}(t) \right\Vert_{\mathcal{T}   } \right\}  .
\]
Since $u_S = - \left( A^S \right)^{-1} \ddot{u}_S$ (cf. (\ref{spacedisc})), we observe that
\begin{equation}
\psi_S^{(k)} = \left( \left( A^{S,p,\nu} \right)^{-1} - \left( A^S \right)^{-1} \right)  \ddot{u}_{S}\left(  t_{k}\right) .
\label{psiSk}
\end{equation}
Thus we can apply Lemma \ref{LemInvDiff} to estimate the norm of $\psi_S^{(k)}$ and it follows
\begin{equation}
\Delta t\left\Vert
\mbox{\boldmath$ \sigma$}%
^{\left(  n\right)  }\right\Vert _{\ell^{1}} \leq \dfrac{\Delta t^2}{24} \left( 24 \dfrac{\nu + 1}{2 \nu} + \dfrac{1}{c_\nu} + 1 \right) \mathcal{M}_{n}\left(  u_{S} \right).
\label{estsigman}
\end{equation}

Next, we investigate the summands in the second term of the right side of (\ref{eestmainform}), 
\begin{align*}
&  \mbox{\boldmath$ \sigma$}^{\left(  m\right)  } - \mbox{\boldmath$ \sigma$}^{\left(  m+1\right)  } =\\
& \!\!\left[  \!\!\!
\begin{array}[c]{c}%
\dfrac{ u_{S}\left(t_{m+2}\right) - 2 u_{S}\left(t_{m+1}\right) + u_{S}\left(t_{m}\right) }{\Delta t^{2}}
+ \dfrac{ v_{S}\left(t_{m+1/2}\right) - v_{S}\left(t_{m+3/2}\right) }{\Delta t}\\
\dfrac{ u_{S}\left(t_{m}\right) - u_{S}\left(t_{m+1}\right) }{\Delta t} 
- \left( A^{S,p,\nu} \right)^{-1} \! \left( 
\dfrac{ v_{S}\left(t_{m+3/2}\right) - 2 v_{S}\left(t_{m+1/2}\right) + v_{S}\left(t_{m-1/2}\right) }{\Delta t^{2}} 
\right)
\end{array}
\!\!\!\right].
\end{align*}
Again, by a Taylor argument and Theorem \ref{Coreigsys}, we deduce for any $m \geq1$%
\begin{equation*}
\mbox{\boldmath$ \sigma$}^{\left(  m\right)  }-%
\mbox{\boldmath$ \sigma$}^{\left(  m+1\right)  } = \left[
\begin{array}[c]{c}%
0\\
- \dot{\psi}_{S}^{\left(m+1/2\right)}
\end{array}
\right]  
+\frac{\left(  \Delta t\right)  ^{2}}{24}\mathcal{E}_{m}^{\operatorname{I}} 
\end{equation*}
with%
\[
\left\Vert \mathcal{E}_{m}^{\operatorname{I}}\right\Vert _{\ell^{1}}%
\leq \left(  4+\frac{2}{c_{\nu}}\right)  \mathcal{M}_{m}\left(  u_{S} \right).
\]
We now apply (\ref{psiSk}) and Lemma \ref{LemInvDiff} to $\dot{\psi}_{S}^{\left(m+1/2\right)}$ which yields the estimate
\begin{equation}
\left\| 
\mbox{\boldmath$ \sigma$}^{\left(  m\right)  } -\mbox{\boldmath$ \sigma$}^{\left(  m+1\right)  } 
\right\|_{\ell^{1}} 
\leq \dfrac{\Delta t^2}{24} \left( 12 \dfrac{\nu + 1}{\nu} + \dfrac{2}{c_\nu} + 4 \right) \mathcal{M}_{m}\left(  u_{S} \right).
\label{estdiffsigmam}
\end{equation}

Inserting (\ref{estsigman}) and (\ref{estdiffsigmam}) into (\ref{eestmainform}) leads to%
\begin{align}
\left\Vert e_{u,S,\Delta t}^{\left(  n+1\right)  }\right\Vert_{\mathcal{T}}  &\leq
C_{0} \left\Vert \mathbf{e}_{S,\Delta t}^{\left(  1\right)  }\right\Vert_{\ell^{1}} + C_{0} \frac{\Delta t^{2}}{24} \left( 12 \dfrac{\nu + 1}{\nu} + \dfrac{2}{c_\nu} + 4 \right) \Delta t \sum_{\ell=1}^{n-1}\mathcal{M}_{n-\ell}\left(  u_{S} \right)
\nonumber\\
& + \frac{ \Delta t^{2} }{24} \left( 24 \dfrac{\nu + 1}{\nu} + \dfrac{1}{c_\nu} + 1 \right)  \left(  \mathcal{M}_{n}\left(  u_{S} \right)  + C_{0}\mathcal{M}_{1}\left(  u_{S} \right)  \right) \nonumber\\
&  \overset{(\ref{ceqCeq})}{\leq} C_{0}\left\Vert \mathbf{e}_{S,\Delta t}^{\left(  1\right)
}\right\Vert _{\ell^{1}} \label{ruspu}\\
& + \frac{ \Delta t^{2} }{24} C_{\operatorname*{eq}} \left(
C_{0}T \left( 12 \dfrac{\nu + 1}{\nu} + \dfrac{2}{c_\nu} + 4 \right)  + \left( 12 \dfrac{\nu + 1}{\nu} + \dfrac{1}{c_\nu} + 1 \right) \left( 1+C_{0} \right) \right)  \mathcal{M}\left(  u_{S} \right).
\nonumber
\end{align}
To estimate the initial error $\mathbf{e}_{S,\Delta t}^{\left(
1\right)  }$, we again use a Taylor argument as in \cite[Proof of Theorem 16]{grote_sauter_1}, which  leads to
\begin{equation} \label{initerr1}
\left\Vert u_{S}\left(  t_{1}\right)  -u_{S}^{\left(  1\right)  }\right\Vert_{\mathcal{T}}
\leq\frac{3}{2}\Delta t^{3} C_{\operatorname*{eq}} \mathcal{M}\left(  u_{S} \right)
\end{equation}
and
\begin{equation} \label{initerr2}
\left\Vert v_{S}\left(  t_{1/2}\right)  -v_{S}^{\left(  1/2\right)
}\right\Vert_{\mathcal{T}}  \leq\frac{3}{2} \Delta t^2 C_{\operatorname*{eq}} \mathcal{M}\left(  u_{S} \right)  .
\end{equation}
In summary, we have estimated the initial error by%
\begin{equation}
\left\Vert \mathbf{e}_{S,\Delta t}^{\left(  1\right)  }\right\Vert _{\ell^{1}%
} \leq \frac{3}{2} \Delta t^2 \left(  1+\Delta t\right)
C_{\operatorname*{eq}} \mathcal{M}\left(  u_{S} \right)  . \label{initerror}%
\end{equation}
The combination of (\ref{ruspu}), (\ref{initerror}) and (\ref{ceqCeq}) concludes the proof.%
\end{proof}

Theorem \ref{Theotimedisc} can be combined with known error estimates for the
semi-discrete error $\mathbf{e}_{S}^{\left(  n+1\right)  }$ to obtain an error
estimate of the total error.

\begin{theorem}
\label{TheoMain}
Assume that (\ref{wellposed}), (\ref{hsmallerh0}), (\ref{interpolationestimate}), (\ref{ceqCeq}), (\ref{hfphc}) and (\ref{CFLtotal}) hold 
and that the exact solution of (\ref{waveeq}) satisfies $ u\in W^{8,\infty}\left(  \left[  0,T\right] ; H^{m+1}\left(  \Omega\right)  \right)  $.
Then, the corresponding fully discrete Galerkin FE formulation with local
time-stepping (\ref{leap_frog_lts_fd}) has a unique solution $u_{S}^{\left(
n+1\right)  }$ which satisfies the error estimate%
\[
\left\Vert u(t_{n+1})-u_{S}^{\left(  n+1\right)  }\right\Vert \leq C\left(
1+T\right)  \left(  h^{m+1}+\Delta t^{2}\right)  \mathcal{Q}\left(
u \right)
\]
with%
\[
\mathcal{Q}\left(  u \right)  := \max_{2\leq\ell\leq5} \left\{ \left(
1+C_{\ell}^{\prime}h^{m+1}\left(  1+T\right)  \right)  \left\Vert u\right\Vert
_{W^{\ell+3,\infty}\left(  \left[  0,T\right]  ;H^{m+1}\left(  \Omega\right)
\right)  } \right\}
\]
and constants $C_{\ell}^{\prime}$ which are independent of $n$, $\Delta t$,
$h$, $p$, and the final time $T$.
\end{theorem}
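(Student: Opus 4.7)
The plan is to split the total error via (\ref{defspliterror}) as $e_u^{(n+1)} = e_{u,S}^{(n+1)} + e_{u,S,\Delta t}^{(n+1)}$ and then bound each contribution separately by the triangle inequality. The fully discrete piece is already controlled by Theorem \ref{Theotimedisc},
\[
\|e_{u,S,\Delta t}^{(n+1)}\| \leq C\,\Delta t^2\,(1+T)\,\mathcal{M}(u_S),
\]
so the two outstanding tasks are (i) supplying a classical $L^2$ semi-discrete error estimate for the conforming mass-lumped Galerkin FE discretization of the wave equation, and (ii) bounding the quantity $\mathcal{M}(u_S)$, which involves the semi-discrete solution, in terms of norms of the exact solution $u$.

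For (i), I would invoke a standard Baker-type argument adapted to the mass-lumped setting (in the spirit of \cite{COHEN,mass_lumping_2d}), using the interpolation estimate (\ref{interpolationestimate}) with $s=m+1$, the quadrature-accuracy condition (\ref{ml_exactnesscondition}), and the continuity and coercivity of $a$, combined with a Gronwall-type argument on an energy functional. This yields, for each $k\in\{0,\dots,5\}$,
\[
\bigl\|\partial_t^k u(t)-\partial_t^k u_S(t)\bigr\| \leq C_k\, h^{m+1}\,(1+T)\,\|u\|_{W^{k+3,\infty}([0,T];H^{m+1}(\Omega))},
\]
which for $k=0$ directly controls the semi-discrete part $e_{u,S}^{(n+1)}$. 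The shift by $3$ in the time regularity index accounts for the customary loss incurred when deriving $L^2$ estimates for second-order hyperbolic problems with mass lumping.

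For (ii), the crucial observation is that the semi-discrete problem (\ref{spacedisc}) is linear with time-independent coefficients, so $\partial_t$ commutes with the FE Galerkin approximation and mass lumping. Differentiating the semi-discrete equation $\ell$ times in time therefore allows the same estimate from (i) to be applied to $\partial_t^\ell u_S$, yielding, together with the triangle inequality,
\[
\|\partial_t^\ell u_S\|_{L^\infty([0,T];L^2(\Omega))} \leq \bigl(1 + C_\ell'\,h^{m+1}(1+T)\bigr)\,\|u\|_{W^{\ell+3,\infty}([0,T];H^{m+1}(\Omega))},
\]
for $2\leq \ell \leq 5$. Taking the maximum over $\ell$ yields exactly the quantity $\mathcal{Q}(u)$ from the statement, hence $\mathcal{M}(u_S)\leq \mathcal{Q}(u)$. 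Combining this inequality with Theorem \ref{Theotimedisc} and with the $k=0$ case of (i) via the triangle inequality produces the claimed bound; existence and uniqueness of $u_S^{(n+1)}$ are immediate from the positive-definiteness of $(\cdot,\cdot)_{\mathcal{T}}$.

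The main obstacle is not conceptual but bookkeeping-heavy: one must track the exact regularity assumptions needed to justify the semi-discrete $L^2$ estimate in the mass-lumped setting, verify that the constants are independent of $p$ (which requires the $p$-independent stability and coercivity obtained in Theorems \ref{TheoCont} and \ref{Coreigsys}), and confirm that the loss of three time derivatives in (i) matches the hypothesis $u\in W^{8,\infty}([0,T];H^{m+1}(\Omega))$, since $\ell+3\leq 8$ for $\ell\leq 5$. Once these pieces align, the final combined rate $h^{m+1}+\Delta t^2$ follows directly from the triangle inequality.
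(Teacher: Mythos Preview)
Your proposal is correct and follows essentially the same route as the paper: split the error via (\ref{defspliterror}), control the fully discrete part by Theorem \ref{Theotimedisc}, invoke the Baker--Dougalis $L^2$ estimate for the mass-lumped semi-discrete problem (the paper cites \cite[Thm.~4.1]{Baker2} explicitly and verifies its hypotheses), and then bound $\mathcal{M}(u_S)$ by $\mathcal{Q}(u)$ via the triangle inequality and the higher-order time-derivative versions of that same semi-discrete estimate. One small remark: the $p$-independence of the constants is already built into Theorem \ref{Theotimedisc}; the semi-discrete estimate (i) does not involve $p$ at all, so Theorems \ref{TheoCont} and \ref{Coreigsys} are not needed at this stage.
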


\begin{proof}
The existence of the semi-discrete solution $u_{S}$ follows from \cite[Theorem 3.1]{Baker}, which directly implies the existence of the fully discrete stabilized LTS-Galerkin FE solution. 
Next, we split the total error
\[
\mathbf{e}^{\left(  n+1\right)  }=\left(  v\left(  t_{n+1/2}\right)
-v_{S}^{\left(  n+1/2\right)  },u\left(  t_{n+1}\right)  -u_{S}^{\left(
n+1\right)  }\right)  ^{\intercal}%
\]
into a semi-discrete and a fully discrete contribution, as in (\ref{defspliterror}). 

Before applying Theorem \ref{Theotimedisc} to bound the discrete error contribution, we must ensure that the semi-discrete solution $u_S$ is sufficiently regular.
In \cite{MuellerSchwab16} the regularity of solutions to wave equations is investigated in an
abstract setting for an evolution triplet of Hilbert spaces; 
hence, we can apply these results to the semi-discrete formulation (\ref{spacedisc}) with mass-lumping. 
Thus, we conclude that the semi-discrete solution $u_{S}$ inherits the same temporal regularity from $u\in W^{8,\infty}\left(  \left[0,T\right]  ;H^{m+1}\left(  \Omega\right)  \right)  $, 
i.e., $u_{S}\in W^{8,\infty}\left(  \left[  0,T\right]  ;H^{1}\left(  \Omega\right)  \right)$
and therefore we can apply Theorem \ref{Theotimedisc}. 

To estimate the remaining error 
\[
\mathbf{e}_{S}^{\left(  n+1\right)  } = 
\begin{bmatrix}
v\left(  t_{n+1/2}\right)
-v_{S}\left(  t_{n+1/2}\right)  \\
u\left(  t_{n+1}\right)  -u_{S}\left(
t_{n+1}\right)
\end{bmatrix}
\]
in the semi-discrete solution $u_{S}$ with mass-lumping, we shall apply \cite[Thm. 4.1]{Baker2}.
Hence, we now verify the assumptions stated in that Theorem. 
Since the space $S$ consists of piecewise polynomials, any $\varphi\in S$ satisfies 
$\left.\varphi\right\vert_{\tau} \in C^{m+1}\left(  \tau\right)$ for all $\tau \in \mathcal{T}$, which corresponds to condition \cite[(2.4)]{Baker2};
note that the index $k$ in \cite[Prop. 4.1]{Baker2} is denoted by $m$ in our paper. 
Assumption \cite[(2.5)]{Baker2} is equivalent to (\ref{interpolationestimate}).

Next, we verify the conditions in \cite[Thm. 4.1]{Baker2} related to the accuracy of mass-lumping. 
From \cite[Lem. 5.2]{mass_lumping_2d}, we conclude that for all
$\chi,\psi \in S$ and $1\leq r,\mu\leq m-1$ it holds%
\[
\left\vert \left(  \chi,\psi\right)  -\left(  \chi,\psi\right)  _{\mathcal{T}%
}\right\vert \leq Ch^{r+\mu}\left(  \sum_{\tau\in\mathcal{T}}\left\Vert
\chi\right\Vert _{H^{r+\mu}\left(  \tau\right)  }^{2}\right)  ^{1/2}\left(
\sum_{\tau\in\mathcal{T}}\left\Vert w\right\Vert _{H^{\mu}\left(  \tau\right)
}^{2}\right)  ^{1/2},
\]
which is condition \cite[(3.5)]{Baker2} for $1\leq r,\mu\leq m-1$ and $q=2$ 
(Note that $H^{r+\mu}\left(  \Omega\right)  \hookrightarrow C^{0}\left(
\overline{\Omega}\right)  $ for $d \leq 3$.). 
For $q\geq2$, and the same range of $r,\mu$, condition \cite[(3.5)]{Baker2} follows from the continuous embedding
$W^{r+\mu,q}\left(  \tau\right)  \hookrightarrow H^{r+\mu}\left(  \tau\right)  $.
Inspection of the proof of \cite[Lem. 5.2]{mass_lumping_2d} shows that the claim also holds for $r=0$ 
(by choosing $\widehat{\Pi}_{p-1}$ as the zero operator in \cite[(5.10)]{mass_lumping_2d}). 
Finally, the choice $\mu=1,2$ is always allowed since (\ref{ml_exactnesscondition}) ensures that \cite[(5.10)]{mass_lumping_2d} holds.

Since the Galerkin operator $A^{S}$ is defined via the (unperturbed) bilinear
form $\left(  A^{S}u,v\right)  _{\mathcal{T}}:=a\left(  u,v\right)  $,
conditions \cite[(3.6) and (3.7)]{Baker2} are automatically satisfied. 
Condition \cite[(3.8)]{Baker2} is also satisfied, because the right-hand side in (\ref{waveeq}) is zero, whereas \cite[(3.9)]{Baker2} follows from (\ref{ceqCeq}). 
Finally, the coercivity of the bilinear form in \cite[(3.10)]{Baker2} follows from (\ref{wellposedc}).
The hypotheses on regularity \cite[(4.22)]{Baker2} are satisfied by assumption.

Hence, we can use \cite[Theorem 4.1]{Baker2} to obtain
\begin{equation}
\left\Vert \partial_{t}^{\ell}\left(  u-u_{S}\right)  \right\Vert _{ L^{\infty} \left(  \left[  0,T\right]  ;L^{2}\left(  \Omega\right)  \right)  } \leq
C_{\ell}h^{m+1} \sum_{k=\ell}^{\ell+3}  \left\Vert \partial_{t}^{k}u\right\Vert _{L^{2}\left(  \left[
0,T\right]  ;H^{m+1}\left(  \Omega\right)  \right)  }  
\label{Bakerest}%
\end{equation}
for $\ell=0$. 
Inspection of the proof in \cite{Baker2} shows that the estimate also holds for
$\ell\geq1$, provided the right-hand side in (\ref{Bakerest}) exists.
By applying a H\"{o}lder inequality to the summands of the right-hand side in (\ref{Bakerest}), we obtain%
\[
\left\Vert \partial_{t}^{k}u\right\Vert _{L^{2}\left(  \left[
0,T\right]  ;H^{m+1}\left(  \Omega\right)  \right)  }\leq\sqrt{T}\left\Vert
\partial_{t}^{k}u\right\Vert _{L^{\infty}\left(  \left[  0,T\right]
;H^{m+1}\left(  \Omega\right)  \right)  },
\]
which implies for $\ell\leq 5$ that%
\[
\left\Vert \partial_{t}^{\ell}\left(  u-u_{S}\right)  \right\Vert _{L^{\infty
}\left(  \left[  0,T\right]  ;L^{2}\left(  \Omega\right)  \right)  }\leq
C_{\ell}^{\prime}h^{m+1} (1+T) \left\Vert u\right\Vert
_{W^{\ell+3,\infty}\left(  \left[  0,T\right]  ;H^{m+1}\left(  \Omega\right)
\right)  }%
\]
holds for a constant $C_{\ell}^{\prime}$ independent of the final time $T$. 
From Theorem \ref{Theotimedisc}, we thus obtain
\[
\left\Vert u(t_{n+1})-u_{S}^{\left(  n+1\right)  }\right\Vert \leq C\left(
1+T\right)  \left(  h^{m+1}+\Delta t^{2}\right)  \max\left\{  \mathcal{M}%
\left(  u_{S} \right)  ,\left\Vert u\right\Vert _{W^{3,\infty}\left(
\left[  0,T\right]  ;H^{m+1}\left(  \Omega\right)  \right)  }\right\}  .
\]

Finally, it remains to estimate $\mathcal{M}\left(  u_{S} \right)  $ in terms of $u$. 
A triangle inequality yields%
\begin{align*}
\left\Vert \partial_{t}^{\ell}u_{S}\right\Vert _{L^{\infty}\left(  \left[
0,T\right]  ; L^2(\Omega)  \right)  }  &  \leq
\left\Vert \partial_{t}^{\ell}u\right\Vert _{L^{\infty}\left(  \left[  0,T\right]
; L^2(\Omega) \right)  } 
+ \left\Vert \partial_{t}^{\ell} \left( u_{S}-u\right)  \right\Vert _{L^{\infty}\left(  \left[  0,T\right];L^2(\Omega)  \right)  }\\
&  \leq C_{\operatorname*{eq}} \left(  1+C_{\ell}^{\prime}h^{m+1}\left(  1+T\right)  \right)
\left\Vert u\right\Vert _{W^{\ell+3,\infty}\left(  \left[  0,T\right]
;H^{m+1}\left(  \Omega\right)  \right)  }.
\end{align*}
Therefore, we conclude that
\[
\max_{2\leq\ell\leq5}\left\Vert \partial_{t}^{\ell
}u_{S}\right\Vert _{L^{\infty}\left(  \left[  0,T\right]  ; L^2(\Omega)  \right)  } \leq C_{\operatorname*{eq}} \mathcal{Q}\left(  u\right)
\]
holds, which, together with the triangle inequality, leads to the assertion.
\end{proof}

\section{Numerical results}
Here we present a series of numerical examples, which illustrate the accuracy, enhanced stability and energy conservation of the \emph{stabilized LF-LTS} method from Section \ref{sec:stablflts}. 
First, we apply the stabilized LF-LTS method to a smooth initial condition and verify that it achieves the expected rate of convergence and conserves the energy.
Next, we purposely choose a critical, unstable time-step and initial condition for the original LF-LTS method without stabilization \cite{DiazGrote09} to demonstrate that the new stabilized version nonetheless remains stable for all time.

For these experiments, we consider the wave equation (\ref{model problem}) in the one-dimensional unit interval $\Omega = (0,1)$ with homogeneous Dirichlet boundary conditions, i.e. $\Gamma = \Gamma_D$, and $c \equiv 1$.
The computational domain $\Omega$ separates into a coarse part, $\Omega_{\operatorname*{c}} = (0,0.9)$, and a ``locally refined" part, $\Omega_{\operatorname*{f}} = [0.9,1)$. 
Inside $\Omega_{\operatorname*{c}}$ and $\Omega_{\operatorname*{f}}$, we use an equidistant mesh with mesh size $h_{\operatorname*{c}}$ or $h_{\operatorname*{f}}$, respectively, with $h_{\operatorname*{f}} = h_{\operatorname*{c}}/p$, $p \geq 2$.
For the spatial discretization, we always use piecewise linear $H^1$-conforming finite elements with mass-lumping, as described in Section \ref{RemMasslumping}.
This simple setting is appropriate to 
demonstrate the \emph{stability} and \emph{convergence} properties of the stabilized LF-LTS method.

To conclude this section, we consider an application of the stabilized LF-LTS
method on a two-dimensional domain with a reentrant corner where the solution
exhibits a characteristic singular behavior. We verify that optimal convergence
rates are retained if the mesh is suitably graded towards the reentrant corner.

\subsection{Convergence and energy conservation}
\label{SecNumStabLTS}

First, we consider a smooth solution of (\ref{model problem}) with initial condition
\begin{equation}
\left\{
\begin{aligned}
u_0(x) &= \exp\left(- 400 \left( x - \dfrac{1}{2} \right)^2 \right), \\
v_0(x) &= 0.
\end{aligned}
\right.
\label{IniCondGauss}
\end{equation}
We now apply the stabilized LF-LTS method with $h_{\operatorname*{c}}=0.01$, $h_{\operatorname*{f}} = h_{\operatorname*{c}}/1000$ in $\Omega_{\operatorname*{f}}$. 
Hence for each global time-step $\Delta t$, the LF-LTS method takes $p=1000$ local time-steps of size $\Delta t / p$ inside $\Omega_{\operatorname*{f}}$.
Clearly, a value as high as $p=1000$ is not necessarily intended for practical use but merely to demonstrate the robustness of the stabilized LF-LTS method with respect to $p$.

In Fig. \ref{FigNumSolT2}, we show the exact solution at $t=2$ together with the stabilized LF-LTS numerical solution with $\nu = 0.01$.
For the sake of comparison, we also show the numerical solution obtained with the original LF-LTS method \cite{DiazGrote09} without stabilization ($\nu = 0$).
Both numerical solutions, be it with or without stabilization, 
were computed using a time-step $\Delta t = \operatorname*{e}^{-\nu} \, h_{\operatorname*{c}}$ and
coincide at this scale with the exact solution.
\begin{figure}
\centering
\includegraphics[width=0.49\textwidth]{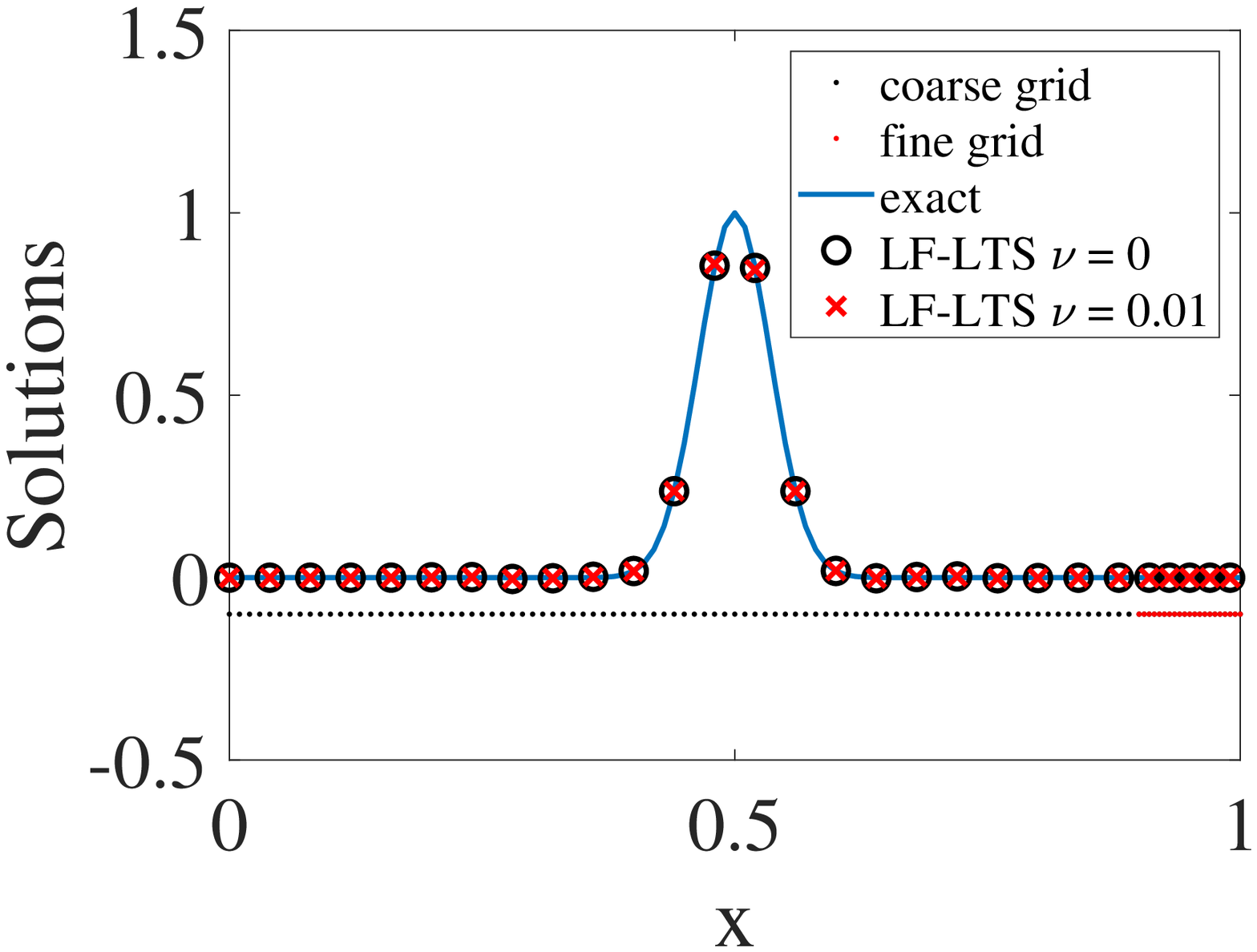}
\includegraphics[width=0.49\textwidth]{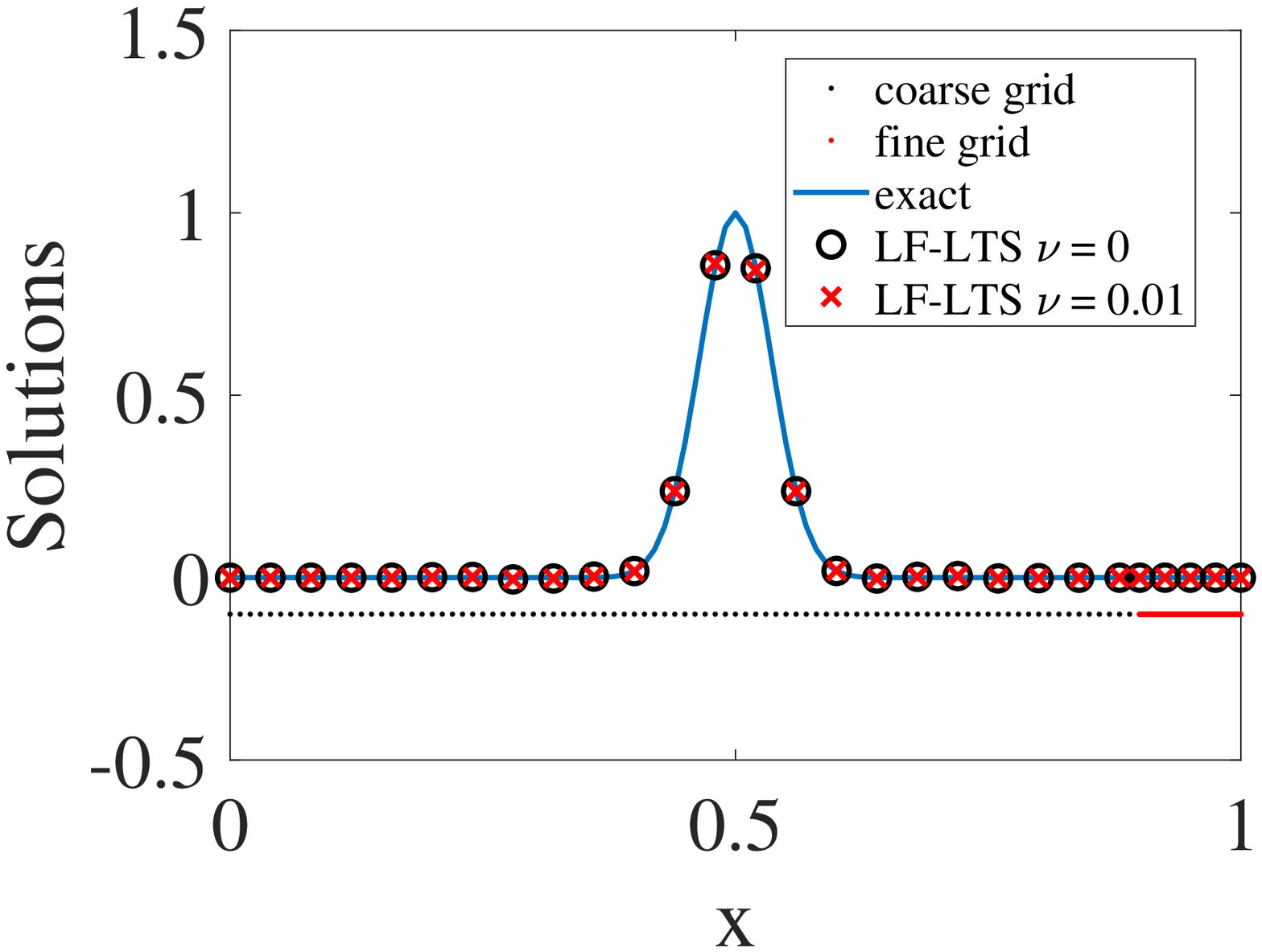}
\caption{LF-LTS solutions of (\ref{model problem}) at $t=2$ for $p=2$ (left) or $p=1000$ (right) either with ($\nu=0.01$, crosses) or without ($\nu=0$, circles) stabilization together with the exact solution (solid line). The dots below the $x$ axis mark the grid points in $\Omega_{\operatorname*{c}} = [0,0.9]$ and $\Omega_{\operatorname*{f}} = [0.9,1]$. }
\label{FigNumSolT2}
\end{figure}

Next, we test convergence of the stabilized scheme for a varying number of local time-steps $p$ on a sequence of locally refined meshes. 
As shown in Fig. \ref{FigError}, the absolute $L^2$-error at the final time $t=2$ converges as $\mathcal{O}(h_{\operatorname*{c}}^2)$ independently of $p$. 
Hence, the stabilized LF-LTS method indeed achieves the optimal convergence rate proved in Theorem \ref{TheoMain}.
\begin{figure}
\centering
\includegraphics[width=0.75\textwidth]{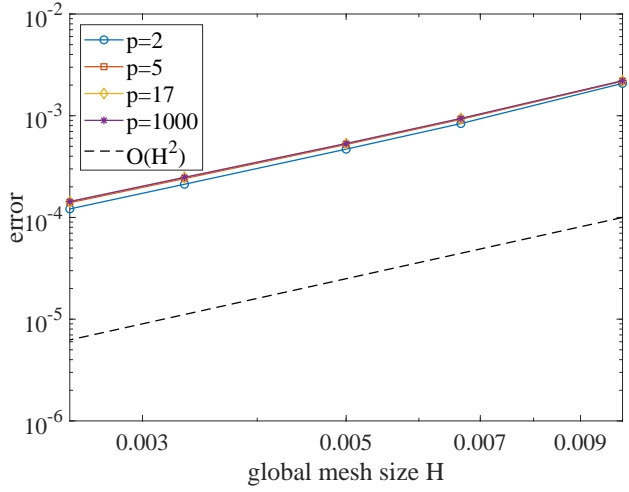}
\caption{Absolute $L^2$-error vs. $H = h_{\operatorname*{c}}$ for $\mathbb{P}_1$ finite elements for $\nu=0.01$ and $p=2,5,17,1000$.}
\label{FigError}
\end{figure}

In Fig. \ref{FigNrj}, we display the time evolution of the energy and relative deviation from its constant value for $h_{\operatorname*{c}} = 1/320$ and $p=2$ until $T=100$, that is, for over 30'000 (global) time-steps. 
During the entire simulation, the discrete energy $E^{n+1/2}$ defined in (\ref{DefDiscNrj}) indeed remains constant within machine precision.
\begin{figure}
\centering
\includegraphics[width=0.49\textwidth]{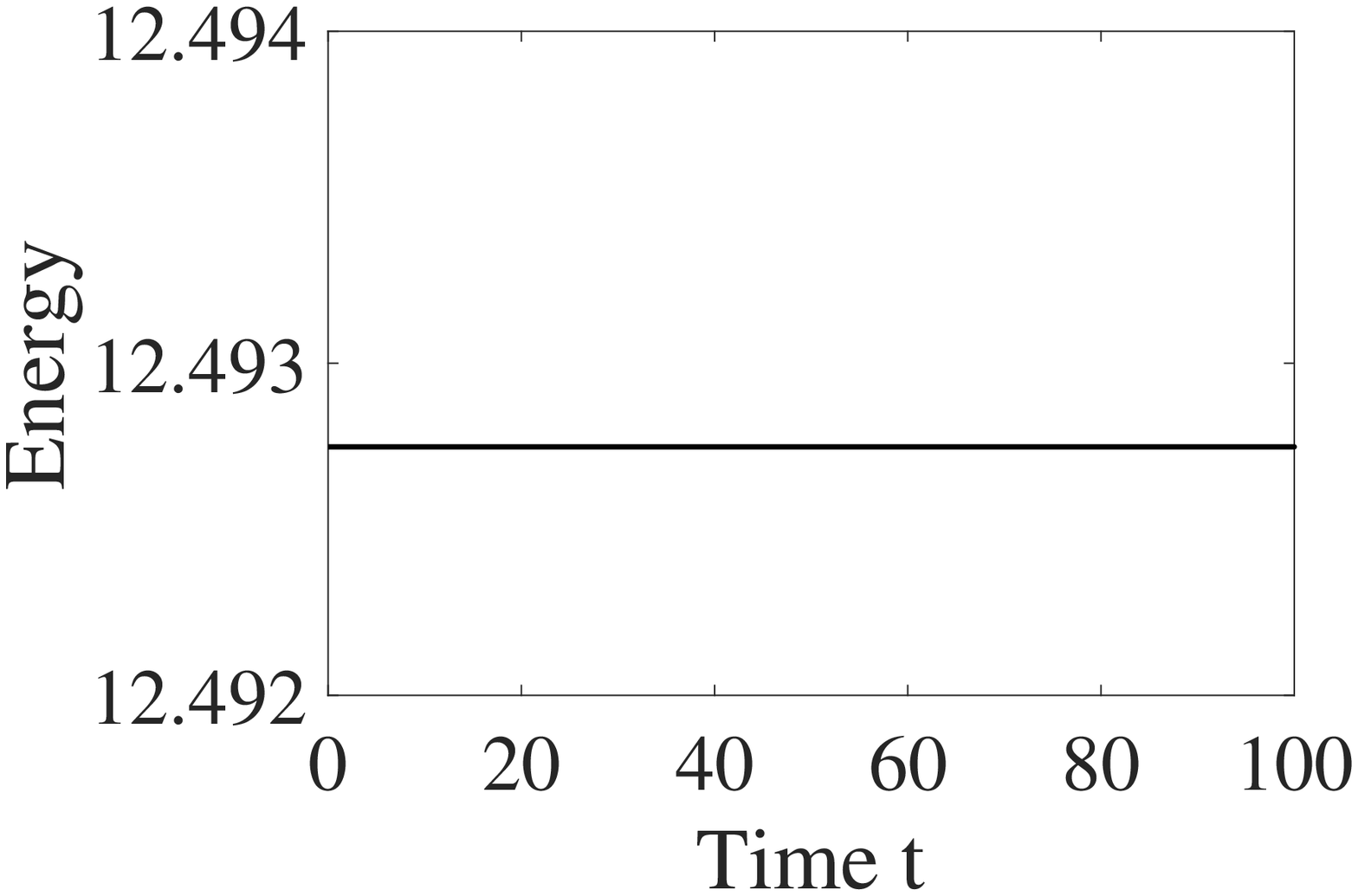}
\includegraphics[width=0.49\textwidth]{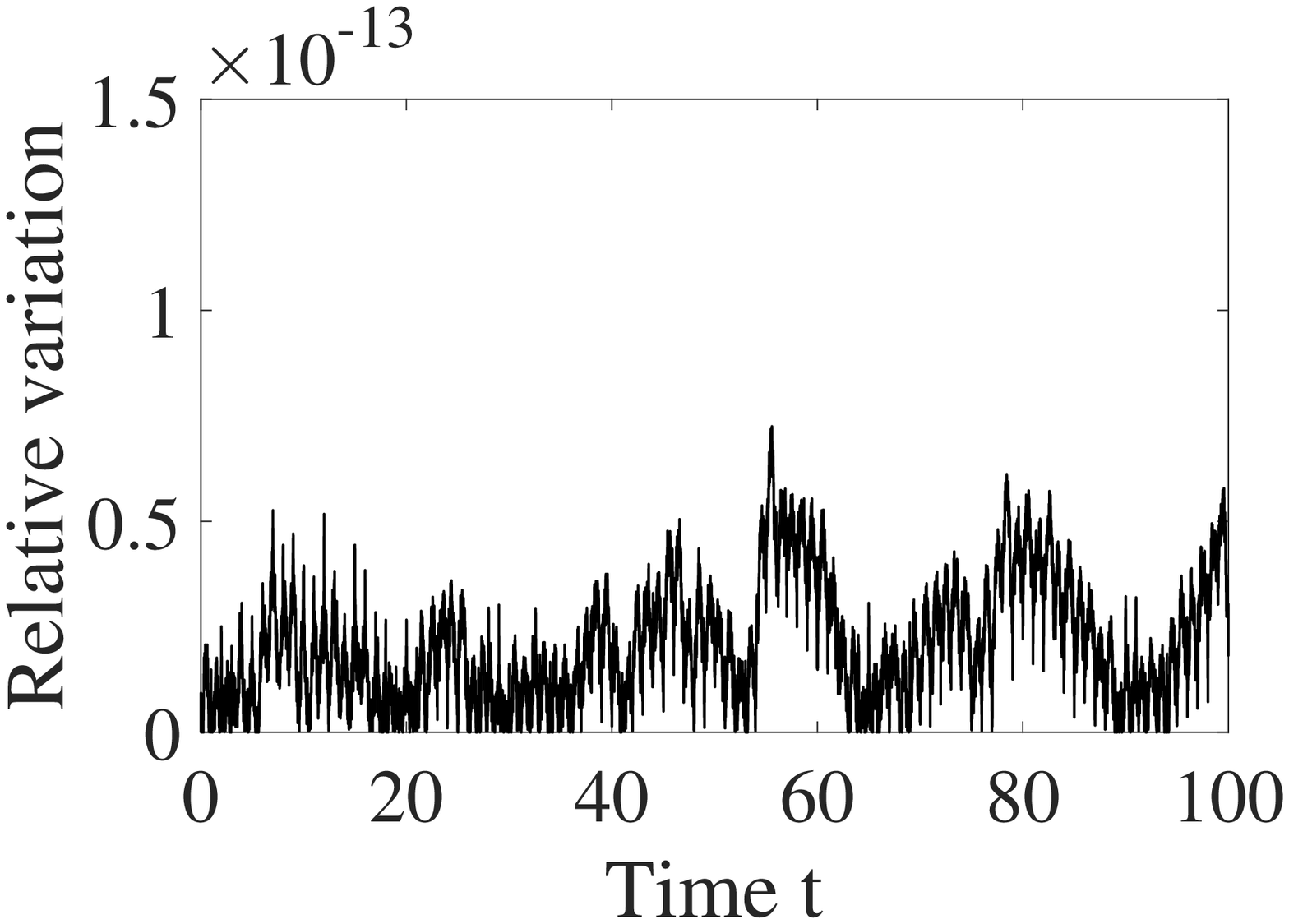}
\caption{Time evolution of the discrete energy $E^{n+1/2}$ as in (\ref{DefDiscNrj}) (left) and of its relative variation $\left| E^{n+1/2}/E^{1/2} - 1 \right|$ (right).}
\label{FigNrj}
\end{figure}

Finally, we determine experimentally the maximal time-step $\Delta t = \Delta t_{\operatorname*{LFLTS}(p,\nu)}$, for which the stabilized LF-LTS scheme remains stable, or more precisely, for which all eigenvalues of $\Delta t^2 {A}^{S,p,\nu}$ lie in $(0,4)$.
Ideally, that time-step should essentially be equal to the optimal time-step,
$\Delta t_{\operatorname*{opt}} = 2 / \sqrt{\lambda^{\max}}$, 
for the standard leapfrog scheme on an equidistant mesh; 
here $\lambda^{\max}$ denotes the largest (discrete) eigenvalue $\lambda$ of the bilinear form $a$, i.e.
\[
a(\eta,v) = \lambda\, (\eta,v)_{\mathcal{T}}, \qquad \forall v \in S,
\]
with corresponding eigenfunction $\eta\in S$.
As shown in Table \ref{TabCFLnue}, the allowed maximal time-step only slightly decreases for small values of $\nu$, such as $\nu=0.01$, where we lose only 0.3 \% over  $\Delta t_{\operatorname*{opt}}$.
Even for a stabilization parameter as large as $\nu=0.5$, the CFL number decreases by less than 13 \%.
In all cases, the critical eigenvalue of $\Delta t^2 {A}^{S,p,\nu}$ remains strictly below $4$, as shown in the last row of Table \ref{TabCFLnue}. 
The negligible reduction of the allowed maximal time-step indicates that the stability condition \eqref{CFLtotal}, needed for the analysis and increasingly stringent for small $\nu$, in fact seems irrelevant in practice.

\begin{table}
\centering
\caption{The maximal time-step $\Delta t_{\operatorname*{LFLTS}(p,\nu)}$ in percent of $\Delta t_{\operatorname*{opt}}$ and the minimal distance from the stability limit at 1 of $\Delta t^2 \lambda_{p,\nu}/4$ for $\Delta t \leq \Delta t_{\operatorname*{LFLTS}(p,\nu)}$.}
\begin{tabular}{ c||c|c|c|c }
$\nu$ & 0.001 & 0.01 & 0.05 & 0.5 \\
\hline
\hline
\rule{0pt}{18pt}
$\dfrac{\Delta t_{\operatorname*{LFLTS}(p,\nu)}}{\Delta t_{\operatorname*{opt}}}$ & 99.9 \% & 99.7 \% & 98.4 \% & 87.6 \%
\rule[-13pt]{0pt}{18pt} \\
\hline
\rule{0pt}{18pt}
$\min \left\{ 1 - \dfrac{\Delta t^2}{4} \lambda_{p,\nu} \right\}$ & 0.0005 & 0.0049 & 0.0239 & 0.1758
\rule[-13pt]{0pt}{18pt}
\end{tabular}
\label{TabCFLnue}
\end{table}

\subsection{Stabilized vs. non-stabilized LTS scheme}
\label{SecNumCompa}

The LF-LTS method is stable for any particular $\Delta t$, 
as long as all the discrete eigenvalues of $(\Delta t^2/4) A^{S,p,\nu}$ strictly lie in $(0,4)$.
Hence to illustrate the stabilizing effect of $\nu$, we now study the behavior of those eigenvalues either with ($\nu>0$) or without ($\nu=0$) stabilization
for a coarse mesh size of $h_{\operatorname*{c}} = 1/40$ and a locally refined part with $h_{\operatorname*{f}} = h_{\operatorname*{c}}/p,$ $p=3$. 

For $\nu=0$, the eigenvalues shown in the left column of Fig. \ref{FigEVDtApnue} cut across the stability threshold line at $1$ only for $\Delta t > \Delta t_{\operatorname*{opt}} \sim h_{\operatorname*{c}}$. 
However, at certain discrete values of $\Delta t$ as small as $\Delta t_{\operatorname*{crit}} = 0.5011 \cdot \Delta t_{\operatorname*{opt}}$, the first potentially unstable, critical time-step, some of the eigenvalue curves in fact are tangent to the stability thresholds at zero or one.
Those unstable, critical time-steps $\Delta t$ are isolated and hard to detect: 
choose any time-step slightly smaller, or larger, and the LTS-LF method will indeed remain perfectly stable.
Still, for these particular choices of $\Delta t$, the LF-LTS method (with $\nu = 0$) may become (linearly) unstable. 

In contrast for $\nu = 0.001$, we observe in the right column of Figure \ref{FigEVDtApnue} that all eigenvalues remain strictly inside the open interval $(0,1)$.
By setting $\nu=0.001$, we have thus removed all potentially unstable critical time-steps present in the LF-LTS method without stabilization.
\begin{figure}
\centering
\includegraphics[width=0.49\textwidth]{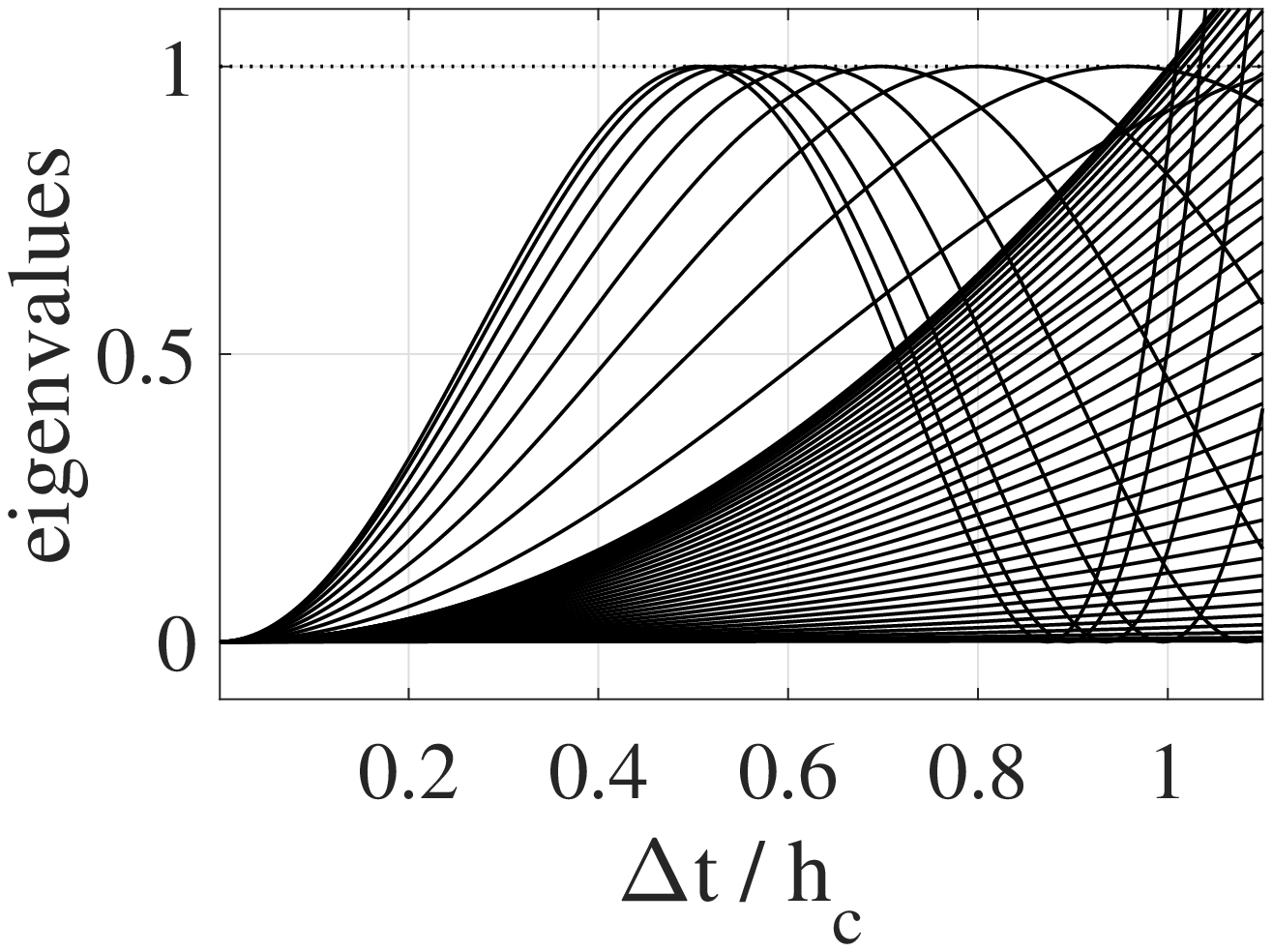}
\includegraphics[width=0.49\textwidth]{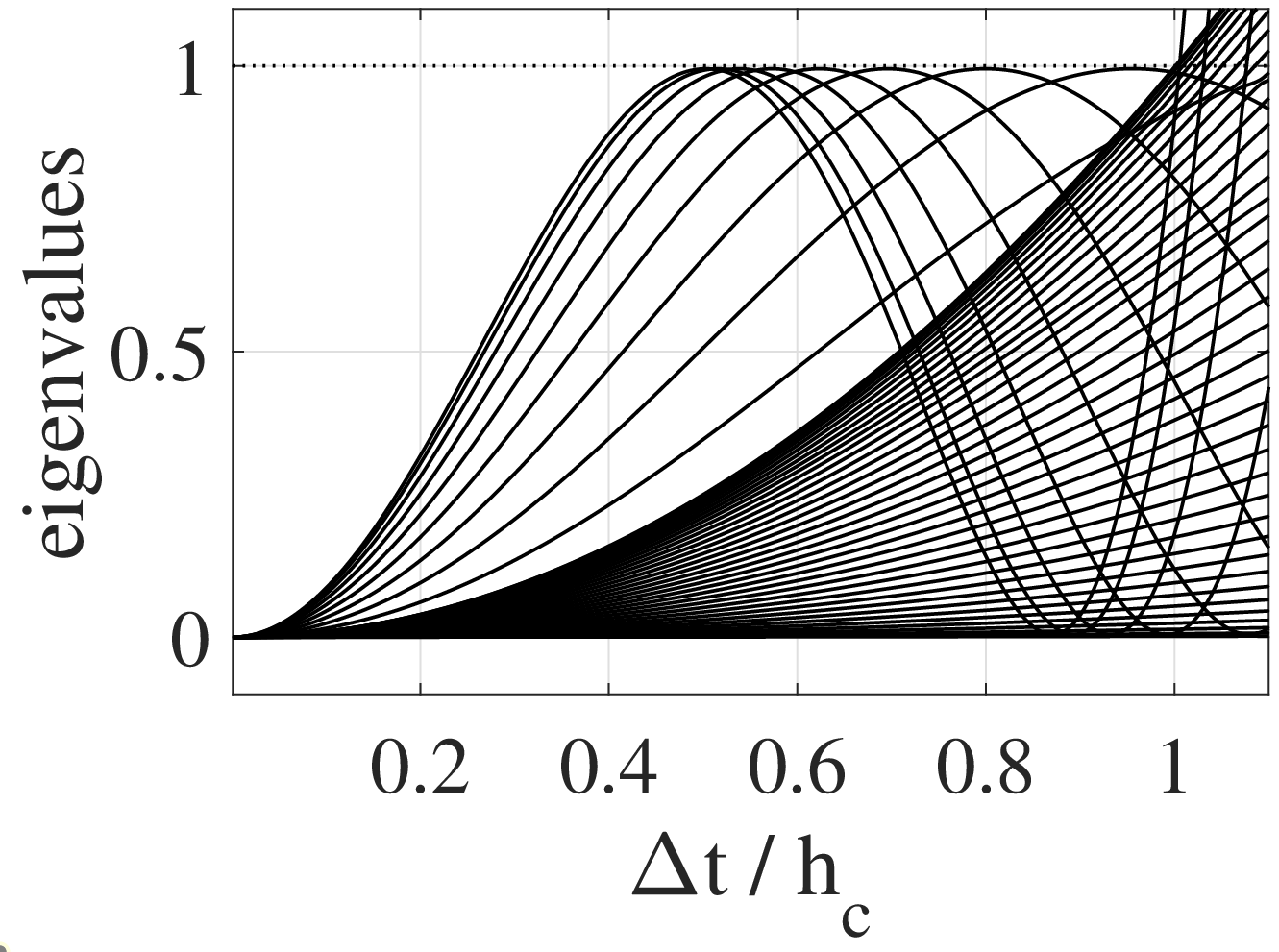}
\includegraphics[width=0.49\textwidth]{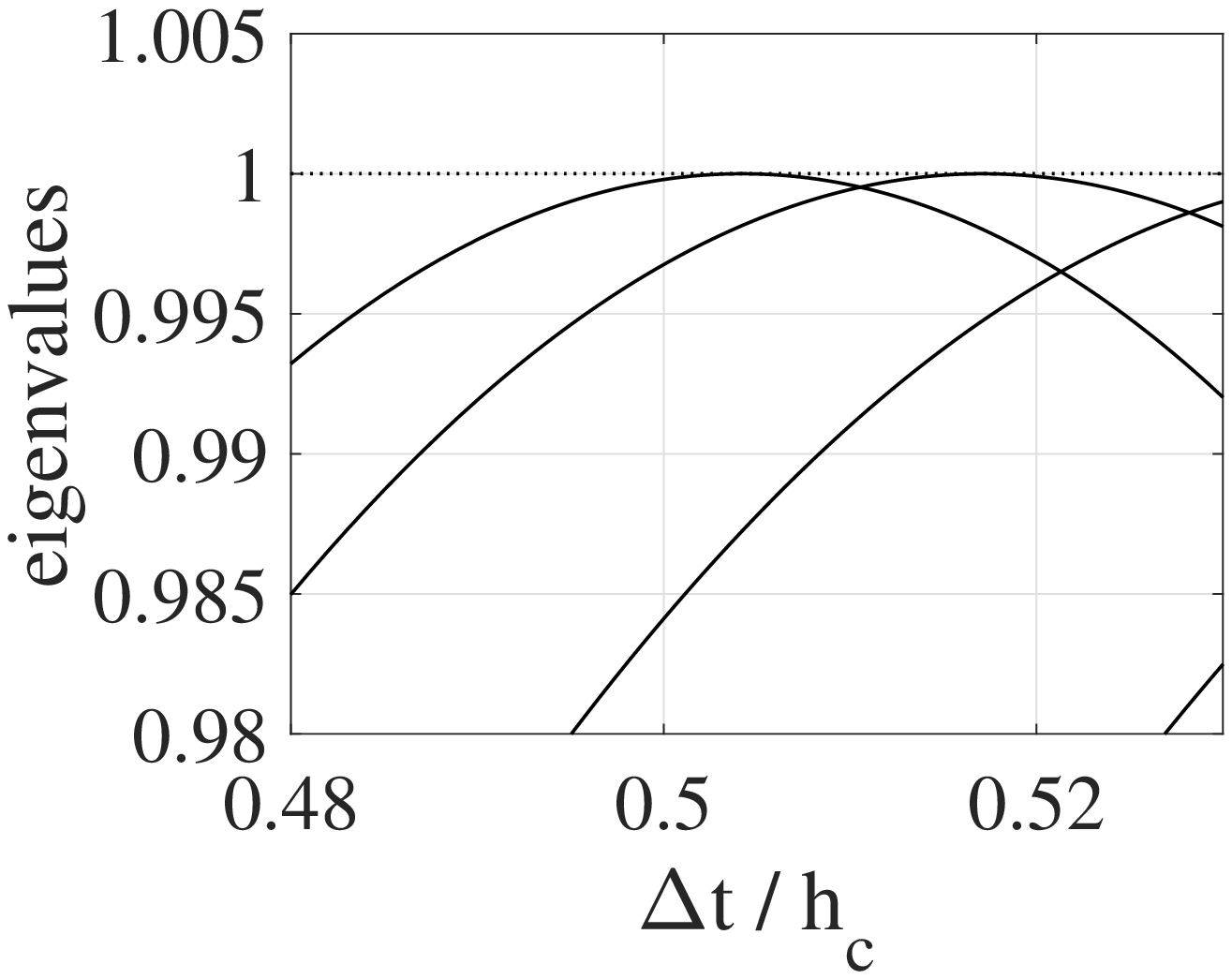}
\includegraphics[width=0.49\textwidth]{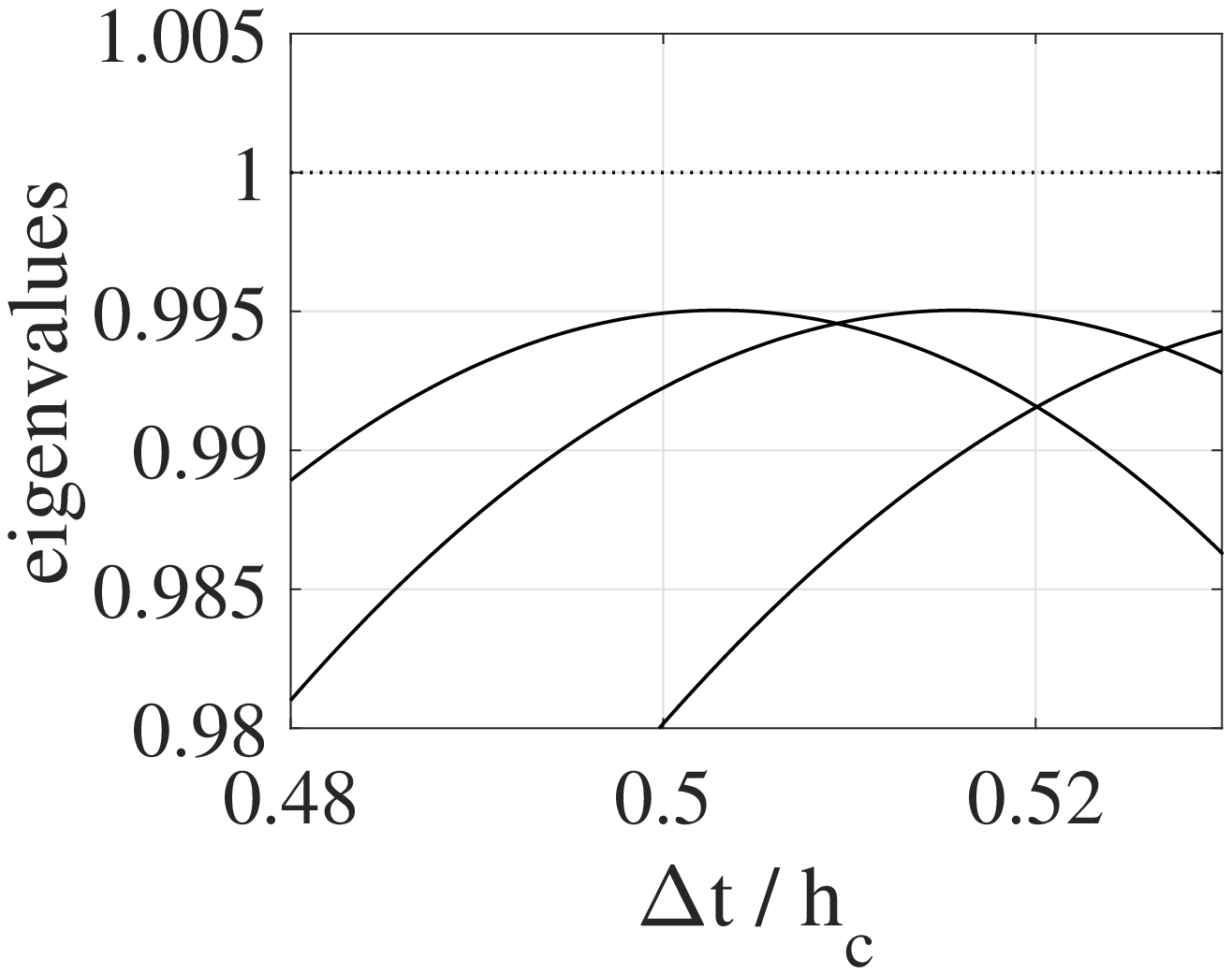}
\includegraphics[width=0.49\textwidth]{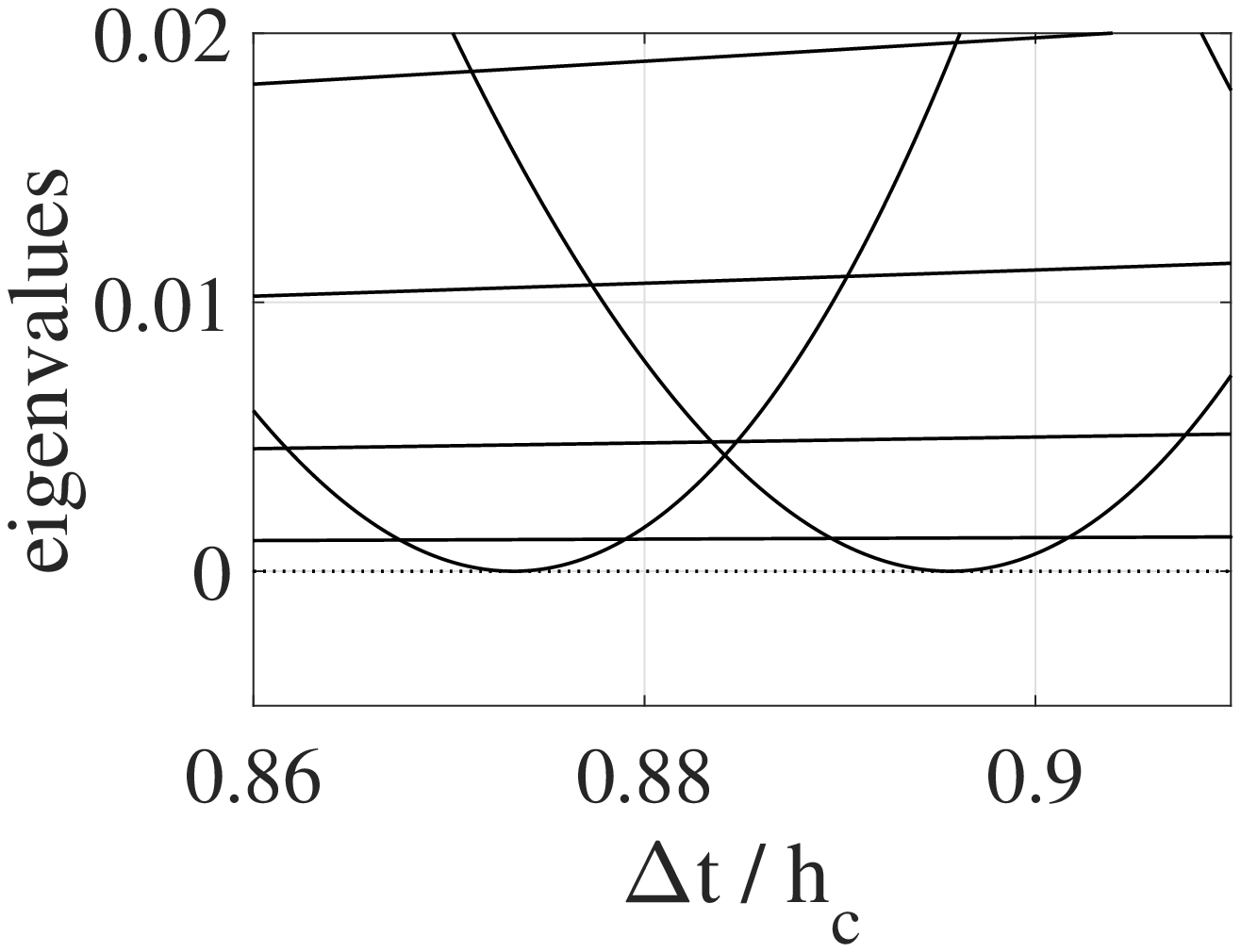}
\includegraphics[width=0.49\textwidth]{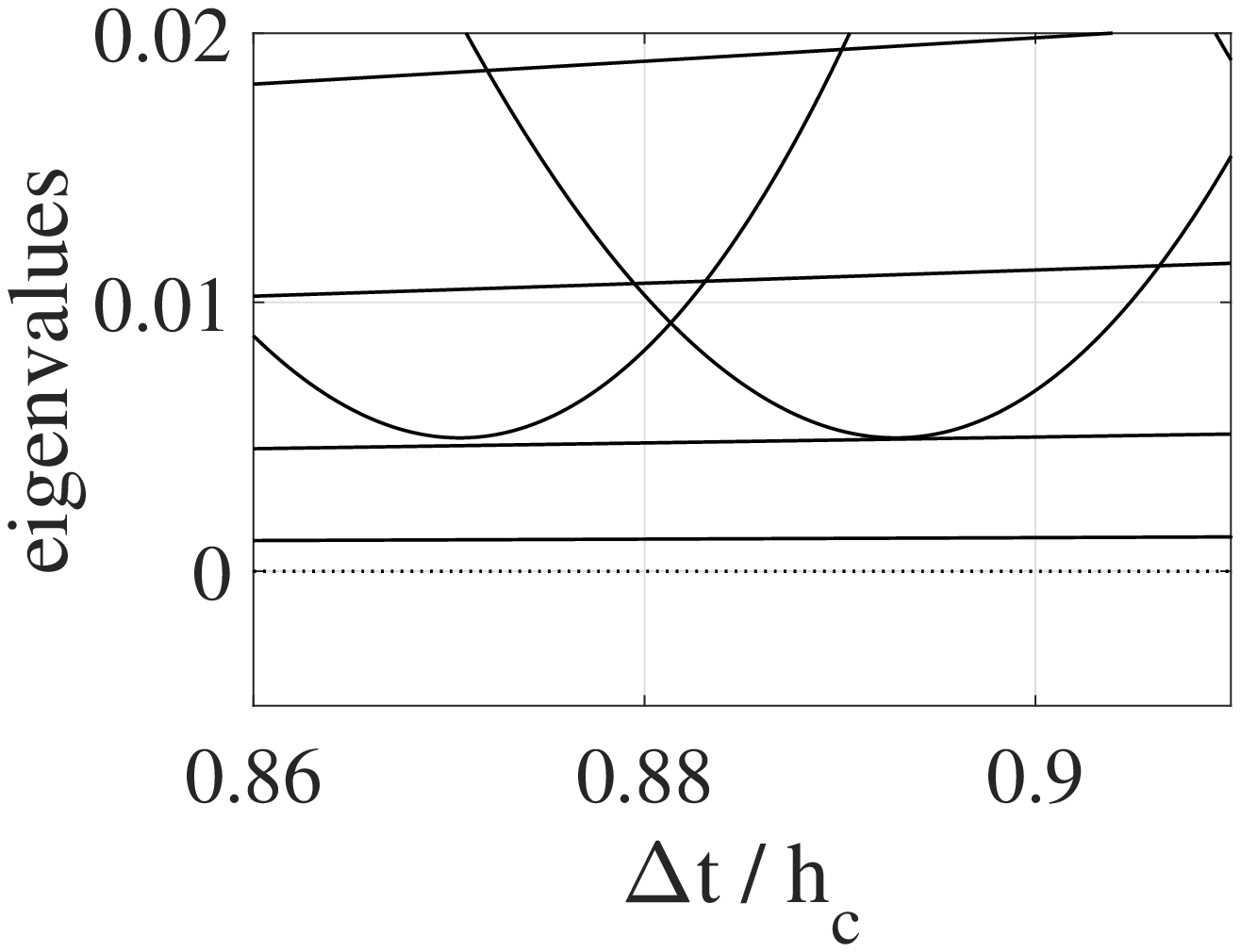}
\caption{Top: Eigenvalues of $(\Delta t^2/4) A_{S,p,\nu}$ vs. CFL ratio $\Delta t/h_{\operatorname*{c}}$ for $p=3$, $\nu = 0$ (left), $\nu = 0.001$ (right). Middle and bottom: Zoom on particular critical points from the top row.}
\label{FigEVDtApnue}
\end{figure}

In general, the value of the first potentially critical time-step $\Delta t_{\operatorname*{crit}}$ in the LF-LTS method without stabilization depends on $p$. 
For $p=2$, for instance, we have $\Delta t_{\operatorname*{crit}} = 0.7104 \cdot h_{\operatorname*{c}}$.
To purposely trigger the potentially unstable behavior of the LF-LTS method when $\nu=0$, we now set the initial condition $u_0(x)$ to the corresponding eigenfunction $\eta_{p,\nu}^{\operatorname*{crit}}$ of $\Delta t^2 A^{S,p,\nu}$ for $p=2$ and $\Delta t = \Delta t_{\operatorname*{crit}}$.
Shown on the left of Fig. \ref{Figvcrit}, we observe that $\eta_{p,\nu}^{\operatorname*{crit}}$ is highly oscillatory.

On the right of Fig. \ref{Figvcrit}, we observe that the numerical solution of the LF-LTS method without stabilization indeed grows linearly with time until $T = 1000$; it then starts to grow even faster probably due to round-off. 
In contrast, the stabilized LF-LTS method with $\nu = 0.01$ remains bounded for all time even when initialized with the same unstable eigenmode.
Hence, the stabilization has entirely removed any potentially unstable behavior at no extra cost, while the CFL restriction on $\Delta t$ essentially remains the same---see Table \ref{TabCFLnue}.
\begin{figure}
\centering
\includegraphics[width=0.49\textwidth]{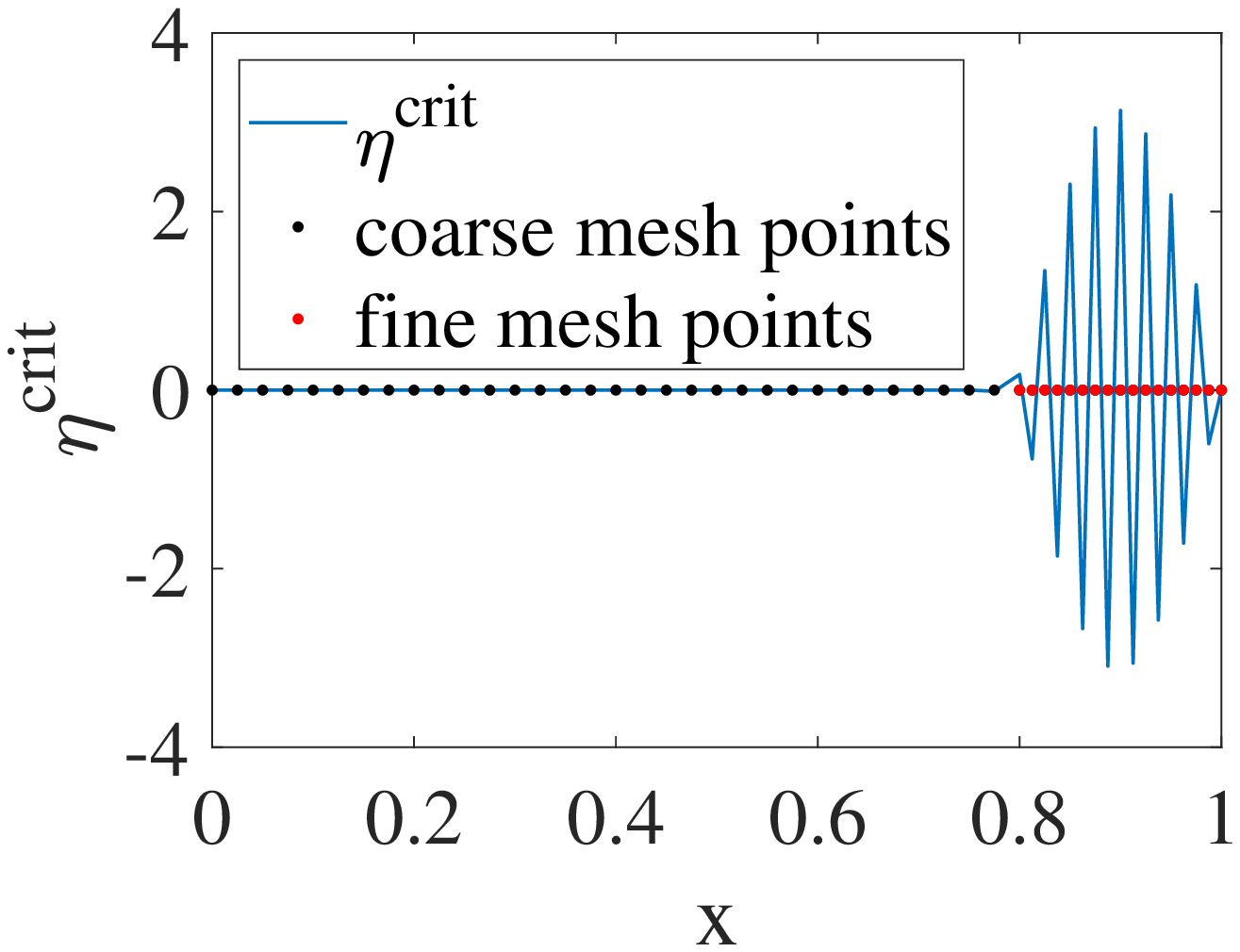}
\includegraphics[width=0.49\textwidth]{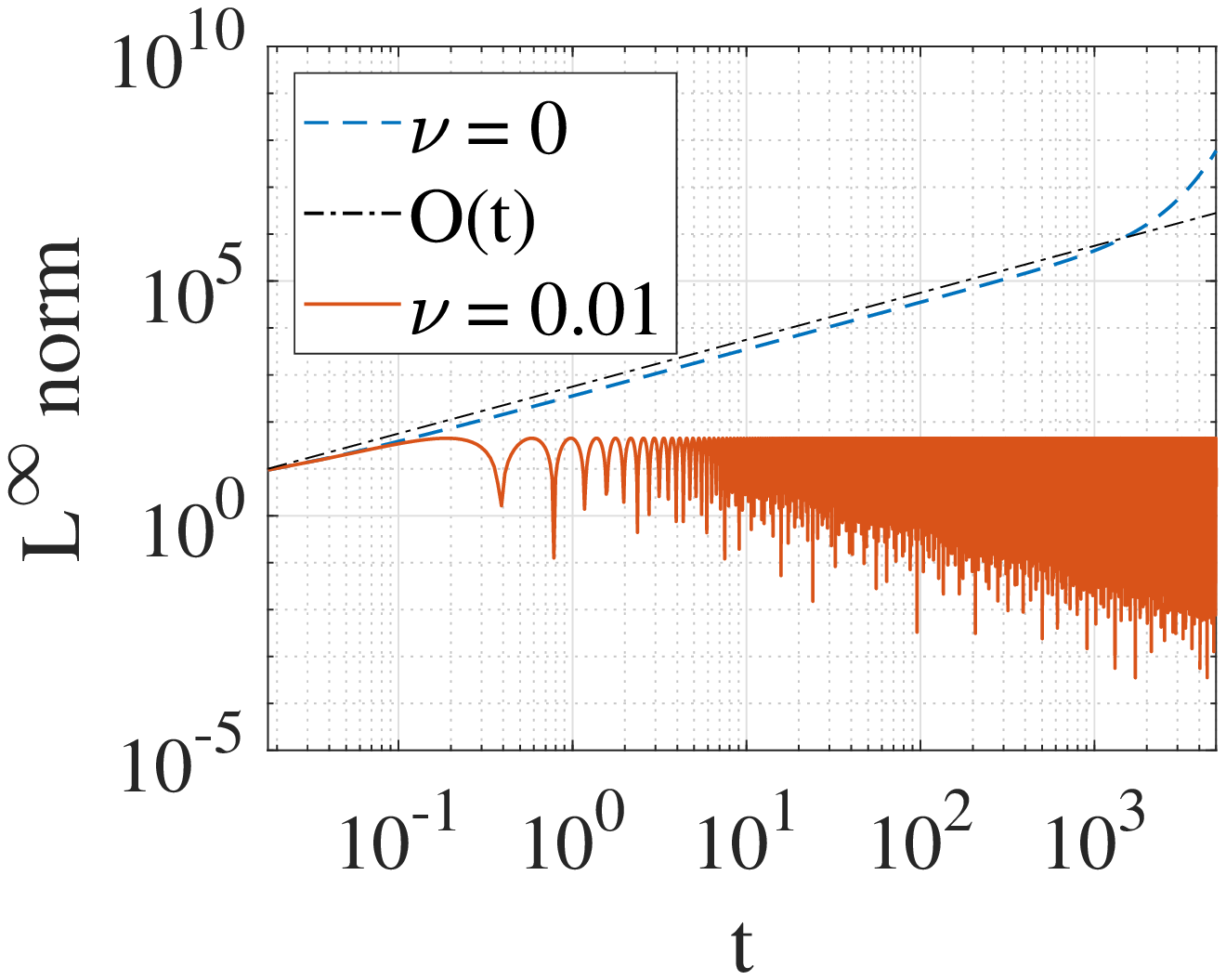}
\caption{Left: Eigenfunction $\eta_{p,\nu}^{\operatorname*{crit}}$ associated with a critical (unstable) time-step $\Delta t = \Delta t_{\operatorname*{crit}}$ on a locally refined mesh. Right: Evolution of $\left\| u(t,\cdot) \right\|_\infty$ vs.~time $t$ for $\nu = 0$ (dashed blue) and $\nu=0.01$ (solid orange) with dash-dotted reference line $\mathcal{O}\left(t\right)$.}
\label{Figvcrit}
\end{figure}

As illustrated in Fig. \ref{Figvcrit}, the LF-LTS method without stabilization may become unstable at particular discrete values of the time-steps.
We now ask whether purposely choosing a critically unstable time-step can prevent the LF-LTS method with $\nu=0$ from achieving optimal convergence.
Hence, we apply the LF-LTS method with $\nu=0$ and initial condition (\ref{IniCondGauss}) for a sequence of meshes of decreasing mesh size $h_{\operatorname*{c}}$ while choosing on each mesh a corresponding critical time-step $\Delta t_{\operatorname*{crit}}$.
Even for those somewhat extreme parameter settings, we observe in Fig. \ref{FigConvDtcrit} that optimal convergence of the LF-LTS\,($p=2,\nu=0$) method is in fact preserved on a finite time interval. 
\begin{figure}
\centering
\includegraphics[width=0.75\textwidth]{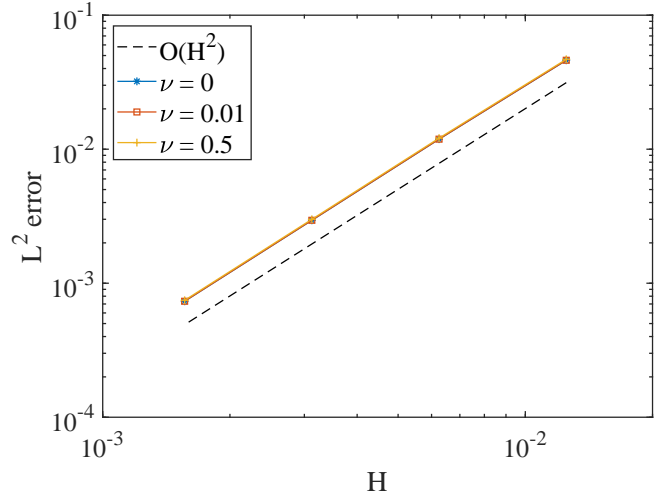}
\caption{Absolute $L^2$ error at time $t=2$ vs. $H = h_{\operatorname*{c}}$ for $\Delta t = \Delta t_{\operatorname*{crit}}$ on each mesh, $p=2$, $\nu = 0,0.01,0.5$, with dashed reference line $\mathcal{O}\left(H^2\right)$.}
\label{FigConvDtcrit}
\end{figure}

\subsection{Accuracy and convergence on graded meshes}
Finally, we consider a two-dimensional L-shaped domain with a reentrant corner, \[
\Omega = (0,1)^2 \setminus \left ( [0.5,1) \times (0.5,1] \right).
\]
Due to characteristic singularities of the solution at reentrant corners, uniform meshes
generally do not yield optimal convergence rates
 \cite{MuellerSchwab16}. 
A common remedy to restore the accuracy and achieve optimal convergence rates is to use (a-priori) graded meshes, see, e.g., \cite[Chapt. 3.3.7]{Schwab}, toward the reentrant corner.
Hence, we  first partition $\Omega$ into six triangles of equal size with a common vertex at the center $(0.5,0.5)$.
Then on every edge $e$ connected to the center, we allocate $N+1$ points at distance 
\begin{equation*}
|e| \left(\frac{k}{N}\right)^\beta, \quad k = 0,1,\ldots,N,
\end{equation*}
from it, where $\beta \geq 1$ is a fixed grading parameter; the larger $\beta$, the more strongly the triangles will cluster near the reentrant corner, whereas for $\beta = 1$ the mesh is uniform throughout $\Omega$.
All other vertices within the same $k$-th layer are distributed uniformly, as shown in Fig. \ref{FigGradedMesh} for a graded mesh with $\beta = 1.5$ and $N=10$.

Now, we consider (\ref{model problem}) in $\Omega$ with homogeneous Dirichlet boundary conditions and the initial conditions
\begin{equation}
\left\{
\begin{aligned}
u_0(x) &= 0, \\
v_0(x) &= -w(x),
\end{aligned}
\right.
\label{IniCondElliptic}
\end{equation}
where $w$ is the (singular) solution of the elliptic problem $\Delta w = 100$ in $\Omega$ with homogeneous Dirichlet boundary conditions, numerically computed on a highly refined mesh.

Again, we solve this problem numerically using $\mathbb{P}_1$-FE and the stabilized LF-LTS method with $\nu = 0.01$ for time integration on a sequence of graded meshes. On every mesh, we let
 the locally refined region $\Omega_{\operatorname*{f}}$ consist of those elements which lie inside the nearest $\left\lfloor\sqrt{N}\right\rfloor$ layers from the reentrant corner.
Hence, the number of local time-steps, $p$, depends on $N$ and thus varies from one mesh to another.
The numerical solution for $\beta=1.5$ and $N = 320$ is shown at time $t=0.3$ on the right of Fig. \ref{FigGradedMesh}.
\begin{figure}
\centering
\includegraphics[width=0.42\textwidth]{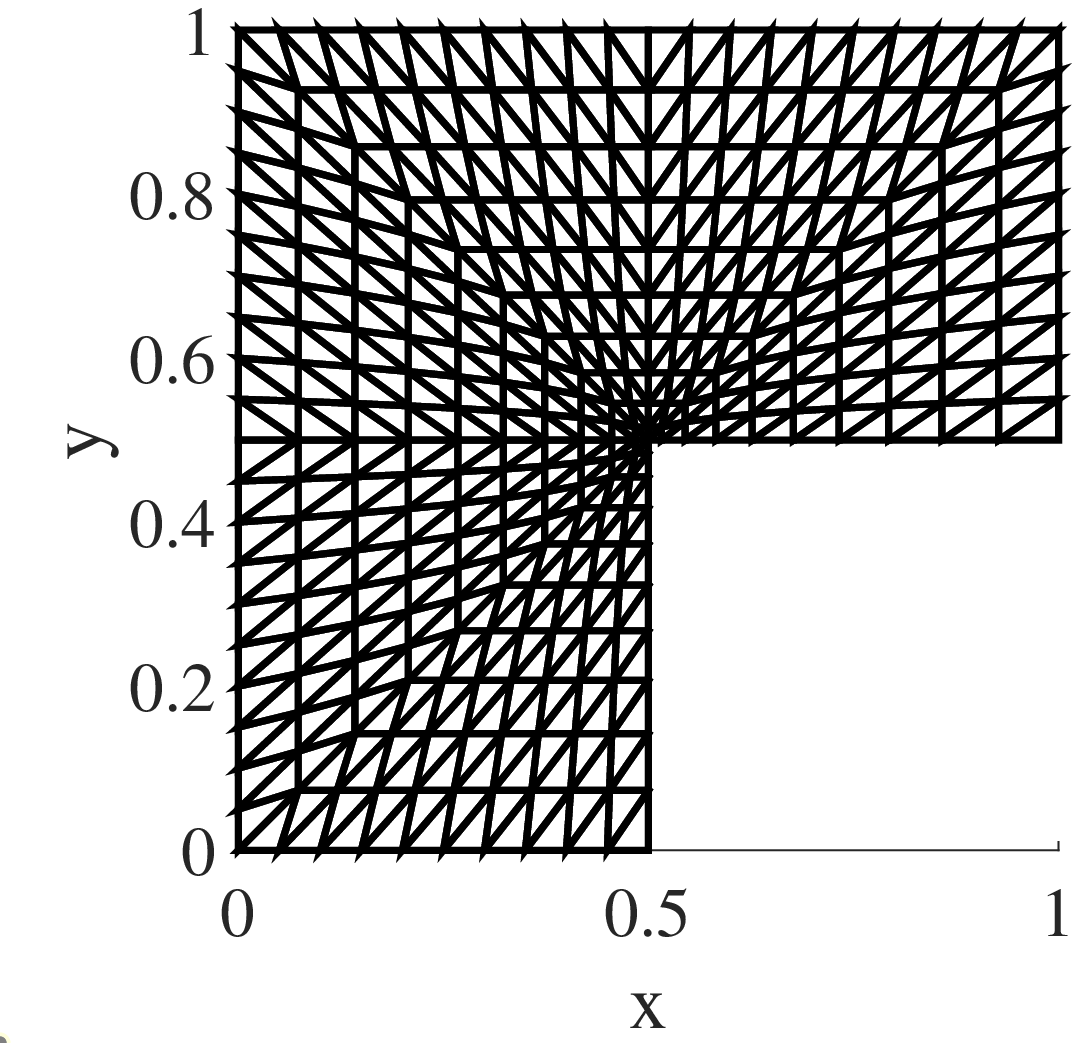}
\includegraphics[width=0.56\textwidth]{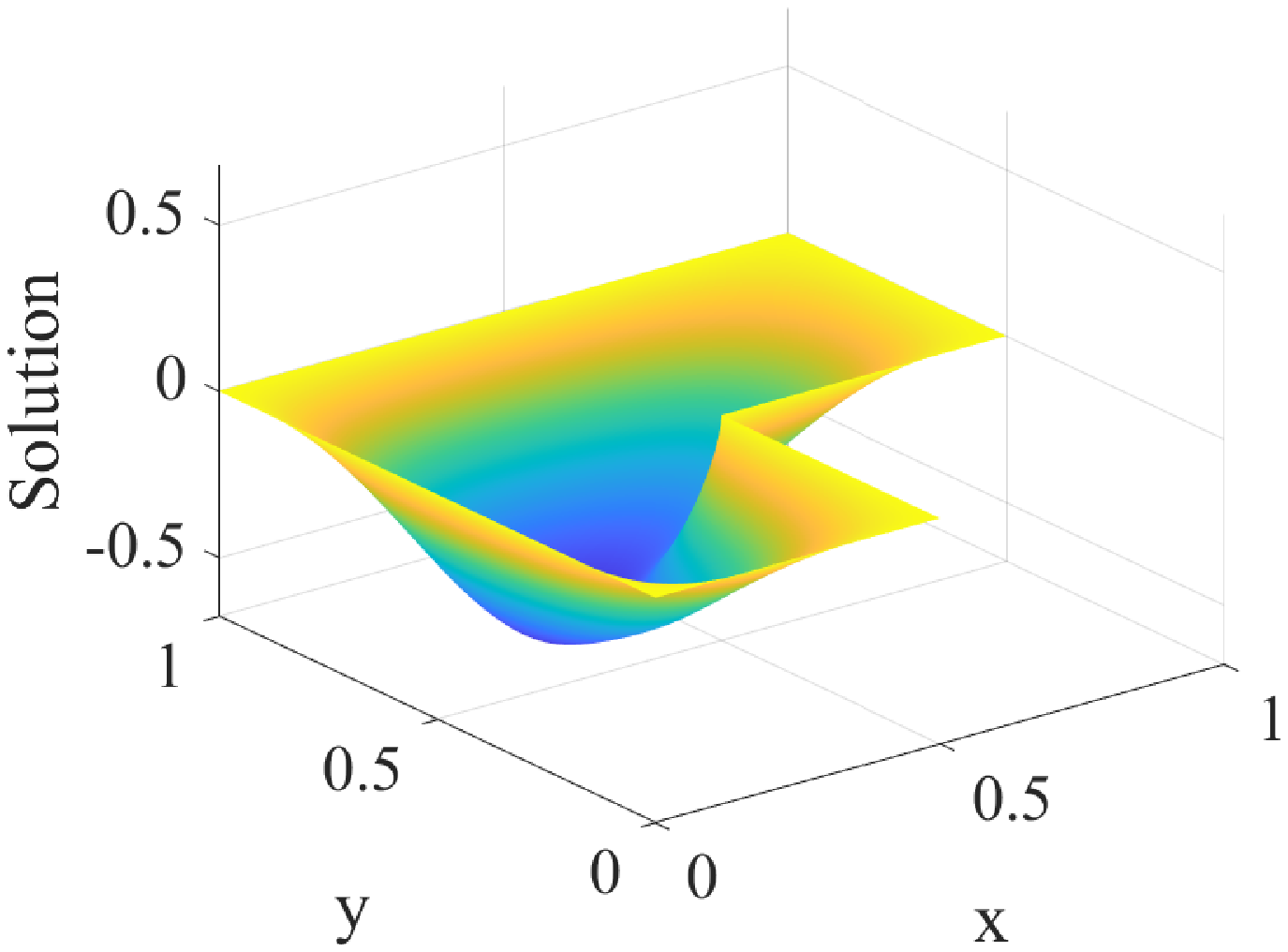}
\caption{Left: Graded mesh with grading parameter $\beta=1.5$. Right: LF-LTS solution of (\ref{model problem}) for initial conditions (\ref{IniCondElliptic}) at $t=0.3$}
\label{FigGradedMesh}
\end{figure}

Next, we study the convergence of the stabilized LF-LTS scheme on a sequence of graded meshes with  $\beta = 1.6$ and $h = \max_\tau h_\tau$.
As shown in Fig. \ref{FigConvLshape}, the error converges optimally as $\mathcal{O}(h)$ with respect to the $H^1$-norm and as $\mathcal{O}\left(h^2\right)$ with respect to the $L^2$-norm.
The $L^2$- and $H^1$-errors are both computed at the final time $t=0.3$ with respect to a reference solution on a finer mesh.

For the sake of comparison, we also display the errors obtained with a standard LF method on a sequence of uniform meshes. As expected, the LF method with a uniform FE discretization fails to achieve
the optimal convergence rates due to the corner singularity. Moreover, for any mesh size $h$, the 
LF-LTS method on the corresponding graded mesh is more accurate than the standard LF method on a uniform mesh.

\begin{figure}
\centering
\includegraphics[width=0.49\textwidth]{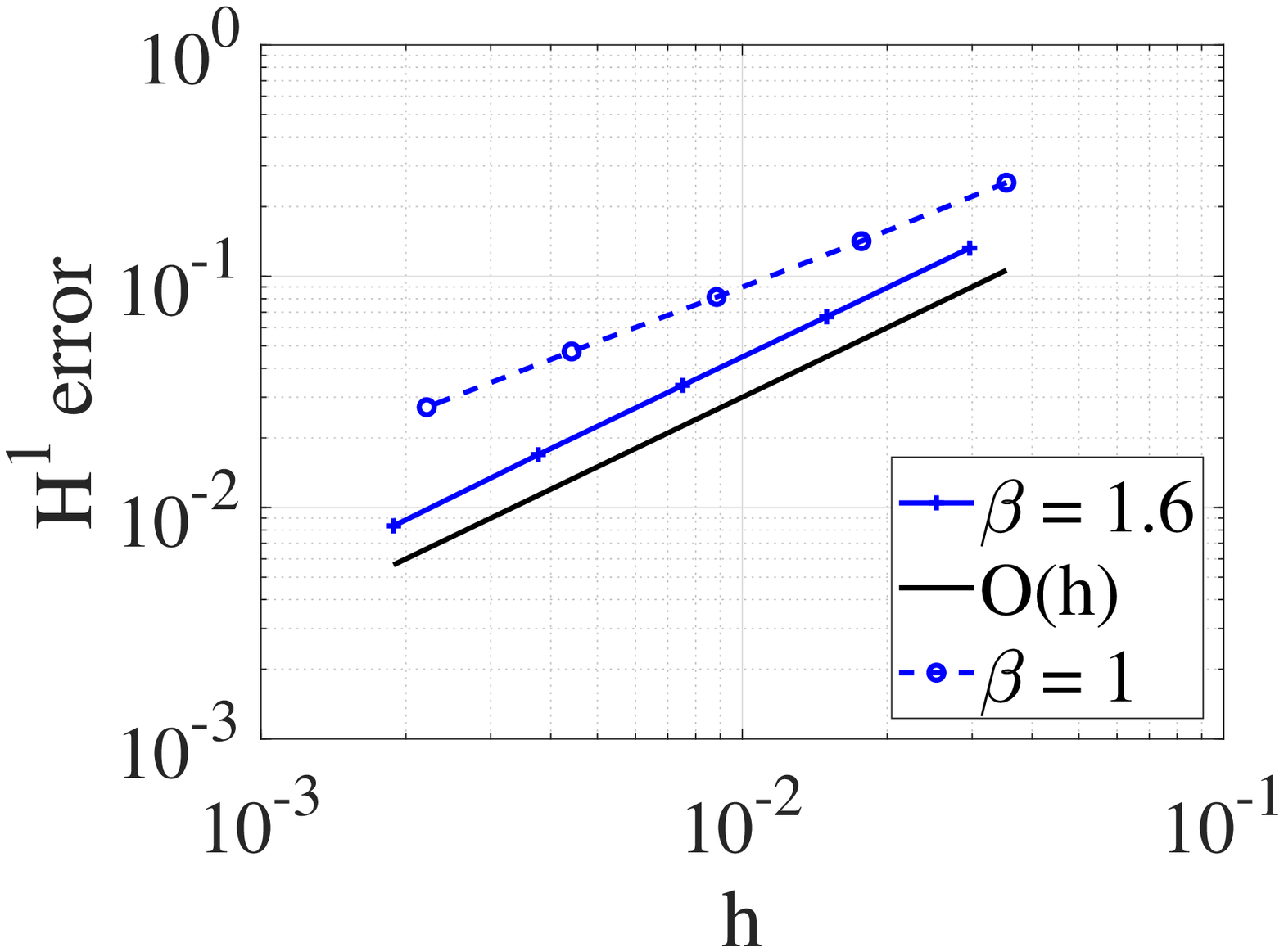}
\includegraphics[width=0.49\textwidth]{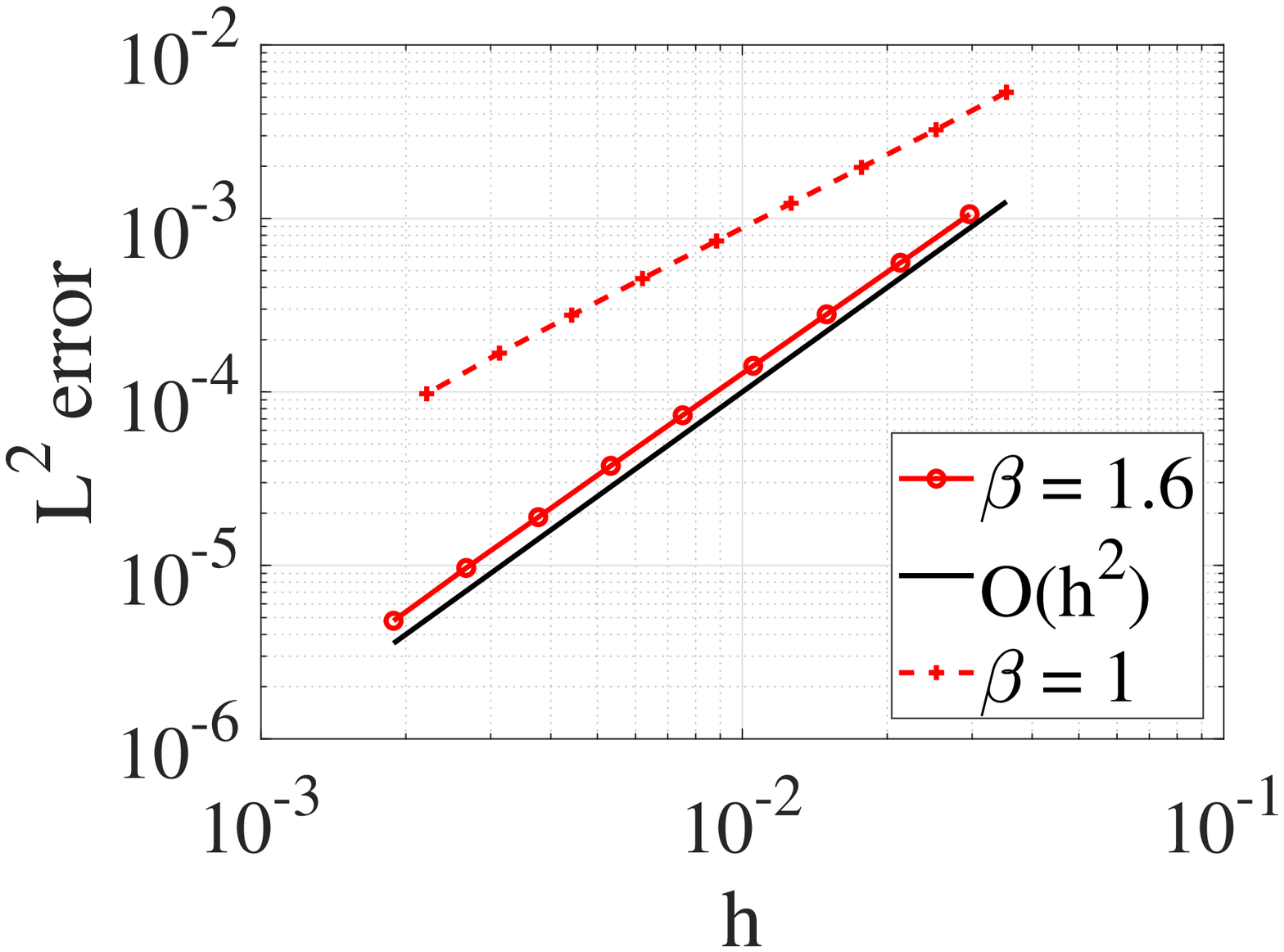}
\caption{L-shaped domain: Absolute errors vs. $h = \max_\tau h_\tau$ at $t=0.3$ either for graded
meshes with  $\beta = 1.6$ and LF-LTS time integration (solid line), or for uniform
meshes (with $\beta = 1$) and standard LF time integration (dashed line).
Left: $H^1$-error with reference line $\mathcal{O}(h)$; right: $L^2$-error with reference line $\mathcal{O}\left(h^2\right)$.}
\label{FigConvLshape}
\end{figure}

\section{Concluding remarks}
\label{sec:conclusion}
We have devised a stabilized version of the leapfrog based local time-stepping (LF-LTS) 
method from \cite{DiazGrote09}, which removes all discrete critical time-steps where the original method could potentially become unstable. 
Still, the new stabilized LF-LTS method retains all advantages of the original scheme:
it is fully explicit, second-order accurate and based on a ``leapfrog-like'' three-term recurrence relation. 
Thus, it is inherently parallel when combined with a standard FE discretization with mass-lumping or 
a discontinuous Galerkin FE method \cite{GSS06} while it also conserves a discrete energy (\ref{DefDiscNrj}).
Moreover, for sufficiently small stabilization $\nu>0$, the CFL stability condition of the
LF-LTS method remains essentially unchanged even in the presence of local refinement. 

The full stabilized LF-LTS algorithm is listed in Section 2.4; it 
is based on a nonsymmetric operator splitting -- see Remark \ref{RemarkLFCLTS}.
In Theorem \ref{TheoMain}, we have proved optimal $L^2$ convergence rates under a CFL condition (\ref{CFLtotal}) independent of the ``coarse''-to-``fine'' mesh size ratio and the number of local time-steps $p$.
Our numerical experiments corroborate those convergence rates even for $p$ as large as one thousand and validate the conservation of energy down to machine precision. 
Moreover, when combined with a FE discretization on a graded mesh, the LF-LTS method achieves optimal convergence rates even for singular solutions in a two-dimensional L-shaped domain.

Although we have restricted ourselves here to homogeneous wave equations for the sake of simplicity, 
the new stabilized LF-LTS algorithm extends to general nonzero forcing \cite{GroteMitkova10}.

\appendix

\section{Estimates for Chebyshev Polynomials}\label{AppendixA}

In this appendix, we will prove some estimate for the Chebyshev polynomials of
the first kind. We recall that
\begin{equation}
T_{p}^{\left(  m\right)  }\left(  1\right)  =%
{\displaystyle\prod\limits_{\ell=0}^{m-1}}
\frac{\left(  p^{2}-\ell^{2}\right)  }{\left(  2\ell+1\right)  }%
\qquad\text{and\quad}\left\Vert T_{p}^{(m)}\right\Vert _{L^{\infty}\left(
\left[  -1,1\right]  \right)  }=T_{p}^{\left(  m\right)  }\left(  1\right)  ,
\label{Tpproperties}%
\end{equation}
where the first relation follows from \cite[(1.97)]{Rivlin} and the second one
from \cite[Theorem 2.24]{Rivlin}, see also \cite[Corollary 7.3.1]%
{SauterSchwab2010}.

\begin{lemma}
\label{Tpmext}Let $\delta_{p,\nu}=1+\nu/p^{2}$ as in (\ref{defpolys1}). Then%
\begin{equation}
\left\Vert T_{p}^{\left(  m\right)  }\right\Vert _{L^{\infty}\left(
-\delta_{p,\nu},\delta_{p,\nu}\right)  }\leq\frac{p^{2m}\operatorname*{e}%
^{\nu}}{%
{\displaystyle\prod\limits_{\ell=0}^{m-1}}
\left(  2\ell+1\right)  }. \label{Tpmmax}%
\end{equation}
For $x=1+\varepsilon\in\left[  1,\delta_{p,\nu}\right]  $ and $p\geq m+1$ it
holds
\[
T_{p}^{\left(  m\right)  }\left(  x\right)  \geq\left(  1+\frac{\left(
p^{2}-m^{2}\right)  }{\left(  2m+1\right)  }\varepsilon\right)
{\displaystyle\prod\limits_{\ell=0}^{m-1}}
\frac{p^{2}-\ell^{2}}{2\ell+1}%
\]
and%
\begin{equation}
T_{p}\left(  \delta_{p,\nu}\right)  \geq1+\nu. \label{Tplower}%
\end{equation}

\end{lemma}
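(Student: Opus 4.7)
The plan is to derive all three bounds from a single exact, finite Taylor expansion of $T_p^{(m)}$ about $x=1$. Since $T_p^{(m)}$ is a polynomial of degree $p-m$, one has
\begin{equation*}
T_p^{(m)}(1+\varepsilon) = \sum_{k=0}^{p-m}\frac{T_p^{(m+k)}(1)}{k!}\,\varepsilon^k,
\end{equation*}
and by the first identity in (\ref{Tpproperties}), every coefficient $T_p^{(m+k)}(1) = \prod_{\ell=0}^{m+k-1}(p^2-\ell^2)/(2\ell+1)$ is non-negative. This non-negativity is the workhorse of the whole argument: it allows truncation from above to obtain lower bounds, and replacement of each factor $(p^2-\ell^2)$ by its majorant $p^2$ to obtain upper bounds.

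For the $L^\infty$ bound (\ref{Tpmmax}), I would first argue that the maximum over $[-\delta_{p,\nu},\delta_{p,\nu}]$ is attained at $x=\delta_{p,\nu}$. The identity $\|T_p^{(m)}\|_{L^\infty([-1,1])}=T_p^{(m)}(1)$ from (\ref{Tpproperties}) handles the interval $[-1,1]$, while on $[1,\delta_{p,\nu}]$ the Taylor expansion above has non-negative coefficients, so $T_p^{(m)}$ is non-negative and monotone non-decreasing there; the parity $T_p^{(m)}(-x)=(-1)^{p-m}T_p^{(m)}(x)$ then handles $[-\delta_{p,\nu},-1]$. Substituting $\varepsilon=\nu/p^2$ into the expansion and bounding $(p^2-\ell^2)\leq p^2$ gives
\begin{equation*}
T_p^{(m)}(\delta_{p,\nu}) \;\leq\; \frac{p^{2m}}{\prod_{\ell=0}^{m-1}(2\ell+1)}\sum_{k=0}^{p-m}\frac{\nu^k}{k!\prod_{\ell=m}^{m+k-1}(2\ell+1)} \;\leq\; \frac{p^{2m}\operatorname*{e}^{\nu}}{\prod_{\ell=0}^{m-1}(2\ell+1)},
\end{equation*}
where I discard each inner product (all are $\geq 1$) and extend the sum to the full exponential series.

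For the pointwise lower bound on $[1,\delta_{p,\nu}]$, I simply truncate the Taylor series after the $k=1$ term; all later contributions are non-negative for $\varepsilon\geq 0$, so
\begin{equation*}
T_p^{(m)}(1+\varepsilon) \;\geq\; T_p^{(m)}(1) + T_p^{(m+1)}(1)\,\varepsilon.
\end{equation*}
Factoring $T_p^{(m)}(1)=\prod_{\ell=0}^{m-1}(p^2-\ell^2)/(2\ell+1)$ out of $T_p^{(m+1)}(1)=\bigl((p^2-m^2)/(2m+1)\bigr)\,T_p^{(m)}(1)$ yields the claimed inequality; the hypothesis $p\geq m+1$ is precisely what guarantees $p^2-m^2>0$, so the $\varepsilon$-term is actually present and positive. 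Inequality (\ref{Tplower}) is the special case $m=0$, $\varepsilon=\nu/p^2$: the empty product equals $1$, the linear coefficient collapses to $p^2\cdot(\nu/p^2)=\nu$, and thus $T_p(\delta_{p,\nu})\geq 1+\nu$.

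There is no serious obstacle; the only point requiring care is the non-negativity and closed form of every $T_p^{(m+k)}(1)$ for $m+k\leq p$, both immediate from (\ref{Tpproperties}). Even the monotonicity invoked in the first step can be bypassed, since the Taylor expansion itself, with non-negative coefficients, directly shows $T_p^{(m)}(\delta_{p,\nu})\geq T_p^{(m)}(x)$ for every $x\in[1,\delta_{p,\nu}]$.
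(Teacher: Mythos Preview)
Your proof is correct and follows essentially the same approach as the paper: both arguments expand $T_p^{(m)}$ about $x=1$ in a finite series with non-negative coefficients, then bound each factor $p^2-\ell^2$ above by $p^2$ for (\ref{Tpmmax}) and truncate after the linear term for the lower bounds. The only cosmetic difference is that the paper quotes an explicit hypergeometric-type identity \cite[18.5.7]{NIST:DLMF} for $T_p^{(m)}(x)$, whereas you obtain the same expansion directly from Taylor's theorem together with the derivative values $T_p^{(m+k)}(1)$ already recorded in (\ref{Tpproperties}); after reindexing, the two series coincide term by term.
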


\begin{proof}
For $x\in\left[  -1,1\right]  $, we conclude from (\ref{Tpproperties}) that%
\[
\left\vert T_{p}^{\left(  m\right)  }\left(  x\right)  \right\vert \leq%
{\displaystyle\prod\limits_{\ell=0}^{m-1}}
\frac{\left(  p^{2}-\ell^{2}\right)  }{\left(  2\ell+1\right)  }
\]
holds. For $x\in\left(  -\delta_{p,\nu},\delta_{p,\nu}\right)  \backslash
\left[  -1,1\right]  $, we employ (cf. \cite[18.5.7]{NIST:DLMF})%
\[
T_{p}^{\left(  m\right)  }\left(  x\right)  =p\sum_{k=m}^{p}\left(  -2\right)
^{k}\left(  -1\right)  ^{m}\frac{\left(  p+k-1\right)  !}{\left(  p-k\right)
!\left(  2k\right)  !}\frac{k!}{\left(  k-m\right)  !}\left(  1-x\right)
^{k-m}.
\]
We set $x=1+\varepsilon$ for $0<\varepsilon<\nu/p^{2}$ and obtain%
\begin{align}
T_{p}^{\left(  m\right)  }\left(  x\right)   &  =\sum_{k=m}^{p}2^{k}\frac
{k!}{\left(  k-m\right)  !\left(  2k\right)  !}\varepsilon^{k-m}%
{\displaystyle\prod\limits_{\ell=0}^{k-1}}
\left(  p^{2}-\ell^{2}\right) \label{Tpmx}\\
& \leq\sum_{k=m}^{p}2^{k}\frac{k!}{\left(
k-m\right)  !\left(  2k\right)  !}\left(  \frac{\nu}{p^{2}}\right)  ^{k-m}%
{\displaystyle\prod\limits_{\ell=0}^{k-1}}
\left(  p^{2}-\ell^{2}\right) \nonumber\\
&  \leq p^{2m}\sum_{k=m}^{p}\frac{\nu^{k-m}}{\left(  k-m\right)  !%
{\displaystyle\prod\limits_{\ell=0}^{k-1}}
\left(  2\ell+1\right)  }\leq p^{2m}\sum_{k=0}^{p-m}\frac{\nu^{k}}{k!%
{\displaystyle\prod\limits_{\ell=0}^{k+m-1}}
\left(  2\ell+1\right)  }\nonumber\\
&  \leq\frac{p^{2m}}{%
{\displaystyle\prod\limits_{\ell=0}^{m-1}}
\left(  2\ell+1\right)  }\sum_{k=0}^{\infty}\frac{\nu^{k}}{k!%
{\displaystyle\prod\limits_{\ell=0}^{k-1}}
\left(  2\ell+1\right)  }\leq\frac{p^{2m}\operatorname*{e}^{\nu}}{%
{\displaystyle\prod\limits_{\ell=0}^{m-1}}
\left(  2\ell+1\right)  }.\nonumber
\end{align}
By symmetry of $T_{p}^{\left(  m\right)  }$, we conclude that the same upper
bound holds for $\left\vert T_{p}^{\left(  m\right)  }\left(  x\right)
\right\vert $ in the range $x=-1-\varepsilon$.

Next, we will obtain a lower bound for $T_{p}^{\left(  m\right)  }\left(
x\right)  $ for $x=1+\varepsilon$ and $0<\varepsilon<\nu/p^{2}$. Note that (\ref{Tpmx}) provides a representation with positive terms
so that the truncation of the sum after two terms (for $p\geq m+1$) leads to%
\begin{align*}
T_{p}^{\left(  m\right)  }\left(  x\right)   &  \geq2^{m}\frac{m!}{\left(
2m\right)  !}\left(  1+\frac{\left(  p^{2}-m^{2}\right)  }{\left(
2m+1\right)  }\varepsilon\right)
{\displaystyle\prod\limits_{\ell=0}^{m-1}}
\left(  p^{2}-\ell^{2}\right) \\
&  \geq\left(  1+\frac{\left(  p^{2}-m^{2}\right)  }{\left(  2m+1\right)
}\varepsilon\right)
{\displaystyle\prod\limits_{\ell=0}^{m-1}}
\frac{p^{2}-\ell^{2}}{2\ell+1}.
\end{align*}%
\end{proof}

\begin{lemma}
\label{Lembound}
Let $p\geq1$.\quad

\begin{enumerate}
\item $T_{p}$ is monotonically increasing in $\left[  1,\infty\right]  $.

\item It holds that
\begin{equation*}
\left\| T_p^\prime \right\|_{L^\infty(-\delta_{p,\nu},\delta_{p,\nu})} = T_p^\prime \left(\delta_{p,\nu}\right).
\end{equation*}

\item Let $\omega_{p,\nu}$ be as in (\ref{defpolys1}). Then,%
\begin{equation}
2p^{2}\operatorname*{e}\nolimits^{-\nu}\leq\omega_{p,\nu}\leq2p^{2}.
\label{estomegafb}%
\end{equation}

\end{enumerate}
\end{lemma}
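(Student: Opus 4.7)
The plan is to address each of the three statements separately, with the hyperbolic substitution $\delta_{p,\nu} = \cosh\phi$ serving as the unifying tool for parts 1 and 3.

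\textbf{Part 1.} The cleanest route uses the classical identity $T_{p}(\cosh\phi) = \cosh(p\phi)$, valid for $\phi \geq 0$. Since $\phi \mapsto \cosh\phi$ is a monotone bijection from $[0,\infty)$ onto $[1,\infty)$ and $\phi \mapsto \cosh(p\phi)$ is strictly increasing on $[0,\infty)$, the composition $x \mapsto T_{p}(x)$ is strictly increasing on $[1,\infty)$.

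\textbf{Part 2.} I would split $[-\delta_{p,\nu},\delta_{p,\nu}]$ into three pieces. On $[1,\delta_{p,\nu}]$, the series expansion (\ref{Tpmx}) from the proof of Lemma \ref{Tpmext} (with $m=1$) exhibits $T_{p}'(1+\varepsilon)$ as a power series in $\varepsilon \geq 0$ with non-negative coefficients; hence $T_{p}'$ is non-decreasing on $[1,\delta_{p,\nu}]$ and attains its maximum there at $\delta_{p,\nu}$. On $[-1,1]$, the classical identity (\ref{Tpproperties}) gives $\|T_{p}'\|_{L^{\infty}([-1,1])} = T_{p}'(1) = p^{2}$, which is $\leq T_{p}'(\delta_{p,\nu})$ by the previous step. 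On $[-\delta_{p,\nu},-1]$, the parity $T_{p}(-x) = (-1)^{p} T_{p}(x)$ yields $|T_{p}'(-x)| = |T_{p}'(x)|$, so the bound transfers by symmetry and the supremum is indeed attained at $x = \delta_{p,\nu}$.

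\textbf{Part 3.} I would again invoke the substitution $\delta_{p,\nu} = \cosh\phi$ to rewrite
\[
\omega_{p,\nu} = \frac{2 T_{p}'(\delta_{p,\nu})}{T_{p}(\delta_{p,\nu})} = \frac{2p\,\tanh(p\phi)}{\sinh\phi}.
\]
The upper bound $\omega_{p,\nu} \leq 2p^{2}$ then follows immediately from $\tanh(y) \leq y$ for $y \geq 0$ combined with $\sinh\phi \geq \phi$. For the lower bound, I would combine two ingredients already available from Lemma \ref{Tpmext}: applying (\ref{Tpmmax}) with $m=0$ gives $T_{p}(\delta_{p,\nu}) \leq e^{\nu}$, and Part 2 above (or equivalently the $\varepsilon=0$ specialization of the series (\ref{Tpmx}) with $m=1$) gives $T_{p}'(\delta_{p,\nu}) \geq T_{p}'(1) = p^{2}$, whence $\omega_{p,\nu} \geq 2p^{2}/e^{\nu} = 2p^{2} e^{-\nu}$.

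The main obstacle I anticipate is the upper bound in part 3, since the naive route of dividing the separate bounds $T_{p}'(\delta_{p,\nu}) \leq p^{2} e^{\nu}$ and $T_{p}(\delta_{p,\nu}) \geq 1+\nu$ taken from Lemma \ref{Tpmext} only yields $\omega_{p,\nu} \leq 2p^{2} e^{\nu}/(1+\nu)$, which strictly exceeds $2p^{2}$ for $\nu > 0$. The hyperbolic substitution circumvents this by handling $T_{p}'$ and $T_{p}$ jointly rather than independently.
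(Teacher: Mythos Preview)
Your proof is correct, and the lower bound in Part~3 is exactly the paper's argument (combining $T_p'(\delta_{p,\nu})\geq T_p'(1)=p^2$ with $T_p(\delta_{p,\nu})\leq e^{\nu}$). The differences lie elsewhere. For Parts~1 and~2 the paper simply invokes the fact that $(T_p)_p$ forms a Sturm chain on $[-1,1]$ and says nothing more; your explicit arguments via the hyperbolic parametrization, the series expansion~(\ref{Tpmx}), and parity are more self-contained and arguably clearer to a reader unfamiliar with Sturm chains. For the upper bound in Part~3 the paper states only that it is ``a direct consequence of Part~1 and~(\ref{Tpproperties}),'' which is rather opaque since neither ingredient alone bounds $T_p'(\delta_{p,\nu})/T_p(\delta_{p,\nu})$; your hyperbolic rewriting $\omega_{p,\nu}=2p\tanh(p\phi)/\sinh\phi$ followed by $\tanh y\leq y$ and $\sinh\phi\geq\phi$ makes the mechanism transparent and handles numerator and denominator jointly---precisely the issue you flagged in your closing paragraph. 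One minor point: your hyperbolic formula in Part~3 is a $0/0$ form at $\nu=0$ ($\phi=0$); the limit is $2p^2$, so the bound is attained, but you may wish to note this case separately.
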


\begin{proof}
Part 1 and 2 follows from the fact that the sequence of Chebyshev polynomials
$\left(  T_{p}\right)  _{p}$ form a Sturm's chain on $\left[  -1,1\right]  $.

Part 3. The estimate from above is a direct consequence of Part 1 and
(\ref{Tpproperties}).

For the lower estimate we employ%
\begin{equation}
T_{p}^{\prime}\left(  \delta_{p,\nu}\right)  \geq T_{p}^{\prime}\left(
1\right)  =p^{2}.\label{Tpxa}%
\end{equation}
From Lemma \ref{Tpmext} we conclude that%
\begin{equation}
T_{p}\left(  \delta_{p,\nu}\right)  \leq\operatorname*{e}\nolimits^{\nu}.\label{Tpxb}%
\end{equation}
The combination of (\ref{Tpxa}) and (\ref{Tpxb}) leads to the assertion.%
\end{proof}

\begin{lemma}
\label{LemEstPpnue}Let $\nu\in\left[  0,1/2\right]  $. It holds%
\begin{subequations}
\label{Pdeltatest}
\end{subequations}%
\begin{align}
\sup_{0\leq x\leq2\delta_{p,\nu}\omega_{p,\nu}}\left\vert \frac{P_{p,\nu
}\left(  x\right)  }{x}\right\vert  &\leq  1,\tag{%
\ref{Pdeltatest}%
a}\label{Pdeltatesta}\\
\inf_{0\leq x\leq\left( 1 + \delta_{p,\nu} \right)  \omega_{p,\nu}}%
\frac{P_{p,\nu}\left(  x\right)  }{x}  &  \geq\frac{2\nu}{\left(
2+\nu\right)  ^{2}\omega_{p,\nu}}\geq\frac{\nu}{\left(  2+\nu\right)
^{2}p^{2}}. \tag{%
\ref{Pdeltatest}%
b}\label{Pdeltatestb}%
\end{align}
\end{lemma}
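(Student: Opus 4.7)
The plan is to perform the change of variable $y := \delta_{p,\nu} - x/\omega_{p,\nu}$. Using the defining identity $\omega_{p,\nu} = 2T_p'(\delta_{p,\nu})/T_p(\delta_{p,\nu})$ from (\ref{defpolys1}), a short direct computation recasts the key quotient as
\[
\frac{P_{p,\nu}(x)}{x} = \frac{T_p(\delta_{p,\nu}) - T_p(y)}{T_p'(\delta_{p,\nu})\,(\delta_{p,\nu} - y)}.
\]
The range $x \in [0, 2\delta_{p,\nu}\omega_{p,\nu}]$ relevant for (a) corresponds to $y \in [-\delta_{p,\nu}, \delta_{p,\nu}]$, while $x \in [0, (1+\delta_{p,\nu})\omega_{p,\nu}]$ relevant for (b) corresponds to $y \in [-1, \delta_{p,\nu}]$. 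Thus both statements reduce to estimating the secant slope of $T_p$ anchored at $\delta_{p,\nu}$.

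For (a), I would apply the mean value theorem to write $T_p(\delta_{p,\nu}) - T_p(y) = T_p'(\xi)(\delta_{p,\nu} - y)$ with $\xi \in (y, \delta_{p,\nu}) \subset (-\delta_{p,\nu}, \delta_{p,\nu})$, and then invoke Lemma \ref{Lembound}(2), which gives $|T_p'(\xi)| \leq T_p'(\delta_{p,\nu})$. Taking absolute values in the representation above yields (\ref{Pdeltatesta}) immediately.

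For (b), I would split $y \in [-1, \delta_{p,\nu}]$ into the two subranges $[-1, 1]$ and $[1, \delta_{p,\nu}]$. On $[-1, 1]$, the bound $|T_p(y)| \leq 1$ combined with (\ref{Tplower}) yields the numerator estimate $T_p(\delta_{p,\nu}) - T_p(y) \geq T_p(\delta_{p,\nu}) - 1 \geq \nu$, while the denominator satisfies $\delta_{p,\nu} - y \leq 1 + \delta_{p,\nu} \leq 2+\nu$; hence the secant slope is at least $\nu/(2+\nu)$ there. On $[1, \delta_{p,\nu}]$, the convexity of $T_p$ on $[1,\infty)$ (together with Lemma \ref{Lembound}(1)) implies that the secant slope from $y$ to $\delta_{p,\nu}$ is nondecreasing in $y$, hence bounded below by its value at $y=1$, which by the mean value theorem is at least $T_p'(1) = p^2$ and comfortably dominates the target. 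Combining these with the estimate $T_p(\delta_{p,\nu}) \leq \mathrm{e}^{\nu} \leq 2+\nu$ (the first inequality is the $m=0$ case of Lemma \ref{Tpmext}; the second uses $\mathrm{e}^{1/2}<2$ for $\nu \in [0, 1/2]$), one checks $\nu/(2+\nu) \geq \nu T_p(\delta_{p,\nu})/(2+\nu)^2$. Dividing by $T_p'(\delta_{p,\nu})$ and recognizing $T_p(\delta_{p,\nu})/T_p'(\delta_{p,\nu}) = 2/\omega_{p,\nu}$ then delivers the first inequality of (\ref{Pdeltatestb}); the second inequality $\geq \nu/((2+\nu)^2 p^2)$ follows at once from the bound $\omega_{p,\nu} \leq 2p^2$ in Lemma \ref{Lembound}(3).

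The main subtlety lies in the oscillatory subrange $y \in [-1,1]$: the numerator may be as small as $T_p(\delta_{p,\nu})-1$, attained at the local maxima of $T_p$ where $T_p(y)=1$, while the denominator can simultaneously approach $1+\delta_{p,\nu}$; the loss of the factor $(2+\nu)$ in the final lower bound is precisely the price of this worst-case alignment. All remaining steps are routine manipulations built on the Chebyshev estimates already collected in Lemmas \ref{Tpmext} and \ref{Lembound}.
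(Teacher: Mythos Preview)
Your argument is correct. The change of variable to the secant-slope representation is exactly the right move, and both parts then follow cleanly from the Chebyshev estimates already assembled in Lemmas~\ref{Tpmext} and~\ref{Lembound}: part~(a) from the mean value theorem together with the endpoint maximum of $T_p'$, and part~(b) from the worst-case numerator/denominator bounds on $[-1,1]$ combined with convexity of $T_p$ on $[1,\infty)$ (the latter is immediate from the positive-term expansion~(\ref{Tpmx}) for $m=2$).

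The paper itself does not prove this lemma but simply cites \cite[Theorems~5.1 and~5.2]{CarHocStu19}; your write-up therefore supplies a fully self-contained proof using only tools internal to this appendix, which is a genuine improvement in exposition. One cosmetic suggestion: state explicitly that $T_p$ is convex on $[1,\infty)$ and point to~(\ref{Tpmx}) for the justification, so the reader does not have to reconstruct it.
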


For the proof of (\ref{Pdeltatesta}) we refer to  \cite[Theorem 5.1, proof part (ii)]{CarHocStu19},
whereas  (\ref{Pdeltatestb}) is implied by \cite[Theorem 5.2]{CarHocStu19}.

\begin{lemma}
\label{LemPpnue}
It holds%
\begin{align*}
\sup_{0\leq x\leq\left(  2+\nu/p^{2}\right)  \omega_{p,\nu}}\left\vert
P_{p,\nu}\left(  x\right)  \right\vert  &  \leq\frac{2\left(  2+\nu\right)
}{1+\nu} = 4 - \frac{2 \nu}{1 + \nu},\\
\inf_{x\in\left(  c_{\operatorname*{coer}}\Delta t^{2},\left(  2+\nu
/p^{2}\right)  \omega_{p,\nu}\right)  }P_{p,\nu}\left(  x\right)   &  \geq
\min\left\{  c_{\operatorname*{coer}}\frac{\Delta t^{2}}{\left(
\nu+1\right)  },\frac{2\nu}{\nu+1}\right\}  .
\end{align*}
\end{lemma}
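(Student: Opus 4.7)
The strategy is to substitute $y = \delta_{p,\nu} - x/\omega_{p,\nu}$; since $2 + \nu/p^2 = 1 + \delta_{p,\nu}$, this bijectively maps $x \in [0, (2+\nu/p^2)\omega_{p,\nu}]$ onto $y \in [-1, \delta_{p,\nu}]$, with
\[
P_{p,\nu}(x) \;=\; 2\left(1 - \frac{T_p(y)}{T_p(\delta_{p,\nu})}\right).
\]
Both inequalities will thus reduce to properties of $T_p$ on $[-1, \delta_{p,\nu}]$, which I split at $y = 1$: on $[-1,1]$ one has $|T_p(y)| \leq 1$, while on $[1, \delta_{p,\nu}]$ Lemma \ref{Lembound} gives monotonicity so $T_p(y) \in [1, T_p(\delta_{p,\nu})]$. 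The quantitative anchor throughout is the lower bound $T_p(\delta_{p,\nu}) \geq 1+\nu$ from (\ref{Tplower}).

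For the upper bound, combining the two pieces shows that $T_p(y)/T_p(\delta_{p,\nu}) \in [-1/T_p(\delta_{p,\nu}), 1]$ on the whole interval $[-1, \delta_{p,\nu}]$; this simultaneously gives $P_{p,\nu}(x) \geq 0$ and $P_{p,\nu}(x) \leq 2 + 2/T_p(\delta_{p,\nu}) \leq 2(2+\nu)/(1+\nu)$, which is the asserted bound.

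The lower bound splits along the same dichotomy. On the ``large-$x$'' range $y \in [-1,1]$ (i.e.\ $x \geq \nu\omega_{p,\nu}/p^2$), the same estimates give $P_{p,\nu}(x) \geq 2(1 - 1/T_p(\delta_{p,\nu})) \geq 2\nu/(1+\nu)$, producing the second term of the $\min$. On the ``small-$x$'' range $y \in [1, \delta_{p,\nu}]$ the plan is to exploit convexity of $T_p$ on $[1, \infty)$: the chord from $(1, 1)$ to $(\delta_{p,\nu}, T_p(\delta_{p,\nu}))$ lies above the graph, whence
\[
T_p(\delta_{p,\nu}) - T_p(y) \;\geq\; \frac{T_p(\delta_{p,\nu}) - 1}{\delta_{p,\nu} - 1}\,(\delta_{p,\nu} - y).
\]
Substituting $\delta_{p,\nu} - 1 = \nu/p^2$ and $\delta_{p,\nu} - y = x/\omega_{p,\nu}$, then using $(T_p(\delta_{p,\nu}) - 1)/T_p(\delta_{p,\nu}) \geq \nu/(1+\nu)$ and finally $\omega_{p,\nu} \leq 2p^2$ from (\ref{estomegafb}), collapses the estimate to $P_{p,\nu}(x) \geq x/(1+\nu)$. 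Restricting to $x > c_{\operatorname*{coer}}\Delta t^2$ then yields the first term of the $\min$.

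The only non-routine ingredient is the convexity of $T_p$ on $[1,\infty)$, which I would obtain from the identity $T_p(\cosh\theta) = \cosh(p\theta)$: translating the second derivative back to the $y$-variable reduces it to showing $p\cosh(p\theta)\sinh\theta \geq \sinh(p\theta)\cosh\theta$ for $\theta \geq 0$, which follows at once from the fact that the difference vanishes at $\theta = 0$ and its $\theta$-derivative equals $(p^2-1)\sinh\theta\sinh(p\theta) \geq 0$. Once this is in hand, the remainder is elementary bookkeeping with constants already controlled by Lemmas \ref{Tpmext} and \ref{Lembound}, so I anticipate no real obstacle.
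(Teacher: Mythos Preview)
Your argument is correct and, in places, cleaner than the paper's. The substitution $y=\delta_{p,\nu}-x/\omega_{p,\nu}$ and the split at $y=1$ match the paper's structure, and for the ``large-$x$'' part of the lower bound you do exactly what the paper does. The differences are: (i) for the upper bound the paper simply cites \cite[Theorem~5.2]{CarHocStu19}, while you give a self-contained two-line argument from $|T_p|\leq 1$ on $[-1,1]$ and monotonicity on $[1,\delta_{p,\nu}]$; (ii) for the ``small-$x$'' part of the lower bound the paper Taylor-expands $T_p$ about $\delta_{p,\nu}$, invokes the second-derivative bound from Lemma~\ref{Tpmext}, and then needs the auxiliary numerical inequality $2p^{2}e^{-\nu}(1-\nu e^{2\nu}/6)\geq p^{2}/(\nu+1)$ for $\nu\in[0,1/2]$. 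Your convexity/chord argument sidesteps all of this and lands directly on $P_{p,\nu}(x)\geq x/(1+\nu)$, which is sharper in that it does not rely on the restriction $\nu\leq 1/2$ at this step and avoids the exponential estimates altogether. The convexity proof via $T_p(\cosh\theta)=\cosh(p\theta)$ is fine; alternatively one can read $T_p''\geq 0$ on $[1,\infty)$ straight off the positive-coefficient expansion (\ref{Tpmx}) already recorded in Lemma~\ref{Tpmext}.
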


\begin{proof}
Since the upper estimate directly follows from \cite[Theorem 5.2]{CarHocStu19}, we only need to prove the lower bound.

For $\nu/p^{2}\leq x/\omega_{p , \nu}<2+\nu/p^{2}$, we conclude from
(\ref{Tplower}) that%
\begin{equation*}
P_{p,\nu}\left(  x\right)  =2\left(  1-\frac{T_{p}\left(  \delta_{p,\nu}%
-\frac{x}{\omega_{p,\nu}}\right)  }{T_{p}\left(  \delta_{p,\nu}\right)
}\right)  \geq2\left(  1-\frac{1}{1+\nu}\right)  =\frac{2\nu}{\nu+1}.
\end{equation*}

For $c_{\operatorname*{coer}}\Delta t^{2}\leq x/\omega_{p,\nu}\leq$ $\nu
/p^{2}$, 
we employ%
\begin{equation}
\inf_{0\leq x\leq \nu/p^{2}  \omega_{p,\nu}}\frac{P_{p,\nu
}\left(  x\right)  }{x}=\frac{1}{\omega_{p,\nu}}\inf_{0\leq y\leq \nu/p^{2}%
}q_{p,\nu}\left(  y\right)  \label{infproof1}%
\end{equation}
for
\[
q_{p,\nu}\left(  y\right)
=\frac{2}{y}\left(  1-\frac{T_{p}\left(  \delta_{p,\nu}-y\right)  }{T_{p}\left(
\delta_{p,\nu}\right)  }\right)  .
\]
We employ a Taylor argument to obtain%
\[
q_{p,\nu}\left(  y\right)  =\frac{2T_{p}^{\prime}\left(  \delta_{p,\nu
}\right)  -yT_{p}^{\prime\prime}\left(  \delta_{p,\nu}-\xi\right)  }%
{T_{p}\left(  \delta_{p,\nu}\right)  }\geq2\frac{T_{p}^{\prime}\left(
\delta_{p,\nu}\right)  }{T_{p}\left(  \delta_{p,\nu}\right)  }-y\left\vert
\frac{T_{p}^{\prime\prime}\left(  \delta_{p,\nu}-\xi\right)  }{T_{p}\left(
\delta_{p,\nu}\right)  }\right\vert
\]
for $0\leq\xi\leq y$. The combination of (\ref{Tpxa}) and (\ref{Tpxb}) leads
to%
\begin{align*}
q_{p,\nu}\left(  y\right)   
&  \geq 2p^{2} \operatorname*{e}\nolimits^{-\nu} - y\frac{\left\vert T_{p}^{\prime\prime}\left(  \delta_{p,\nu}-\xi\right)\right\vert }{T_{p}\left(  \delta_{p,\nu}\right)  } 
\overset{\text{Lem.\ref{Tpmext}}}{\geq}2p^{2} \operatorname*{e}\nolimits^{-\nu} \left(1-y\frac{p^{2}\operatorname*{e}\nolimits^{2\nu}}{6}\right) \\
&  \geq2p^{2}\operatorname*{e}\nolimits^{-\nu}\left(  1-\frac{\nu \operatorname*{e}\nolimits^{2\nu}}{6}\right)  .
\end{align*}
For $0 \leq \nu \leq 1/2$, it holds that%
\[
2p^{2}\operatorname*{e}\nolimits^{-\nu}\left(  1-\frac{\nu\operatorname*{e}%
\nolimits^{2\nu}}{6}\right)  \geq\frac{p^{2}}{\nu+1},%
\]
which implies%
\begin{equation}
q_{p,\nu}\left(  y\right)  \geq\frac{p^{2}}{\nu+1}\quad\forall y\in\left[
0,\nu/p^{2}\right]  . \label{infproof2}%
\end{equation}
Now, we combine (\ref{infproof1}) and (\ref{infproof2}) to get%
\[
P_{p,\nu}\left(  x\right)  \geq
\frac{x}{\nu+1}\frac{p^{2}}{\omega_{p,\nu}}\geq
\frac{c_{\operatorname*{coer}}\Delta t^{2} p^2}{\nu+1} \overset{p\geq 1}{\geq}%
c_{\operatorname*{coer}}\frac{\Delta t^{2}}{\left(  \nu+1\right)  }.
\]%
\end{proof}

\begin{lemma}
\label{Lemdiffquot}
Let $0\leq\nu\leq 1/2$. Then%
\begin{equation*}
\sup_{0<y<\left(  2+\nu/p^{2}\right)  \omega_{p,\nu}}\left\vert \frac
{1-y^{-1}P_{p,\nu}\left(  y\right)  }{P_{p,\nu}\left(  y\right)  }\right\vert
\leq\frac{\nu+1}{2\nu}.
\end{equation*}
\end{lemma}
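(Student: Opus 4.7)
The plan is to mirror the two-case split used in the proof of Lemma \ref{LemPpnue}, distinguishing the \emph{large-$y$} regime $y \in [\nu\omega_{p,\nu}/p^2,(2+\nu/p^2)\omega_{p,\nu})$ from the \emph{small-$y$} regime $y \in (0,\nu\omega_{p,\nu}/p^2]$. I would first observe that (\ref{Pdeltatesta}) gives $0 \leq 1 - y^{-1}P_{p,\nu}(y) \leq 1$ throughout, so the absolute value can be discarded and only an upper bound on $1/P_{p,\nu}(y)$ is needed. In the large-$y$ regime the proof of Lemma \ref{LemPpnue} already establishes $P_{p,\nu}(y) \geq 2\nu/(\nu+1)$, hence
\[
\frac{1 - y^{-1}P_{p,\nu}(y)}{P_{p,\nu}(y)} \leq \frac{1}{P_{p,\nu}(y)} \leq \frac{\nu+1}{2\nu},
\]
giving the desired bound immediately.

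In the small-$y$ regime a naive bound blows up, so I would use Taylor's formula with integral remainder. Since $P_{p,\nu}(0)=0$ and $P_{p,\nu}'(0)=1$ by the definition of $\omega_{p,\nu}$ in (\ref{defpolys1}),
\[
y - P_{p,\nu}(y) = \int_0^y (y-s)\bigl(-P_{p,\nu}''(s)\bigr)\, ds
\quad\text{with}\quad
-P_{p,\nu}''(s) = \frac{2}{\omega_{p,\nu}^2}\,\frac{T_p''\!\left(\delta_{p,\nu} - s/\omega_{p,\nu}\right)}{T_p(\delta_{p,\nu})}.
\]
For $s \in [0,y]$ with $y \leq \nu\omega_{p,\nu}/p^2$, the argument $\delta_{p,\nu} - s/\omega_{p,\nu}$ lies in $[1,\delta_{p,\nu}]$, so combining Lemma \ref{Tpmext} with $m=2$, the lower bound (\ref{Tplower}), and $\omega_{p,\nu} \geq 2p^2\operatorname*{e}^{-\nu}$ from Lemma \ref{Lembound}(3) yields the uniform estimate $0 \leq -P_{p,\nu}''(s) \leq \operatorname*{e}^{3\nu}/(6(1+\nu))$, which integrates to
\[
0 \leq y - P_{p,\nu}(y) \leq \frac{\operatorname*{e}^{3\nu}}{12(1+\nu)}\,y^2.
\]

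For the matching lower bound on $P_{p,\nu}(y)$ I would reuse the Taylor-type estimate $P_{p,\nu}(y) \geq y\,p^2/((\nu+1)\omega_{p,\nu})$ that appears inside the proof of Lemma \ref{LemPpnue} (following from the pointwise bound $q_{p,\nu}(\,\cdot\,) \geq p^2/(\nu+1)$ on $[0,\nu/p^2]$). Combined with $\omega_{p,\nu} \leq 2p^2$ from Lemma \ref{Lembound}(3) this yields
\[
\frac{y - P_{p,\nu}(y)}{y\,P_{p,\nu}(y)} \leq \frac{\operatorname*{e}^{3\nu}\,\omega_{p,\nu}}{12\,p^2} \leq \frac{\operatorname*{e}^{3\nu}}{6}.
\]
An elementary check shows $\operatorname*{e}^{3\nu}/6 \leq (\nu+1)/(2\nu)$ for all $\nu \in (0,1/2]$: the left-hand side is increasing with maximum $\operatorname*{e}^{3/2}/6 < 3/2$, while the right-hand side is decreasing with minimum $3/2$. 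Together with the large-$y$ bound this completes the proof.

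The main obstacle is the apparent singularity at $y=0$, where $1 - y^{-1}P_{p,\nu}(y)$ and $P_{p,\nu}(y)$ both vanish; Taylor's formula turns the ratio into a bounded quantity, but only once the sharp $\operatorname*{e}^{\nu}$-type bounds for $T_p^{(m)}$ on the extended interval $[-\delta_{p,\nu},\delta_{p,\nu}]$ (Lemma \ref{Tpmext}) and two-sided control of $\omega_{p,\nu}$ (Lemma \ref{Lembound}(3)) are brought in. A secondary subtlety is keeping the small-$y$ estimate uniformly bounded as $\nu \to 0^+$, so that it remains dominated by the blow-up of $(\nu+1)/(2\nu)$.
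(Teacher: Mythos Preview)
Your proposal is correct and follows essentially the same route as the paper's proof: the same split at $y=\nu\omega_{p,\nu}/p^{2}$, the same use of (\ref{Pdeltatesta}) to discard the absolute value, the same lower bound $P_{p,\nu}(y)\geq 2\nu/(\nu+1)$ in the large-$y$ regime, and the same Taylor-based estimate in the small-$y$ regime leading to the identical constant $\operatorname*{e}^{3\nu}/6$ via the bounds on $T_{p}''$, $T_{p}(\delta_{p,\nu})$ and $\omega_{p,\nu}$ from Lemmas \ref{Tpmext} and \ref{Lembound}. The only cosmetic difference is that you use the integral form of the remainder whereas the paper uses the Lagrange form.
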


\begin{proof}
From (\ref{Pdeltatestb}) we conclude that%
\begin{equation}
\frac{P_{p,\nu}\left(  y\right)  }{y}\geq\frac{\nu}{\left(  2+\nu\right)
^{2}p^{2}} > 0 \quad\forall\, 0 < y\leq\left(  2+\frac{\nu}{p^{2}}\right)
\omega_{p,\nu}, 
\label{Ppnuelow3}%
\end{equation}
so that%
\[
\frac{1-y^{-1}P_{p,\nu}\left(  y\right)  }{P_{p,\nu}\left(  y\right)  }%
\leq\frac{1}{P_{p,\nu}\left(  y\right)  }.
\]
On the other hand, the estimate (\ref{Pdeltatesta}) implies%
\[
1-y^{-1}P_{p,\nu}\left(  y\right)  \geq 0
\]
and we arrive at%
\begin{equation}
\left\vert \frac{1-y^{-1}P_{p,\nu}\left(  y\right)  }{P_{p,\nu}\left(
y\right)  }\right\vert \leq\frac{1}{P_{p,\nu}\left(  y\right)  }\quad
\forall\, 0 < y\leq\left(  2+\frac{\nu}{p^{2}}\right)  \omega_{p,\nu}.
\label{estfinaltech1}%
\end{equation}
For $\nu/p^{2}\leq x/\omega_{p\not , \nu}<2+\nu/p^{2}$, we conclude from
(\ref{Tplower}) that%
\begin{equation}
P_{p,\nu}\left(  x\right)  =2\left(  1-\frac{T_{p}\left(  \delta_{p,\nu}%
-\frac{x}{\omega_{p,\nu}}\right)  }{T_{p}\left(  \delta_{p,\nu}\right)
}\right)  \geq2\left(  1-\frac{1}{1+\nu}\right)  =\frac{2\nu}{\nu+1}.
\label{estPpnuelow2}%
\end{equation}
We combine (\ref{estfinaltech1}) with (\ref{estPpnuelow2}) to obtain%
\[
\left\vert \frac{1-y^{-1}P_{p,\nu}\left(  y\right)  }{P_{p,\nu}\left(
y\right)  }\right\vert \leq\frac{\nu+1}{2\nu}.
\]
For $0<y/\omega_{p,\nu}<\nu/p^{2}$, a Taylor argument leads to%
\[
\frac{P_{p,\nu}\left(  y\right)  }{y}=1-\frac{y}{\omega_{p,\nu}^{2}}%
\frac{T_{p}^{\prime\prime}\left(  \delta_{p,\nu}-\xi\right)  }{T_{p}\left(
\delta_{p,\nu}\right)  }%
\]
for some $\xi\in\left(  0,\frac{y}{\omega_{p,\nu}}\right)  $. The combination
of (\ref{Tpmmax}), (\ref{Tplower}), and (\ref{estomegafb}) leads to%
\begin{align*}
\left\vert \frac{1-y^{-1}P_{p,\nu}\left(  y\right)  }{P_{p,\nu}\left(y\right)  }\right\vert 
&= \frac{y}{\omega_{p,\nu}^{2}} \frac{\left\vert T_{p}^{\prime\prime}\left(  \delta_{p,\nu}-\xi\right)  \right\vert } {T_{p}\left(  \delta_{p,\nu}\right)  P_{p,\nu}\left(  y\right)} \\
&\leq \frac{1}{\omega_{p,\nu}^{2}} \frac{p^{4}\operatorname*{e}^{\nu}}{3\left(1+\nu\right)  } \frac{y}{P_{p,\nu}\left(  y\right)  }
\leq\frac{\operatorname*{e}^{3\nu}}{12\left(  1+\nu\right)  } \frac{y}{P_{p,\nu}\left(  y\right)  }.
\end{align*}
From (\ref{infproof1}) and (\ref{infproof2}) one derives%
\[
\frac{P_{p,\nu}\left(  y\right)  }{y}\geq\frac{1}{\omega_{p,\nu}}\frac{p^{2}%
}{\nu+1}\overset{\text{(\ref{estomegafb})}}{\geq}\frac{1}{2\left(
\nu+1\right)  }.
\]
Thus,%
\[
\left\vert \frac{1-y^{-1}P_{p,\nu}\left(  y\right)  }{P_{p,\nu}\left(
y\right)  }\right\vert \leq\frac{\operatorname*{e}^{3\nu}}{6}.
\]
For $0\leq\nu\leq 1/2$ it holds 
\[
\frac{\nu+1}{2\nu}\geq\frac{\operatorname*{e}%
^{3\nu}}{6}
\] 
and the assertion follows.%
\end{proof}

\section{Equivalence of the algorithm}
\label{secAppB}

We prove the equivalence of definition (\ref{leap_frog_lts_fd}) and the algorithm in section \ref{sec:stablflts} by using the following recursion for $P_{p,\nu}^{\Delta t}$, which is also noticed in \cite{CarHocStu19}.
\begin{lemma}
\label{LemRecurPpnue}
Let the polynomials $P_{p,\nu,k}(x) \in \mathbb{P}_k$, $k = 0,1,\ldots,p$ be defined by
\begin{equation}
\left\{
\begin{aligned}
P_{p,\nu,0}^{\Delta t}(x) &:= 0, \\
P_{p,\nu,1}^{\Delta t}(x) &:= \dfrac{2}{\omega_{p,\nu} \delta_{p,\nu}}, \\
P_{p,\nu,k+1}^{\Delta t}(x) &:= 2 \beta_{k+1/2} \left( \delta_{p,\nu} - \dfrac{\Delta t^2 x}{\omega_{p,\nu}} \right) P_{p,\nu,k}^{\Delta t}(x) - \beta_k P_{p,\nu,k-1}^{\Delta t}(x)\\
&\qquad  + \dfrac{4}{\omega_{p,\nu}} \beta_{k+1/2},
\end{aligned}
\right.
\label{RecurPDtpnue}
\end{equation}
with coefficients $\beta_k$ and $\beta_{k+1/2}$ defined as in (\ref{Def_beta}).
Then 
\[
P_{p,\nu}^{\Delta t}(x) = P_{p,\nu,p}^{\Delta t}(x).
\]
\end{lemma}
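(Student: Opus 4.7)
The plan is to identify $P_{p,\nu,k}^{\Delta t}$ with a natural rescaling of the Chebyshev recurrence and then verify the three claims (the two base cases and the recursion). Set
\[
y := \delta_{p,\nu} - \frac{\Delta t^{2} x}{\omega_{p,\nu}}, \qquad
\widetilde{P}_{k}(y) := 2\left( 1 - \frac{T_{k}(y)}{T_{k}(\delta_{p,\nu})} \right),
\]
so that $P_{p,\nu}(\Delta t^{2}x) = \widetilde{P}_{p}(y)$ by definition \eqref{defpolys1}. I will define the candidate polynomials
\[
\widehat{P}_{p,\nu,k}^{\Delta t}(x) := \frac{\widetilde{P}_{k}(y)}{\Delta t^{2} x}, \qquad k = 0,1,\dots,p,
\]
and show that they satisfy the recurrence \eqref{RecurPDtpnue}. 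Since $\widehat{P}_{p,\nu,p}^{\Delta t}(x) = P_{p,\nu}(\Delta t^{2}x)/(\Delta t^{2}x) = P_{p,\nu}^{\Delta t}(x)$, uniqueness of the three-term recurrence given two starting values then yields the claim.

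The base cases are a direct calculation: $\widetilde{P}_{0}(y) = 0$ gives $\widehat{P}_{p,\nu,0}^{\Delta t} = 0$, while $T_{1}(y) = y$ and $T_{1}(\delta_{p,\nu}) = \delta_{p,\nu}$ give
\[
\widehat{P}_{p,\nu,1}^{\Delta t}(x) = \frac{2}{\Delta t^{2} x}\left( 1 - \frac{\delta_{p,\nu} - \Delta t^{2}x/\omega_{p,\nu}}{\delta_{p,\nu}} \right) = \frac{2}{\omega_{p,\nu}\delta_{p,\nu}},
\]
matching \eqref{RecurPDtpnue}. For the recurrence itself, I would start from the classical Chebyshev identity $T_{k+1}(y) = 2y T_{k}(y) - T_{k-1}(y)$, divide through by $T_{k+1}(\delta_{p,\nu})$, and express the resulting coefficients using the notations $\beta_{k}$ and $\beta_{k+1/2}$ from \eqref{Def_beta}:
\[
\frac{T_{k+1}(y)}{T_{k+1}(\delta_{p,\nu})} = 2 y\, \beta_{k+1/2}\, \frac{T_{k}(y)}{T_{k}(\delta_{p,\nu})} - \beta_{k}\, \frac{T_{k-1}(y)}{T_{k-1}(\delta_{p,\nu})}.
\]
Substituting $T_{j}(y)/T_{j}(\delta_{p,\nu}) = 1 - \widetilde{P}_{j}(y)/2$ and simplifying produces a three-term relation for $\widetilde{P}_{k}$ with a constant remainder $2(1 + \beta_{k} - 2 y \beta_{k+1/2})$.

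The only delicate point is handling this constant remainder, which is where I expect the main bookkeeping step to lie. Evaluating the Chebyshev recurrence at the single point $y = \delta_{p,\nu}$ gives the useful identity $1 + \beta_{k} = 2 \delta_{p,\nu}\, \beta_{k+1/2}$, so the remainder collapses to
\[
2\bigl(1 + \beta_{k} - 2 y \beta_{k+1/2}\bigr) = \frac{4 \Delta t^{2} x\, \beta_{k+1/2}}{\omega_{p,\nu}}.
\]
Dividing the resulting identity by $\Delta t^{2} x$ yields
\[
\widehat{P}_{p,\nu,k+1}^{\Delta t}(x) = 2 \beta_{k+1/2}\!\left( \delta_{p,\nu} - \frac{\Delta t^{2} x}{\omega_{p,\nu}} \right) \widehat{P}_{p,\nu,k}^{\Delta t}(x) - \beta_{k}\, \widehat{P}_{p,\nu,k-1}^{\Delta t}(x) + \frac{4\beta_{k+1/2}}{\omega_{p,\nu}},
\]
which is exactly \eqref{RecurPDtpnue}. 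Since the sequences $(\widehat{P}_{p,\nu,k}^{\Delta t})$ and $(P_{p,\nu,k}^{\Delta t})$ share the same two starting values and the same recurrence, they coincide, giving $P_{p,\nu,p}^{\Delta t}(x) = P_{p,\nu}^{\Delta t}(x)$.
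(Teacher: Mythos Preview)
Your argument is correct and is precisely the ``straightforward computation using the standard recursion for Chebyshev polynomials'' that the paper invokes as its entire proof. You have simply spelled out the details: introduce the auxiliary quantities $\widetilde P_k$, verify the two starting values, apply the Chebyshev three-term recurrence together with the identity $1+\beta_k = 2\delta_{p,\nu}\beta_{k+1/2}$ (which the paper also uses, in the proof of Theorem~\ref{TheoEquivLTSAlgo}), and conclude by uniqueness.
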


\begin{proof}
The result follows from straightforward computation using the standard recursion for Chebyshev polynomials.
\end{proof}

\begin{theorem}
Definition (\ref{leap_frog_lts_fd}) and the Stabilized LF-LTS Galerkin FE algorithm in section \ref{sec:stablflts} are equivalent.
\label{TheoEquivLTSAlgo}
\end{theorem}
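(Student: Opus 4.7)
The plan is to reduce both formulations to an identity involving the polynomial operator
$A^{S,p,\nu} = A^{S} P_{p,\nu}^{\Delta t}(\Pi_{\operatorname*{f}}^{S} A^{S})$
and then match them via the auxiliary polynomials $P_{p,\nu,k}^{\Delta t}$ of Lemma \ref{LemRecurPpnue}. First, since $(A^{S,p,\nu}u,v)_{\mathcal{T}} = a^{p,\nu}(u,v)$ by (\ref{defapv})--(\ref{defASp}), the defining equation (\ref{leap_frog_lts_fd}) is equivalent to
\[
u_{S}^{(n+1)} = 2u_{S}^{(n)} - u_{S}^{(n-1)} - \Delta t^{2} A^{S,p,\nu} u_{S}^{(n)}.
\]
On the other hand, Step (4) of the algorithm yields $u_{S}^{(n+1)} = -u_{S}^{(n-1)} + 2z_{S,p}^{(n)}$. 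Thus it suffices to establish the single identity
\begin{equation}
z_{S,p}^{(n)} = u_{S}^{(n)} - \tfrac{\Delta t^{2}}{2}\,A^{S,p,\nu} u_{S}^{(n)}
= u_{S}^{(n)} - \tfrac{\Delta t^{2}}{2}\,A^{S} P_{p,\nu}^{\Delta t}(\Pi_{\operatorname*{f}}^{S} A^{S}) u_{S}^{(n)}. \label{goalidentity}
\end{equation}

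To this end, introduce the \emph{candidate} iterates
$\tilde z_{S,k}^{(n)} := u_{S}^{(n)} - \tfrac{\Delta t^{2}}{2}\,A^{S} P_{p,\nu,k}^{\Delta t}(\Pi_{\operatorname*{f}}^{S} A^{S}) u_{S}^{(n)}$
for $k=0,1,\dots,p$, with $P_{p,\nu,k}^{\Delta t}$ as in (\ref{RecurPDtpnue}). By Lemma \ref{LemRecurPpnue}, $P_{p,\nu,p}^{\Delta t} = P_{p,\nu}^{\Delta t}$, so $\tilde z_{S,p}^{(n)}$ coincides with the right-hand side of (\ref{goalidentity}). The theorem will therefore follow once we prove by induction that $z_{S,k}^{(n)} = \tilde z_{S,k}^{(n)}$ for $k=0,\dots,p$.

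For the base cases $k=0,1$, the definitions $P_{p,\nu,0}^{\Delta t}=0$ and $P_{p,\nu,1}^{\Delta t} = 2/(\omega_{p,\nu}\delta_{p,\nu})$ give $\tilde z_{S,0}^{(n)} = u_{S}^{(n)} = z_{S,0}^{(n)}$, while Steps (1)--(2) of the algorithm together with the identity $\Pi_{\operatorname*{c}}^{S} + \Pi_{\operatorname*{f}}^{S} = I$, which implies $w_{S}^{(n)} + A^{S}\Pi_{\operatorname*{f}}^{S} z_{S,0}^{(n)} = A^{S} u_{S}^{(n)}$, yield precisely $z_{S,1}^{(n)} = \tilde z_{S,1}^{(n)}$. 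For the inductive step, I would expand the algorithm's update in Step (3), substitute $z_{S,k}^{(n)} = \tilde z_{S,k}^{(n)}$, $z_{S,k-1}^{(n)} = \tilde z_{S,k-1}^{(n)}$, replace $w_{S}^{(n)} + A^{S}\Pi_{\operatorname*{f}}^{S}\tilde z_{S,k}^{(n)}$ by $A^{S} u_{S}^{(n)} - \tfrac{\Delta t^{2}}{2} A^{S} (\Pi_{\operatorname*{f}}^{S} A^{S}) P_{p,\nu,k}^{\Delta t}(\Pi_{\operatorname*{f}}^{S} A^{S}) u_{S}^{(n)}$ (using $\Pi_{\operatorname*{f}}^{S} A^{S} g(\Pi_{\operatorname*{f}}^{S} A^{S}) = (\Pi_{\operatorname*{f}}^{S} A^{S}) g(\Pi_{\operatorname*{f}}^{S} A^{S})$), and collect terms. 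Matching coefficients against (\ref{RecurPDtpnue}) then produces the scalar identity
\[
1 + \beta_{k} = 2\,\delta_{p,\nu}\,\beta_{k+1/2},
\]
which is precisely the Chebyshev three-term recurrence $T_{k+1}(\delta_{p,\nu}) = 2\delta_{p,\nu} T_{k}(\delta_{p,\nu}) - T_{k-1}(\delta_{p,\nu})$ divided by $T_{k+1}(\delta_{p,\nu})$; substituting (\ref{Def_beta}) verifies it instantly.

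The only real obstacle is purely bookkeeping: one must carefully track the cross terms generated by $A^{S} = A^{S}\Pi_{\operatorname*{c}}^{S} + A^{S}\Pi_{\operatorname*{f}}^{S}$ and the fact that $w_{S}^{(n)}$ depends on $u_{S}^{(n)}$ (\emph{not} on $z_{S,k}^{(n)}$), while the $A^{S}\Pi_{\operatorname*{f}}^{S}$ contribution is evaluated at the current iterate $z_{S,k}^{(n)}$. This asymmetry is exactly what encodes the ``coarse part treated once per global step, fine part treated at every local step'' structure of the method, and aligns with the recursion (\ref{RecurPDtpnue}) only after the Chebyshev identity above is invoked. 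Once this matching is done, induction closes the argument.
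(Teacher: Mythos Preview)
Your proposal is correct and follows essentially the same approach as the paper's own proof: both reduce the equivalence to the identity $z_{S,k}^{(n)} = u_{S}^{(n)} - \tfrac{\Delta t^{2}}{2} A^{S} P_{p,\nu,k}^{\Delta t}(\Pi_{\operatorname*{f}}^{S} A^{S}) u_{S}^{(n)}$, prove it by induction on $k$, and close the inductive step via the Chebyshev identity $1+\beta_{k} = 2\delta_{p,\nu}\beta_{k+1/2}$.
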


\begin{proof}
Let $z_{S,k}^{(n)}$ be defined as in the algorithm. In the following, we want to show that
\begin{equation}
z_{S,k}^{(n)} = 
u_{S}^{(n)} - \dfrac{\Delta t^2}{2} A^{S} P_{p,\nu,k}^{\Delta t} \left( \Pi_{\operatorname*{f}}^{S} A^{S} \right) u_{S}^{(n)}
\label{zSk_explicit}
\end{equation}
for every $k = 0,1,\ldots,p$. 

For $k =0,1$, this follows simply by inserting (\ref{RecurPDtpnue}). Hence, we may deduce by induction, 
\begin{align*}
z_{S,k+1}^{(n)} &= \left( 1 + \beta_{k} \right) z_{S,k}^{(n)} - \beta_{k} z_{S,k-1}^{(n)} \\
&\quad - \left( \dfrac{\Delta t}{p} \right)^2 \left( \dfrac{2 p^2}{\omega_{p,\nu}} \right) \beta_{k+ 1/2} 
\left( A^{S} \Pi_{\operatorname*{c}}^{S} u_{S}^{(n)} + A^{S} \Pi_{\operatorname*{f}}^{S} z_{S,k}^{(n)}  \right)& \\
&= \left( 1 + \beta_{k} \right) u_{S}^{(n)} - \left( 1 + \beta_{k} \right) \dfrac{\Delta t^2}{2} A^{S} P_{p,\nu,k}^{\Delta t} \left( \Pi_{\operatorname*{f}}^{S} A^{S} \right) u_{S}^{(n)} - \beta_{k}  u_{S}^{(n)} \\
&\quad + \beta_{k} \dfrac{\Delta t^2}{2} A^{S} P_{p,\nu,k-1}^{\Delta t} \left( \Pi_{\operatorname*{f}}^{S} A^{S} \right) u_{S}^{(n)} - \left( \dfrac{\Delta t}{p} \right)^2 \left( \dfrac{2 p^2}{\omega_{p,\nu}} \right) \beta_{k+ 1/2} A^{S} u_{S}^{(n)} \\
&\quad + \dfrac{\Delta t^2}{2} \left( \dfrac{\Delta t}{p} \right)^2 \left( \dfrac{2 p^2}{\omega_{p,\nu}} \right) \beta_{k+ 1/2} A^{S} \Pi_{\operatorname*{f}}^{S} A^{S} P_{p,\nu,k}^{\Delta t} \left( \Pi_{\operatorname*{f}}^{S} A^{S} \right) u_{S}^{(n)}.
\end{align*}
Now observe that from the well-known recursion for Chebyshev polynomials of first kind, $T_{k+1}(x) = 2 x T_k(x) - T_{k-1}(x)$, it follows that
\[
1 + \beta_{k} = 2 \delta_{p,\nu} \beta_{k+1/2}.
\]
From this, we conclude
\begin{align*}
z_{S,k+1}^{(n)} &= u_{S}^{(n)} - \dfrac{\Delta t^2}{2} A^{S} \left( 2 \left( \delta_{p,\nu} I^S - \dfrac{\Delta t^2}{\omega_{p,\nu}} \Pi_{\operatorname*{f}}^{S} A^{S}  \right) \beta_{k+1/2} P_{p,\nu,k}^{\Delta t} \left( \Pi_{\operatorname*{f}}^{S} A^{S} \right) \right. \\
&\qquad\qquad\qquad \left. - \beta_{k} P_{p,\nu,k-1}^{\Delta t} \left( \Pi_{\operatorname*{f}}^{S} A^{S} \right) + \dfrac{4}{\omega_{p,\nu}} \beta_{k+1/2} I^{S} \right) u_{S}^{(n)} \\
&\overset{\text{(\ref{RecurPDtpnue})}}{=} u_{S}^{(n)} - \dfrac{\Delta t^2}{2} A^{S} P_{p,\nu,k+1}^{\Delta t} \left( \Pi_{\operatorname*{f}}^{S} A^{S} \right) u_{S}^{(n)}.
\end{align*}
In summary, we have proven that
\begin{align*}
u_{S}^{(n+1)} &= - u_{S}^{(n-1)} + 2 \left(u_{S}^{(n)} - \dfrac{\Delta t^2}{2} A^{S} P_{p,\nu}^{\Delta t} \left( \Pi_{\operatorname*{f}}^{S} A^{S} \right) u_{S}^{(n)}\right),
\end{align*}
which implies the assertion.
\end{proof}

\section*{Acknowledgements} 
We thank Constantin Carle and Marlis Hochbruck for fruitful discussions, in particular, about the proof of Lemma \ref{Tpmext}.

This work was supported by the Swiss National Science Foundation under grant SNF 200020-188583. 

\bibliographystyle{amsplain}

\end{document}